\newtheorem{theorem}[subsection]{Theorem}
\newtheorem{proposition}[subsection]{Proposition}
\newtheorem{conjecture}[subsection]{Conjecture}
\theoremstyle{definition}
\newtheorem{remark}[subsection]{Remark}
\numberwithin{equation}{subsection}
\newcommand{\sO}{\mathscr O}
\newcommand{\sF}{\mathscr F}
\newcommand{\sK}{\mathscr K}
\newcommand{\sG}{\mathscr G}
\newcommand{\fm}{\mathfrak m}
\newcommand{\bQ}{\mathbb Q}
\newcommand{\bT}{\mathbb T}
\newcommand{\bA}{\mathbb A}
\newcommand{\cB}{\mathcal B}
\newcommand{\rR}{\mathrm R}
\newcommand{\Spec}{\mathrm{Spec}}
\newcommand{\sw}{\mathrm{sw}}
\newcommand{\SW}{\mathrm{SW}}
\newcommand{\DT}{\mathrm{DT}}
\newcommand{\GDT}{\mathrm{GDT}}
\newcommand{\LDT}{\mathrm{LDT}}
\newcommand{\GLDT}{\mathrm{GLDT}}
\newcommand{\dimtot}{\mathrm{dimtot}}
\newcommand{\rk}{\mathrm{rk}}
\newcommand{\Gr}{\mathrm{Gr}}
\newcommand{\Hom}{\mathrm{Hom}}
\newcommand{\wt}{\widetilde}
\newcommand{\ub}{\underline\beta}
\begin{document}

\title{Logarithmic ramifications of \'etale sheaves by restricting to curves}

\author{Haoyu Hu}

\address{Graduate School of Mathematical Sciences, the University of Tokyo, 3-8-1 Komaba Meguro-ku Tokyo 153-8914, Japan}

\email{huhaoyu1987@163.com, huhaoyu@ms.u-tokyo.ac.jp}

\subjclass[2000]{Primary 14F20; Secondary 11S15}

\keywords{Logarithmic ramification, Swan divisors, semi-continuity}

\begin{abstract}
In this article, we prove that the Swan conductor of an \'etale sheaf  on a smooth variety defined by Abbes and Saito's logarithmic ramification theory can be computed by its classical Swan conductors after restricting it to curves.  It extends the main result of \cite{barr} for rank $1$ sheaves. As an application, we give a logarithmic ramification version of generalizations of
Deligne and Laumon's lower semi-continuity property for Swan conductors of \'etale sheaves on relative curves to higher relative dimensions in a geometric situation.
\end{abstract}

\maketitle
\tableofcontents

\section{Introduction}
\subsection{}
In 1960s, the well-known Grothendieck-Ogg-Shafarevich formula was established to compute the Euler-Poincar\'e characteristic of an $\ell$-adic sheaf on a projective and smooth curve over an algebraically closed field of characteristic $p>0$ ($p\neq \ell$) (\cite{sga5}). The ramification theory of local fields contributes to this formula through an invariant called the {\it Swan conductor} (\cite{lr}). It measures the wild ramification of representations of the absolute Galois group of a local field.

In higher dimensions, the ramification phenomena involving imperfect residue extensions has been for a long time an obstacle to study ramifications of $\ell$-adic sheaves along divisors and to generalize the Grothendieck-Ogg-Shafarevich formula.  In 1970s, Deligne started a program to measure the ramification of an $\ell$-adic sheaf by restricting the sheaf to smooth curves and to generalize Grothendieck-Ogg-Shafarevich formula to higher dimensions. It was developed in \cite{lau2}. On the other hand, using $K$-theory, Kato initiated a study on the ramification of local fields with imperfect residue fields (\cite{kato1,kato2,kato3}). Later, Abbes and Saito introduced a more geometric approach that enables to study the ramification of an $\ell$-adic sheaf along a divisor (\cite{as1,as2,rc, saito cc, as3, wr}). Meanwhile, Kedlaya and Xiao established the ramification theory for $p$-adic differential modules, which is a parallel of Abbes and Saito's theory in the $p$-adic world (\cite{ked1,ked2,xl1,xl2}).  Naturally, we may ask whether there exists  a connection between Deligne's program and Abbes and Saito's approach.

\subsection{}\label{asramintro}
Let $K$ be a discrete valuation field,  $\sO_K$ its integer ring, $F$ the residue field of $\sO_K$, $\overline K$ a separable closure of $K$, $G_K$ the Galois group of $\overline K$ over $K$. We assume that the characteristic of $F$ is $p>0$ and that $\sO_K$ is henselian.
Abbes and Saito defined two decreasing filtrations $G^r_K$ and $G^r_{K,\log}$ ($r\in \bQ_{\geq 0}$) of $G_K$ by closed normal subgroups called the {\it ramification filtration} and the {\it logarithmic ramification filtration}, respectively (\cite[3.1,~3.2]{as1}). Let $M$ be a finitely generated $\Lambda$-module on which the wild inertia subgroup of $G_K$ acts, where $\Lambda$ denotes a finite field of characteristic $\ell\neq p$. Let $M=\bigoplus_{r\in \mathbb Q_{\geq 0}}M^{(r)}_{\log}$ be the logarithmic slope decomposition of $M$ and we define by
\begin{equation*}
\sw_KM=\sum_{r\in \mathbb Q_{\geq 0}}r\cdot\dim_{\Lambda}M^{(r)}_{\log}
\end{equation*}
the Swan conductor of $M$ (\ref{logmodsection}). When $F$ is a perfect field, we see that $G_K^{r+1}=G^{r}_{K,\log}$, that $G^{r}_{K,\log}$ coincides with its classical upper numbering filtration of $G_K$ and that $\sw_KM$ here coincides with the classical Swan conductor of $M$ (\cite[19.3]{lr}).

\subsection{}\label{swdivintro}
In the following of the introduction, let $\kappa$ be a perfect field of characteristic $p>0$. Let $Y$ be a smooth $\kappa$-scheme of finite type, $E$ a reduced Cartier divisor on $Y$, $V$ the complement of $E$ in $Y$, $g:V\rightarrow Y$ the canonical injection, $\{E_i\}_{i\in I}$ the set of irreducible components of $E$, $\xi_i$ the generic point of $E_i$, $Y_{(\xi_i)}$ the henselization of $Y$ at $\xi_i$, $\eta_i$ the generic point of $Y_{(\xi_i)}$, $K_i$ the fraction field of $Y_{(\xi_i)}$ and $\overline K_i$ a separable closure of $K_i$. Let $\sG$ be a locally constant and constructible \'etale sheaf of $\Lambda$-modules on $V$. For each $i\in I$, the restriction $\sG|_{\eta_i}$ corresponds to a finitely generated $\Lambda$-module with a continuous $\mathrm{Gal}(\overline K_i/K_i)$-action. We define the {\it Swan divisor} of $g_!\mathscr G$ on $Y$ and denote by $\SW_Y(g_!\sG)$ the Cartier divisor:
\begin{equation*}
\mathrm{SW}_Y(g_!\mathscr G)=\sum_{i\in I} \mathrm{sw}_{K_{i}} (\mathscr G|_{\eta_{i}})\cdot E_i.
\end{equation*}
We also define the {\it logarithmic total dimension divisor} of $g_!\mathscr G$ on $Y$ and denote by $\LDT_Y(g_!\sG)$ the Cartier divisor:
\begin{equation*}
\mathrm{LDT}_Y(g_!\mathscr G)=\sum_{i\in Y} \left(\mathrm{sw}_{K_{i}} (\mathscr G|_{\eta_{i}})+\dim_{\Lambda}\sG\right)\cdot E_i.
\end{equation*}

\subsection{}\label{ramdeligne}
Let $X$ be a {\it normal} $\kappa$-scheme of finite type, $D$ an effective Cartier divisor on $X$ such that the complement $U=X-D$ is smooth over $\Spec(\kappa)$, and $j:U\rightarrow X$ the canonical injection. We denote by $\mathcal C(X)$ the set of canonical morphisms $g:\widetilde C\rightarrow X$, where $\widetilde C$ is the normalization of a $1$-dimensional integral closed subscheme $C$ of $X$ such that $C\times_XD$ is an effective Cartier divisor on $C$. Let $\sF$ be an locally constant and constructible sheaf of $\Lambda$-modules on $U$. We say that  the ramification of $\sF$  along $D$ is bounded by a Cartier divisor $R$ (supported on $D$) {\it in the sense of Deligne} if, for any $g:\widetilde C\rightarrow X$ in $\mathcal C(X)$, we have (\cite[Definition 3.6]{ek})
\begin{equation*}
g^*R\geq \SW_{\widetilde C}(g^*(j_!\sF)).
\end{equation*}
Esnault and Kerz proved that each locally constant and constructible sheaf of $\Lambda$-modules on $U$ is bounded by a Cartier divisor in the sense of Deligne (\cite[Proposition 3.9]{ek}). They further predict that Abbes and Saito's logarithmic ramification theory gives the sharp lower bound:

\begin{conjecture}[{Esnault and Kerz, \cite{ek} and \cite[Conjecture A]{barr}}]\label{ekconja}
We take the notation and assumptions of \ref{ramdeligne}. Then, the following two conditions are equivalent:
\begin{itemize}
\item[1.] The ramification of $\sF$ along $D$ is bounded  by a Cartier divisor $R$ in the sense of Deligne,
\item[2.]
For any birational morphism $f:X'\rightarrow X$ such that $X'$ is smooth over $\Spec(\kappa)$, that $(X'\times_XD)_{\mathrm{red}}$ is a divisor on $X'$ with simple normal crossing and that $f^{-1}(U)=U$, we have (\ref{bigger})
\begin{equation*}
f^*R\geq \SW_{X'}(f^*(j_!\sF)).
\end{equation*}
\end{itemize}
\end{conjecture}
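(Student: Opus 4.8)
\emph{Plan.} The plan is to deduce the conjecture from the main result of this paper --- that on a smooth $\kappa$-variety $Y$ with simple normal crossings boundary the Abbes--Saito logarithmic Swan conductor $\sw_{K_i}(\sG|_{\eta_i})$ along a boundary component is computed from the classical Swan conductors of the restrictions of $\sG$ to curves --- together with resolution of singularities for $X$. In this reduction the implication $(2)\Rightarrow(1)$ is the formal one, using resolution of $X$ and the ``restriction to a curve does not increase ramification'' half of the main result, while $(1)\Rightarrow(2)$ is the substantive direction and rests on the ``a general transverse curve attains the conductor'' half.

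\emph{The implication $(2)\Rightarrow(1)$.} Given $g\colon\widetilde C\to X$ in $\mathcal C(X)$, I would first choose, by resolving the singularities of $X$ and then blowing up inside the boundary, a birational morphism $f\colon X'\to X$ with $X'$ smooth over $\kappa$, with $(X'\times_XD)_{\mathrm{red}}$ a simple normal crossings divisor, and with $f^{-1}(U)=U$. Since $\widetilde C$ is a regular curve whose generic point lies over $U$, the valuative criterion of properness provides a unique lift $g'\colon\widetilde C\to X'$ with $g=f\circ g'$. By hypothesis $(2)$ the divisor $f^*R-\SW_{X'}(f^*(j_!\sF))$ is effective and supported in the boundary, so pulling it back along $g'$, which does not carry $\widetilde C$ into the boundary, gives
\begin{equation*}
g^*R=(g')^*f^*R\geq(g')^*\SW_{X'}(f^*(j_!\sF)).
\end{equation*}
It then remains to compare the right-hand side with $\SW_{\widetilde C}(g^*(j_!\sF))$: at a closed point $c$ of $\widetilde C$ mapping into the components $D'_i$ of the boundary, the first has coefficient $\sum_i m_i(c)\cdot\sw_{K'_i}(\sF|_{\eta'_i})$, where $m_i(c)$ is the intersection multiplicity of $\widetilde C$ with $D'_i$ at $c$, and the required inequality $\sum_i m_i(c)\cdot\sw_{K'_i}(\sF|_{\eta'_i})\geq\sw_c(g^*\sF)$ is precisely the ``restriction does not increase the logarithmic Swan conductor'' part of the main result (the higher-rank analogue of the corresponding estimate of \cite{barr}), which I would obtain from the functoriality of the Abbes--Saito logarithmic ramification filtration.

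\emph{The implication $(1)\Rightarrow(2)$.} Now fix $f\colon X'\to X$ as in $(2)$ and a boundary component $D'_i$ with generic point $\xi'_i$. By the main result of this paper the coefficient $\sw_{K'_i}(f^*(j_!\sF)|_{\eta'_i})$ of $\SW_{X'}(f^*(j_!\sF))$ along $D'_i$ equals the supremum, over curves $h\colon C\to X'$ meeting $D'_i$ at a sufficiently general closed point $x$ (away from a proper closed subset of $D'_i$), of $\frac{1}{m}\sw_x(h^*\sF)$, where $m$ is the multiplicity of $C$ along $D'_i$ at $x$; for such a general point one may take $C$ transverse, so $m=1$. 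For each such $C$ the composite $f\circ h\colon C\to X$ lies in $\mathcal C(X)$: its image in $X$ is a $1$-dimensional integral closed subscheme not contained in $D$, and $C$ is its normalization because $f$ is an isomorphism over $U$. Hypothesis $(1)$ then gives $(f\circ h)^*R\geq\SW_{C}((f\circ h)^*(j_!\sF))$, and comparing coefficients at $x$ --- where $C$ meets only $D'_i$ --- shows that the coefficient of $f^*R$ along $D'_i$ is at least $\frac{1}{m}\sw_x(h^*\sF)$. Taking the supremum over all admissible $C$, and then over all components of the boundary, yields $f^*R\geq\SW_{X'}(f^*(j_!\sF))$, which is condition $(2)$.

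\emph{Main obstacle.} I expect the genuine difficulties to lie in the two inputs. The first is resolution of singularities for $X$, used in $(2)\Rightarrow(1)$: it is unavailable in characteristic $p$ beyond dimension three, so a full proof must either be restricted to small dimension or assume resolution, and otherwise the first thing I would try is to replace the global model $X'$ by a sufficiently good local resolution feeding into hypothesis $(2)$. The second, and deeper, difficulty is the ``attained'' half of the main result for sheaves of arbitrary rank --- that the Abbes--Saito logarithmic Swan conductor along a boundary component carries no defect relative to what a general transverse curve detects. For rank $1$ this is the main result of \cite{barr}; in general it amounts to controlling the logarithmic slope decomposition of $\sG|_{\eta_i}$ under restriction to curves, and this, rather than the formal deductions above, is the heart of the matter.
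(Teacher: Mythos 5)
Your overall plan --- reduce $(2)\Rightarrow(1)$ to the curve inequality $g^*\SW_{X'}\geq\SW_C$ and $(1)\Rightarrow(2)$ to the statement that the supremum over curves computes the Swan conductor --- matches the paper's route through Theorem \ref{thmconja}, and you correctly identify resolution of singularities as the obstruction to the full normal-$X$ case (the paper proves only the case where $X$ is already smooth and $D$ is simple normal crossings, which lets it compare condition $2$ of the conjecture to $R\geq\SW_X(j_!\sF)$ via Theorem \ref{proofconj1} without any resolution). But there is a genuine gap in your $(1)\Rightarrow(2)$.

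You claim that "for such a general point one may take $C$ transverse, so $m=1$" and still attain the supremum. This is false in general. What a general transverse curve $C$ through a general $x\in D'_i$ detects is the total dimension, not the Swan conductor: by Proposition \ref{saitomain} one has $\dimtot_x(g^*\sF)=\dimtot_{D'_i}(f^*(j_!\sF))$, hence on the curve (residue field perfect, \eqref{swdimtotcurve}) $\sw_x(g^*\sF)=\dimtot_{D'_i}(f^*(j_!\sF))-\rk_\Lambda\sF$, which can fall short of $\sw_{D'_i}(f^*(j_!\sF))$ by anything up to $\rk_\Lambda\sF$. Equality forces the conormal $\bT^*_{D'_i}X'$ out of the singular support (Proposition \ref{propswdim=dimtot}), which is an extra hypothesis and fails for many sheaves. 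The supremum in Theorem \ref{thmconjb} is only attained in the limit $m\to\infty$: the paper's Proposition \ref{conj2strong} forms a cyclic cover of degree $\beta$ tamely ramified along $D'_i$, takes a transverse curve upstairs, and pushes it down to a curve meeting $D'_i$ with multiplicity $\beta$, obtaining $\sw_{D'_i}\geq\frac{1}{\beta}\sw_x\geq\sw_{D'_i}-\frac{1}{\beta}\rk_\Lambda\sF$. It is precisely this $\beta\to\infty$ argument, absent from your sketch, that closes the rank-$\Lambda\sF$ defect, and without it the comparison you invoke does not hold at $m=1$.

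A smaller misconception is in your $(2)\Rightarrow(1)$, where you write that the estimate $\sum_i m_i(c)\cdot\sw_{K'_i}\geq\sw_c(g^*\sF)$ follows "from the functoriality of the Abbes--Saito logarithmic ramification filtration." It does not: the functoriality statements (Propositions \ref{nonlogext} and \ref{logext}) compare a discretely valued field with its finite separable extensions, whereas restricting from the local field at the generic point of a divisor to the local field at a point of a curve is a fundamentally different operation. The paper's Proposition \ref{propimm} establishes this inequality via another $\beta\to\infty$ tame cover argument, relying on the total dimension inequality \cite[Theorem 4.2]{HY} (Proposition \ref{hymain}), whose proof itself goes through the theory of singular supports and characteristic cycles --- it is not a formal consequence of ramification-filtration functoriality.
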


\begin{conjecture}[{Esnault and Kerz, \cite{ek} and \cite[Conjecture B]{barr}}]\label{ekconjb}
We take the notation and assumptions of \ref{ramdeligne} and we assume that the effective Cartier divisor $D$ is integral. Let $\mathcal{PC}(X)$ be the set of pairs $(g:\wt C\rightarrow X,\, x\in \wt C)$ where $g:\wt C\rightarrow X$ is an element of $\mathcal C(X)$ and $x$ is a closed point of  $g^*D=\wt C\times_XD$. Then, we have
\begin{equation}\label{formulaek2}
\sup_{\mathcal{PC}(X)}\frac{\sw_x(g^*j_!\sF)}{m_x(g^*D)}=\sw_D(j_!\sF),
\end{equation}
where $m_x(g^*D)$ denotes the multiplicity of $g^*D$ at $x\in \wt C$ and $\sw_D(j_!\sF)$ the coefficient of the Swan divisor of $j_!\sF$ after replacing $X$ by a smooth and open neighborhood of the generic point of $D$~(\ref{swdivintro}).
\end{conjecture}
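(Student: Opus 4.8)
\emph{Strategy.} We prove \eqref{formulaek2} by establishing the two inequalities ``$\leq$'' and ``$\geq$'' separately, after reducing to a local problem at the generic point $\xi$ of $D$. Since by definition $\sw_D(j_!\sF)$ is computed on a smooth open neighbourhood of $\xi$, we begin by replacing $X$ by such a neighbourhood, so that $X$ is smooth over $\kappa$ and $D=V(t)$ is a smooth integral divisor. This does not change the left-hand side of \eqref{formulaek2}: a pair $(g,x)\in\mathcal{PC}(X)$ with $x$ over the chosen neighbourhood restricts to it with the same ratio, while a pair with $x$ over the complement --- a closed subset of codimension $\geq 2$, met by $\wt C$ in finitely many points --- contributes no more than the supremum over the former ones, as one checks by deforming the jet of $\wt C$ at $x$ so that it meets $D$ inside the smooth locus with the same multiplicity. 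After this reduction the data are: the excellent henselian discrete valuation ring $\sO_{X,\xi}^{\mathrm h}$, with fraction field $K$ and imperfect residue field $F=\kappa(D)$ of transcendence degree $\dim D$ over $\kappa$; and the $\Lambda[\mathrm{Gal}(\overline K/K)]$-module $M=\sF|_{\eta}$, with $\sw_D(j_!\sF)=\sw_KM$ by \ref{swdivintro}.

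\emph{The inequality ``$\leq$''.} Fix $(g:\wt C\rightarrow X,\,x)\in\mathcal{PC}(X)$ and set $m=m_x(g^*D)=v_x(g^*t)$. The henselisation $\sO_{\wt C,x}^{\mathrm h}$ is a discrete valuation ring with fraction field $K_x$ and perfect residue field $\kappa(x)$ finite over $\kappa$, and $g$ exhibits the restriction of $\sF$ near $x$ as $M|_{K_x}$. The key point is that restriction to $\wt C$ multiplies the logarithmic ramification slopes by at most $m$: after passing to a logarithmically smooth model, a point of the logarithmic tubular neighbourhood of level $r$ along $D$ pulls back --- because $g^*t$ has valuation $m$ at $x$ --- to a point of the tubular neighbourhood of level $mr$ along $x$ (\cite[3.2]{as1}), so that if $G^{r}_{K,\log}$ acts trivially on a $\Lambda[\mathrm{Gal}(\overline K/K)]$-module then $G^{mr}_{K_x,\log}$ acts trivially on its restriction. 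Applied to the graded piece $M^{(r)}_{\log}$ of the logarithmic slope decomposition $M=\bigoplus_{r\in\bQ_{\geq 0}}M^{(r)}_{\log}$, this gives $\sw_{K_x}(M^{(r)}_{\log}|_{K_x})\leq mr\cdot\dim_\Lambda M^{(r)}_{\log}$; summing over $r$ and using additivity of the Swan conductor yields $\sw_{K_x}(M|_{K_x})\leq m\cdot\sw_KM$, i.e. $\sw_x(g^*j_!\sF)\leq m_x(g^*D)\cdot\sw_D(j_!\sF)$. Equivalently, on the smooth model $X$ the divisor $\SW_X(j_!\sF)$ bounds the ramification of $\sF$ along $D$ in the sense of Deligne --- the arbitrary-rank extension of the main computation of \cite{barr}.

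\emph{The inequality ``$\geq$''.} We must produce, for every $\varepsilon>0$, a pair $(g:\wt C\rightarrow X,\,x)\in\mathcal{PC}(X)$ with $\sw_x(g^*j_!\sF)/m_x(g^*D)\geq\sw_KM-\varepsilon$; in fact we aim at equality $\sw_{K_x}(M|_{K_x})=m\cdot\sw_KM$ for a sufficiently generic curve of large multiplicity $m$. By the ``$\leq$'' estimate applied to each $M^{(r)}_{\log}$ it suffices to arrange, for every logarithmic slope $r$ of $M$, that $M^{(r)}_{\log}|_{K_x}$ remains logarithmically isoclinic of slope exactly $mr$ --- that there is ``no drop''. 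By the structure theory of the logarithmic ramification filtration (Abbes--Saito, Saito; see \cite{as2,as3,wr}), the graded group $\Gr^r_{\log}G_K$ is an $\mathbb F_p$-vector space acting on $M^{(r)}_{\log}$ through finitely many characters, each controlled by its refined Swan conductor, an element of a module built from $\Omega^1_F(\log D)$ and the graded pieces of the maximal ideal $\fm_K$. The no-drop condition then becomes a finite list of conditions of rank-one type on the tangent jet of $\wt C$ at $x$, satisfied by a generic jet exactly as in the rank-one case of \cite{barr}. Choosing $\wt C$ generic enough to meet all of them simultaneously --- when $\kappa$ is finite, after also replacing $\kappa(x)$ by a large enough finite extension and letting $m$ grow so that the jet has room --- produces the desired curves, hence ``$\geq$''.

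\emph{Main difficulty.} The crux is ``$\geq$'': for every graded piece $M^{(r)}_{\log}$ and every character through which $\Gr^r_{\log}G_K$ acts on it, one must describe the locus of ``bad'' tangent jets along which the logarithmic slope drops, and show that all these loci --- over all $r$ and all the relevant characters --- are simultaneously avoided by one curve, or by a sequence of curves approaching the supremum. This requires a usable description of the refined Swan conductors of the graded pieces, the higher-rank counterpart of the differential-form invariant of \cite{barr}, and carries most of the technical weight; the transversality bookkeeping in the ``$\leq$'' direction --- checking that the logarithmic slopes scale exactly by the multiplicity $m$ even when $\wt C$ is tangent to $D$ --- is the secondary technical point.
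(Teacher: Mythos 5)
Your two-inequality architecture is correct, but both of your proposed arguments analyze the Abbes--Saito logarithmic filtration directly, and neither closes the actual gap. The paper instead runs a single uniform reduction to the \emph{non-logarithmic} theory via tamely ramified covers, precisely because the direct logarithmic arguments you sketch are either unavailable or fail.

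For ``$\leq$'', you assert that if $G^r_{K,\log}$ acts trivially on $M$ then $G^{mr}_{K_x,\log}$ acts trivially on the restriction, appealing to the rigid-geometric construction of the filtration. This is not a consequence of Proposition~\ref{logext} (which concerns extensions $K'/K$ of a fixed valued field, whereas $K_x$ is not an extension of $K$), and no such functoriality of the Abbes--Saito tubular neighbourhoods under an immersion of a curve is available as a black box; the curve may be tangent to $D$ and the relevant dilatations do not pull back in the naive way. In fact your claim is exactly the content of Proposition~\ref{propimm}, one of the main technical steps of the paper, proved by passing to tame covers $X_{\ub}$ and invoking the non-logarithmic pull-back inequality of \cite[Theorem 4.2]{HY} (Proposition~\ref{hymain}) together with the bounds $\sw\leq\dimtot\leq\sw+\rk$, then letting the tame degrees tend to infinity. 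You label this direction ``the secondary technical point''; it is actually the hardest part, and even the non-logarithmic analogue is itself a serious theorem.

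For ``$\geq$'', you aim at a single generic curve with ``no drop'', i.e.\ $\sw_{K_x}(g^*M)=m\cdot\sw_KM$ achieved exactly. This is impossible in general. For any pair $(g,x)$ with $m=m_x(g^*D)$, combining the identity $\sw_x=\dimtot_x-\rk_\Lambda\sF$ on the curve with $\dimtot_x\leq m\cdot\dimtot_D$ (Proposition~\ref{hymain}) gives $\sw_x\leq m\cdot\dimtot_D-\rk_\Lambda\sF$. When the slope and logarithmic slope decompositions of $M$ coincide (Proposition~\ref{sw=dimtot}), one has $\dimtot_D=\sw_D$, so $\sw_x\leq m\cdot\sw_D-\rk_\Lambda\sF<m\cdot\sw_D$ strictly, for \emph{every} curve. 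The supremum is only approached, never attained, and the mechanism is quantitative rather than transversality-theoretic: the Swan conductor scales linearly in the tame degree $\beta$ while the gap $\dimtot-\sw$ stays bounded by the rank, so after applying Saito's equality for total dimension divisors under transversality (Proposition~\ref{saitomain}) on the tame cover $X_\beta$ one gets $\sw_{x'}/\beta\in[\sw_D-\rk_\Lambda\sF/\beta,\ \sw_D]$; this is precisely Proposition~\ref{conj2strong}. Your refined-Swan-conductor bookkeeping, even if carried out, only addresses non-drop of individual characters for a fixed curve and would not produce the required family of curves with $\sw_x/m\to\sw_D$, nor does it explain why increasing the multiplicity helps.
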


The main result of this article is the following:
\begin{theorem}[{cf. Theorem \ref{thmconjb} and Theorem \ref{thmconja}}]\label{mainresultintro}
Conjecture \ref{ekconja} holds under the condition that $X$ is smooth over $\Spec(\kappa)$ and $D$ is a divisor with simple normal crossing.  Conjecture \ref{ekconjb} holds under the condition that $X$ is smooth over $\Spec(\kappa)$ and $D$ is irreducible and smooth over $\Spec(\kappa)$.
\end{theorem}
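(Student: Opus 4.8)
The plan is to deduce both assertions from the case of Conjecture \ref{ekconjb} in which $X$ is smooth and $D$ is smooth and irreducible, and to prove that case by comparing Abbes and Saito's characteristic forms of the isoclinic pieces of $\sF$ along $D$ with the tangent directions of curves through the boundary; the rank one case is Barrientos' theorem \cite{barr}, and the new point is to handle the several characteristic forms attached to a higher rank isoclinic module and the wildly ramified covering that produces them. For the local input I localize at the generic point $\xi$ of $D$, obtaining a henselian discrete valuation field $K$ whose residue field is the function field of $D$ together with the $G_K$-module $M=\sF|_{\eta}$ on the generic point $\eta$ of the henselization. The logarithmic slope decomposition of $M$ reduces the computation of $\sw_D(j_!\sF)$ to its isoclinic pieces, and to an isoclinic $M$ of logarithmic slope $r>0$ Abbes and Saito attach, after a finite covering $X_1\to X$ ramified along $D$ that splits the wild part of $M$ into characters, a finite family of characteristic forms $\omega_1,\dots,\omega_k$, each nonzero at the generic point of $D_1=(X_1\times_XD)_{\mathrm{red}}$ and viewed as a rational section of a twist $\Omega^1_{X_1}(\log D_1)(r'D_1)$ of the logarithmic cotangent sheaf, where the twist records the ramification of $X_1/X$. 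After spreading out, all of these data are defined over an honest smooth open neighbourhood of $\xi$ in $X$.

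For the inequality $\le$ in \eqref{formulaek2}, given $(g:\widetilde C\to X,\,x)\in\mathcal{PC}(X)$, the normalized logarithmic Swan conductor $\sw_x(g^*j_!\sF)/m_x(g^*D)$ cannot exceed $\sw_D(j_!\sF)$: restriction to a curve meeting $D$ with multiplicity $m_x(g^*D)$ can only multiply each logarithmic slope of $M$ by at most that multiplicity, so the claim follows by summing over the slopes. This is the functoriality of Abbes and Saito's logarithmic ramification filtration — equivalently, for the rank one characters over $X_1$, of Kato's refined Swan conductor — under the base change defined by the curve, applied if necessary after replacing $X$ by a modification along which $\sF$ is clean; it refines Esnault and Kerz's boundedness statement to the explicit divisor $\SW_X(j_!\sF)$.

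For the inequality $\ge$ — the heart of the matter — I would exhibit one curve attaining the bound, with $m_x(g^*D)=1$. Over $X_1$, at a general point $y$ of $D_1$, the locus of tangent directions $t\in\mathbb P(T_yX_1)$ annihilated by some $\omega_j(y)$ is a proper closed subset, so a general direction is annihilated by none of them; by a moving lemma for curves on a smooth quasi-projective variety there is a smooth curve $C_1\subset X_1$ through $y$, transverse to $D_1$, not contained in it, with tangent direction $\langle t\rangle$. Its image $C$ is an element of $\mathcal C(X)$ meeting $D$ transversally at the image $x$ of $y$, and the complementary statement to the previous paragraph — a curve transverse to every $\omega_j$ forces each character of the slope-$r$ part of $M$ to restrict with logarithmic slope exactly $r$ and with no loss of rank — yields $\sw_x(g^*j_!\sF)=\sw_D(j_!\sF)$, hence ratio $\sw_D(j_!\sF)$. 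Since a general curve is simultaneously transverse to the finitely many characteristic forms of all the isoclinic pieces, this is run over all positive slopes at once, which proves Conjecture \ref{ekconjb}.

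Conjecture \ref{ekconja} for $X$ smooth and $D$ simple normal crossing then follows. For $(1)\Rightarrow(2)$, given a birational $f:X'\to X$ as in the statement, I apply the smooth-divisor case near the generic point $\xi'_i$ of each component $E'_i$ of $D'$, push the optimal curves down along $f$ — they lie in $\mathcal C(X)$ since they are not contained in $f^{-1}(D)$ — and read $\mathrm{mult}_{E'_i}(f^*R)\ge\sw_{\xi'_i}(\sF)$ off condition (1). For $(2)\Rightarrow(1)$, given $g:\widetilde C\to X$ in $\mathcal C(X)$, I resolve so that $g$ factors as $f\circ g'$ with $g':\widetilde C\to X'$; writing $j':U\to X'$ we get $\SW_{\widetilde C}(g^*j_!\sF)\le g'^{*}\SW_{X'}(j'_!\sF)\le g'^{*}f^{*}R=g^{*}R$, where the first inequality is the multivariable form of the upper bound for a curve meeting several components of a simple normal crossing divisor, proved by the same local analysis applied to the henselian discrete valuation ring of $\widetilde C$. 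I expect the main obstacle to be the inequality $\ge$ above: after the wildly ramified, possibly singular, covering $X_1/X$ one must control the zero loci of the characteristic forms and, crucially, show that transversality to all of them is \emph{sufficient}, not merely necessary, for the restricted sheaf to realize the full logarithmic slope; and one must match this local data with genuine algebraic curves on $X$ that are defined over the base field and avoid the bad loci.
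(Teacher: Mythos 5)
Your proof of the inequality $\geq$ in \eqref{formulaek2} has a decisive gap: the plan of exhibiting a single curve $(g:C\rightarrow X,\,x)$ with $m_x(g^*D)=1$ and $\sw_x(g^*j_!\sF)=\sw_D(j_!\sF)$ cannot succeed in general. On a curve the residue field is perfect, so $\sw_x=\dimtot_x-\rk_{\Lambda}\sF$ by \eqref{swdimtotcurve}, while $\dimtot_x(g^*j_!\sF)\leq\dimtot_D(j_!\sF)$ by Proposition \ref{hymain}; hence $\sw_x\leq\dimtot_D(j_!\sF)-\rk_\Lambda\sF$, and since also $\sw_D(j_!\sF)\geq\dimtot_D(j_!\sF)-\rk_\Lambda\sF$ by \eqref{swdimtotcomp}, the equality $\sw_x=\sw_D(j_!\sF)$ with $m_x=1$ would force $\dimtot_D=\sw_D+\rk_\Lambda\sF$, which by Proposition \ref{propswdim=dimtot} forces $SS(j_!\sF)=\bT^*_XX\cup\bT^*_DX$ generically along $D$ — a very special ramification configuration. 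The paper's strategy never produces an optimal curve; it produces a \emph{sequence}: cyclic covers $X_\beta\rightarrow X$ of degree $\beta$ coprime to $p$, curves on $X_\beta$ transversal to $SS(h_\beta^*(j_!\sF))$ meeting $D_\beta$ with multiplicity $1$ (Proposition \ref{findcurve}), hence curves in $\mathcal C(X)$ with $m_x(g^*D)=\beta$ for which Saito's equality $\dimtot_{x'}=\dimtot_{D_\beta}$ (Proposition \ref{saitomain}) and the exact tame scaling $\sw_{D_\beta}=\beta\cdot\sw_D$ (Proposition \ref{unramtame}) pin down $\sw_{x'}/m_x$ between $\sw_D$ and $\sw_D-\rk_\Lambda\sF/\beta$ (Proposition \ref{conj2strong}), and one lets $\beta\rightarrow\infty$. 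Your use of a wildly ramified, possibly singular covering $X_1$ that splits $M$ into characters reintroduces exactly the difficulties the paper avoids: $\sw$ does not scale linearly under wild covers, and your claim that the image curve meets $D$ transversally fails once $X_1/X$ is ramified along $D$. The tame/asymptotic trick, credited to Saito, is precisely what circumvents this.

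The inequality $\leq$ has a second gap: ``functoriality of the logarithmic ramification filtration under base change defined by the curve'' is not a known statement in the form you invoke. Proposition \ref{logext} concerns a finite separable extension $K'/K$ of the \emph{same} henselian field, whereas restriction to a curve through a closed point of $D$ replaces the function-field residue of $K$ by a finite (perfect) residue field, and there is no formal multiplicity rule for the logarithmic filtration across such a specialization. The paper proves the upper bound (Proposition \ref{propimm}) by passing through the total dimension inequality for arbitrary pull-backs, Theorem 4.2 of \cite{HY} (Proposition \ref{hymain}), which itself relies on singular support and characteristic cycle theory, and then extracts the Swan statement asymptotically via the same tame covers; neither step is a consequence of formal functoriality of the filtration.
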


\subsection{}
In \cite{barr}, Barrientos obtained Theorem \ref{mainresultintro} assuming that $\sF$ has rank $1$. His proof relies on the equivalence of Abbes and Saito's logarithmic ramification theory and Brylinski and Kato's approach for characters (\cite{mla,bry,katochar}), and on an explicit computation of Witt vectors which emerges in {\it loc. cit.}.
In this article, to solve the problem for sheaves of any rank, we follow an idea of Saito which is different from Barrientos's method. The new idea roughly says that, after replacing the local field $K$ by a tamely ramified extension $K'$ of a sufficient large ramification index, the Swan conductor $\sw_{K'}(M)$ of a Galois module $M$ is {\it asymptotic} to the total dimension of $M$ with respect to $K'$ defined by the ramification filtration (\ref{subslopedecom}). Geometrically,  it says that Swan divisors are asymptotic to the total dimension divisors \eqref{dtdiv} after taking covers of $X$ tamely ramified along $D$. Then, we can obtain several inequalities that compare the pull-back of the Swan divisor of an \'etale sheaf and the Swan divisor of the pull-back of the sheaf (Proposition \ref{conj2strong} and Theorem \ref{proofconj1}), using analogous results for total dimension divisors of sheaves in \cite{HY,wr} (cf. Proposition \ref{hymain} and Proposition \ref{saitomain}). These inequalities immediately deduce main results of this article (Theorem \ref{thmconjb} and Theorem \ref{thmconja}).
Saito's idea also appeared in the proof of Bertini theorem due to Achinger (\cite[Theorem 1.3.1]{ach}).

The main result Theorem \ref{mainresultintro} has two applications. One is to obtain a geometric criterion for a Galois module having the same Swan conductor and total dimension (cf. Proposition \ref{sw=dimtot}). Another one is to give a logarithmic version of generalizations of Deligne and Laumon's semi-continuity property for Swan conductors of $\ell$-adic sheaves on relative curves to higher relative dimensions in geometric situations (\cite{lau}). We state the latter in the following.

\subsection{}\label{smfibintro}
Let $S$ be a separated $\kappa$-scheme of finite type, $g:X\rightarrow S$ a separated and smooth morphism of finite type and $D$ an effective Cartier divisor on $X$ relative to $S$, $U$ the complement of $D$ in $X$ and $j:U\rightarrow X$ the canonical injection. Let $\{S_\alpha\}_{\alpha\in J}$ be the set of irreducible component of $S$, $\bar\eta_{\alpha}$ an algebraic geometric generic point of $S_{\alpha}$, $X_{\alpha}=X\times_SS_{\alpha}$, $D_{\alpha}=D\times_SS_{\alpha}$ and $g_{\alpha}:X_{\alpha}\rightarrow S_{\alpha}$ the base change of $g:X\rightarrow S$ by the injection $S_{\alpha}\rightarrow S$. We assume that $D_\alpha$ is a sum of irreducible effective Cartier divisors $D_{\alpha i}$ $(i\in I_{\alpha})$ such that the restriction $g|_{D_{\alpha i}}:D_{\alpha i}\rightarrow S_{\alpha}$ is smooth. For each algebraic geometric point $\bar s\rightarrow S$, we denote by $\iota_{\bar s}:X_{\bar s}\rightarrow X$ the base change of
$\bar s\rightarrow S$ by $g:X\rightarrow S$.
Let $\sF$ be a locally constant and constructible sheaf of $\Lambda$-modules on $U$. For each $\alpha\in J$, we define the {\it generic logarithmic total dimension divisor} of $j_!\sF|_{X_{\alpha}}$ and denote by $\GLDT_{g_{\alpha}}(j_!\sF|_{X_{\alpha}})$ the unique Cartier divisor on $X_{\alpha}$ supported on $D_{\alpha}$ such that
\begin{equation*}
\iota_{\bar\eta_{\alpha}}^*(\GLDT_{g_{\alpha}}(j_!\sF|_{X_{\alpha}}))=\LDT_{X_{\bar\eta_{\alpha}}}(j_!\sF|_{X_{\bar\eta_{\alpha}}}).
\end{equation*}

\begin{theorem}\label{logsemicontintro}
We take the notation and assumptions of \ref{smfibintro}. For each algebraic geometric point $\bar s\rightarrow S$, we denote by $J_{\bar s}$ the subset of $J$ such that $\bar s$ maps to $S_{\alpha}$ for $\alpha\in J_{\bar s}$. Then, there exists a Zariski open dense subset $V$ of $S$ such that
\begin{itemize}
\item[(1)]
For any algebraic geometric point $\bar s\rightarrow V$ and any $\alpha\in J_{\bar s}$, we have
\begin{equation*}
\iota^*_{\bar s}(\GLDT_{g_{\alpha}}(j_!\sF|_{X_{\alpha}}))=\LDT_{X_{\bar s}}(j_!\sF|_{X_{\bar s}});
\end{equation*}
\item[(2)]
For any algebraic geometric point $\bar t\in S-V$ and any $\alpha \in J_{\bar t}$, we have
\begin{equation*}
\iota^*_{\bar t}(\GLDT_{g_{\alpha}}(j_!\sF|_{X_{\alpha}}))\geq\LDT_{X_{\bar t}}(j_!\sF|_{X_{\bar t}}).
\end{equation*}
\end{itemize}
\end{theorem}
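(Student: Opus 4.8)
The plan is to reduce the statement, one irreducible component of $S$ at a time, to the known semi-continuity of \emph{non-logarithmic} total dimension divisors in the same relative setting (Proposition \ref{hymain}, Proposition \ref{saitomain}, after \cite{HY} and \cite{wr}), by means of the asymptotic comparison between Swan conductors and total dimensions along tamely ramified covers that underlies the main theorem.

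First some harmless reductions. Both assertions, and the Cartier divisor $\GLDT_{g_\alpha}$, are defined one component $S_\alpha$ at a time, and an open dense $V_\alpha\subseteq S_\alpha$ disjoint from the other components is automatically open in $S$; so it suffices to produce for each $\alpha$ an open dense $V_\alpha\subseteq S_\alpha$ satisfying (1) on $V_\alpha$ and (2) on $S_\alpha\setminus V_\alpha$, and set $V=\bigcup_\alpha V_\alpha$. Thus assume $S=S_\alpha$ integral, and assume $U$ connected so that $\dim_\Lambda\sF$ is a constant $r$ whose contributions cancel on both sides, reducing the statement to the Swan terms. Finally fix a component $D_{\alpha i}$ and work in a neighbourhood of it in which it is the sole component of $D$ and is defined by a relative coordinate; the remaining components are treated identically and their contributions to $\LDT$ and $\GLDT$ simply added. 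Since $D_{\alpha i}$ is $S$-irreducible with $g|_{D_{\alpha i}}$ smooth, the geometric components of the smooth generic fibre $(D_{\alpha i})_{\bar\eta_\alpha}$ all carry one Swan conductor $\sigma_i:=\sw_{(D_{\alpha i})_{\bar\eta_\alpha}}(j_!\sF|_{X_{\bar\eta_\alpha}})$, so $\GLDT_{g_\alpha}=(\sigma_i+r)D_{\alpha i}$ locally, and the theorem becomes: $\sw_E(j_!\sF|_{X_{\bar s}})\le\sigma_i$ for every geometric point $\bar s\to S$ and every component $E$ of $(D_{\alpha i})_{\bar s}$, with equality for $\bar s$ in an open dense subset.

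For $e$ prime to $p$ let $h\colon X^{(e)}\to X$ be the degree-$e$ Kummer cover tamely ramified along $D_{\alpha i}$ (cut out by the chosen relative coordinate); it is smooth over $S$, its reduced ramification divisor $D^{(e)}_{\alpha i}$ over $D_{\alpha i}$ is smooth over $S$, and it base-changes over each $\bar s$ to the corresponding tame cover $X^{(e)}_{\bar s}\to X_{\bar s}$. The asymptotic computation driving Theorem \ref{thmconjb} (\ref{subslopedecom}) gives, for $e$ large, that the coefficient of $\GDT_{X^{(e)}}(h^*j_!\sF)$ along $D^{(e)}_{\alpha i}$ is $e\,\sigma_i+c$ and that the coefficient of $\DT_{X^{(e)}_{\bar s}}((h^*j_!\sF)|_{X^{(e)}_{\bar s}})$ along the component over $E$ is $e\,\sw_E(j_!\sF|_{X_{\bar s}})+c_{\bar s}$, with $c,c_{\bar s}\in[0,r]$ the respective wild ranks. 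Applying the semi-continuity of total dimension divisors (Proposition \ref{hymain}) to $X^{(e)}\to S$ gives $\iota^*_{\bar s}\GDT_{X^{(e)}}\ge\DT_{X^{(e)}_{\bar s}}$, hence $e\,\sigma_i+c\ge e\,\sw_E(j_!\sF|_{X_{\bar s}})+c_{\bar s}$; dividing by $e$ and letting $e\to\infty$ yields $\sigma_i\ge\sw_E(j_!\sF|_{X_{\bar s}})$ for all $\bar s$, which is assertion (2). For (1), fix one $e$ prime to $p$ and larger than $r$: Proposition \ref{hymain} gives an open dense $W_i\subseteq S$ on which $\iota^*_{\bar s}\GDT_{X^{(e)}}=\DT_{X^{(e)}_{\bar s}}$, and comparing coefficients along $D^{(e)}_{\alpha i}$ gives $e\,\sigma_i+c=e\,\sw_E(j_!\sF|_{X_{\bar s}})+c_{\bar s}$, which with $\sw_E\le\sigma_i$, $c,c_{\bar s}\in[0,r]$ and $e>r$ forces $\sw_E(j_!\sF|_{X_{\bar s}})=\sigma_i$ on $W_i$. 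Intersecting the finitely many $W_i$ over the components of $D_\alpha$ gives the open dense $V_\alpha$ on which $\iota^*_{\bar s}\GLDT_{g_\alpha}=\LDT_{X_{\bar s}}$, and assembling over $\alpha$ gives $V$.

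I expect the main difficulty to be the asymptotic comparison itself: showing, robustly enough to remain valid in the family $X^{(e)}\to S$, that a tame cover of sufficiently large prime-to-$p$ degree turns the total dimension divisor into $e$ times the Swan divisor plus a wild-rank correction, and identifying that correction — this is precisely the local-field input (\ref{subslopedecom}) at the heart of the paper, now to be applied uniformly along the generic point of $D_{\alpha i}$ and its specialisations, and it is also here that the hypothesis $g|_{D_{\alpha i}}$ smooth enters (keeping $D^{(e)}_{\alpha i}$ smooth over $S$ and allowing Proposition \ref{hymain}). Secondary points requiring care are the genuine degeneration in which two distinct components of $D_\alpha$ specialise to the same divisor on a special fibre, so that $D_{\bar s}$ is non-reduced and (2) must be read with multiplicities, and the verification that the reductions to $U$ connected and to a single component do not affect the final open set $V$. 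Alternatively one can deduce (2) directly from Theorem \ref{thmconjb}, by writing $\sw_E$ as the supremum over curves on $X_{\bar s}$ of $\sw_x/m_x$, spreading out a curve nearly realising $\sigma_i$ on $X_{\bar\eta_\alpha}$ to a relative curve over $S$, and invoking Deligne and Laumon's semi-continuity and constructibility of Swan conductors on relative curves (\cite{lau}) — together with the integrality of $\sw_E$ this recovers (1) as well — but the handling of families of curves with their marked points is heavier.
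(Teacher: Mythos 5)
Your argument for part (1) — the equality on a dense open — matches the paper's Proposition \ref{semithmpart2} almost exactly: restrict to an open dense subset of $S$ over which distinct components $D_{\alpha i}$, $D_{\alpha i'}$ keep their fibres distinct, take a tame Kummer cover $X^{(e)}\to X$ in the family along each $D_{\alpha i}$ with $e$ prime to $p$ and large compared with $\rk_\Lambda\sF$, apply Theorem \ref{nonlogsemicont} (not Proposition \ref{hymain}, which is the wrong citation here — \ref{hymain} is a pull-back inequality, not a semi-continuity statement) to $X^{(e)}\to S$, and squeeze using $\sw\le\dimtot\le\sw+\rk_\Lambda\sF$ and Hasse--Arf integrality. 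The bound $e>r$ versus the paper's $\beta>2r+1$ is a minor bookkeeping difference; either works.

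For part (2), however, your argument has a genuine gap. You relegate the case of coalescing components to a ``secondary point requiring care,'' but it is exactly this case that defeats the tame-cover-in-family strategy. Concretely, suppose two distinct $D_{\alpha 1}, D_{\alpha 2}$ in $D_\alpha$ are both smooth over $S$, but a component $E$ of $D_{\bar t}$ is simultaneously a component of $(D_{\alpha 1})_{\bar t}$ and $(D_{\alpha 2})_{\bar t}$ (e.g.\ $X=\bA^2_S$ over $S=\bA^1_k=\Spec k[t]$ with $D_{\alpha 1}=V(x)$, $D_{\alpha 2}=V(x-t)$, $\bar t$ over $t=0$). Then $E\subseteq D_{\alpha 1}\cap D_{\alpha 2}$, so ``working in a neighbourhood of $D_{\alpha 1}$ in which it is the sole component of $D$'' removes $E$ from the special fibre entirely, and your reduction sees nothing. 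If instead you keep the full $D$ and take the Kummer cover $X^{(e)}$ of degree $e$ along $D_{\alpha 1}$ without restricting, then the strict transform of $D_{\alpha 2}$ (and of every other $D_{\alpha i'}$ meeting $D_{\alpha 1}$) is a degree-$e$ Kummer cover of $D_{\alpha i'}$ ramified along $D_{\alpha 1}\cap D_{\alpha i'}$, which is \emph{singular} along the ramification locus — so the hypothesis of Theorem \ref{nonlogsemicont} that each irreducible component of the relative divisor be $S$-smooth fails, and the semi-continuity of $\DT$ cannot be invoked. Either way, the asymptotic squeeze does not reach $E$. This is precisely why the paper's proof of the inequality at closed points (Proposition \ref{part1closedpoint}) is forced onto a quite different and much longer route: first reduce to $D_s$ irreducible with $(D_s)_{\mathrm{red}}=(D_i)_s$ for all $i$, then use Proposition \ref{findcurve} and Proposition \ref{conj2strong} to find a well-placed curve on the tame cover of the special fibre that nearly computes $\sw_{(D_1)_s}$, spread it out via a projection $r'\colon X'\to\bA^{n-1}_S$ and a section $\sigma\colon S\to\bA^{n-1}_S$ to a relative curve $Y^0_V\to V$ over an \'etale neighbourhood $V$ of $s$, apply Deligne--Laumon's semi-continuity \cite[2.1.1]{lau} on that relative curve, and finally control the contributions of all $D_i$'s at the generic fibre by the pull-back inequality for $\DT$ (Proposition \ref{hymain}) and the tame comparison. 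This is the argument you sketch in one sentence at the end as ``alternatively \dots but the handling of families of curves with their marked points is heavier''; in fact it is not an alternative but is what the paper needs, and it is the bulk of the proof. Your main argument proves part (2) only under the additional assumption that the $(D_{\alpha i})_{\bar t}$ stay pairwise distinct, which is exactly the locus where part (1) already holds.
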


\subsection{}
In \cite[Theorem 4.3]{HY} (cf. Theorem \ref{nonlogsemicont}), we obtained an analogous semi-continuity property for total dimension divisors, which is related to Abbes and Saito's ramification filtration. Here, we follow a similar strategy in {\it loc. cit.} to prove Theorem \ref{logsemicontintro}. Roughly speaking,  using inequalities that compare the pull-back of the Swan divisor of an \'etale sheaf and the Swan divisor of the pull-back of the sheaf (Proposition \ref{conj2strong} and Theorem \ref{proofconj1}), we reduce the theorem above to a relative curve situation. Then, we use the Deligne and Laumon's semi-continuity property \cite[2.1.1]{lau} to finish the proof.

\subsection{}
Besides our main result Theorem \ref{mainresultintro}, there have been several works on the same topic. Matsuda initiated a study of comparing total dimensions of rank $1$ sheaves after restricting to curves (\cite{mat}, cf. \cite[Proposition 2.7]{ks}). By studying the characteristic cycles of sheaves under certain ramification conditions, Saito gives a geometric criterion for the commutativity between total dimension divisors and restricting to curves (\cite{wr}, cf. Proposition \ref{saitomain}). This result also implies that the total dimension of an \'etale sheaf on smooth varieties can be computed by restricting the sheaf to curves. Later, an inequality that compares total dimension divisors of \'etale sheaves after arbitrary pull-back was found in \cite{HY} (cf. Proposition \ref{hymain}). In the logarithmic situation, Zhukov stated a conjectural ramification invariant of sheaves on smooth surfaces following Deligne's program (\cite[2.5.3]{zhu}). As mentioned already, Esnault and Kerz's conjectures above provide a concrete connection between Deligne's program and Abbes and Saito's logarithmic ramification theory. Barrientos answered their conjectures for rank $1$ sheaves on smooth varieties (\cite{barr}). We hope that Esnault and Kerz's conjectures can be wholly answered in future.

There have been several generalizations of Deligne and Laumon's semi-continuity property \cite[2.1.1]{lau}. Saito proved a semi-continuity property for total dimensions of stalks of vanishing cycles complexes (\cite{cc}). The lower semi-continuity property for total dimension divisors of $\ell$-adic sheaves on a smooth fibration is proved in \cite{HY}. The semi-continuity property for singular supports and characteristic cycles of $\ell$-adic sheaves is obtained in \cite{HY2}.

It is well known that algebraic $D$-modules are analogues of $\ell$-adic sheaves.  A local invariant of algebraic $D$-modules on smooth complex curves called the {\it irregularity} is an analogue of Swan conductors. In \cite{andre}, Andr\'e studied ramification of $D$-modules on higher dimensional complex varieties in terms of restricting to curves and proved a semi-continuity property for irregularities of $D$-modules on relative curves. We expect that semi-continuity properties in \cite{HY,HY2,cc}  have analogues in the theory of algebraic $D$-modules.

\subsection{}
This article is arranged as follows. To make it self-contained, we briefly recall Abbes and Saito's ramification theory and the theory of the singular support and the characteristic cycle in \S 3 and \S4. In \S5, we review several useful properties for the total dimension divisor and the Swan divisor depending on results in \S 3 and \S 4. The main result of this article will be proved in \S 6. In \S 7, we discuss a geometric criterion for a Galois module having the same Swan conductor and total dimension. In \S8, we prove a semi-continuity property for Swan divisors of $\ell$-adic sheaves on smooth fibrations.

\subsection*{Acknowledgement}
The author would like to express his sincere gratitude to T. Saito for sharing ideas, which are crucial to this work. The author is grateful to A. Abbes for pointing out the question on the semi-continuity for Swan divisors during a discussion on \cite{HY}. The author is supported by JSPS postdoctoral fellowship and Kakenhi Grant-in-Aid 15F15727 during his stay at the University of Tokyo.

\section{Notation}
\subsection{}
In this article, let $k$ be a field of characteristic $p>0$. We fix a prime number $\ell$ which is different from $p$ and a finite field $\Lambda$ of characteristic $\ell$. All $k$-schemes are assumed to be separated and of finite type over $\Spec(k)$ and all morphisms between $k$-schemes are assumed to be $k$-morphisms. All sheaves of $\Lambda$-modules on $k$-schemes are assumed to be \'etale sheaves.

\subsection{}\label{bigger}
Let $X$ be a Noetherian scheme and $D$ and $E$ two Cartier divisors on $X$. We write $D\geq E$ and say that $D$ is {\it bigger} than $E$ if $D-E$ is effective.

\subsection{}
 Let $x$ be a closed point of a $k$-scheme $X$. For any irreducible closed subscheme $Z$ of $X$ containing $x$, we denote by $m_x(Z)$ the multiplicity of $Z$ at $x$ (\cite[4.3]{fulton}).

\subsection{}
Let $X$ be a smooth $k$-scheme. We denote by $\bT X$ the tangent bundle of $X$ and by $\bT^*X$ the cotangent bundle of $X$. A {\it closed conical subset} of $\bT^*X$ denotes a reduced closed subscheme of $\bT^*X$ invariant under the canonical $\mathbb G_m$-action on $\bT^*X$. Let $Z$ be a  smooth $k$-scheme and $i:Z\rightarrow X$ a closed immersion. We denote by $\bT^*_ZX$ the conormal bundle of $Z$ in $X$. For any point $x$ of $X$, we put $\bT_xX=\bT X\times_Xx$ and $\bT^*_xX=\bT^*X\times_Xx$.

\subsection{}
Let $f:X\rightarrow S$ be a morphism of schemes, $s$ a point of $S$ and $\bar s\rightarrow S$ a geometric point above $s$. We denote by $X_s$ (resp. $X_{\bar s}$) the fiber $X\times_Ss$ (resp. $X\times_S\bar s$). Assume that $f:X\rightarrow S$ is flat and of finite presentation. Let $D$ be a Cartier divisor on $X$ relative to $S$ (\cite[IV, 21.15.2]{EGA4}). Let $\pi:S'\rightarrow S$ be a morphism of $k$-schemes, $X'=X\times_SS'$ and $\pi':X'\rightarrow X$ the base change of $\pi:S'\rightarrow S$. We denote by $\pi'^*D$ the pull-back of $D$, which is a Cartier divisor on $X'$ relative to $S'$  \cite[IV, 21.15.9]{EGA4}. When $S'$ is $s$ or $\bar s$, we simply denote by $D_s$ (resp. $D_{\bar s}$) the Cartier divisor $D\times_Ss$ on $X_s$ (resp. $D\times_S\bar s$ on $X_{\bar s}$). An effective Cartier divisor $E$ on $X$ relative to $S$ is identical to a closed immersion $i:E\rightarrow X$ transversally regular relative to $S$ and of codimension $1$ (\cite[IV,19.2.2 and 21.15.3.3]{EGA4}). The fiber $E_s$ (resp. $E_{\bar s}$) is an effective Cartier divisor on $X_s$ (resp. $X_{\bar s}$).

\section{Ramification theory of Abbes and Saito}
\subsection{}
In this section, $K$ denotes a discrete valuation field, $\sO_K$ its integer ring, $\fm_K$ the maximal ideal of $\sO_K$, $F$ the residue field of $\sO_K$, $\overline K$ a separable closure of $K$, $G_K$ the Galois group of $\overline K$ over $K$ and $\mathrm{ord}:\overline K\rightarrow \mathbb Q\bigcup\{\infty\}$ a valuation normalized by $\mathrm{ord}(K^{\times})=\mathbb Z$. We assume that the characteristic of $F$ is $p>0$ and that $\sO_K$ is henselian.
Abbes and Saito defined two decreasing filtrations $G^r_K$ and $G^r_{K,\log}$ ($r\in \bQ_{\geq 0}$) of $G_K$ by closed normal subgroups called the ramification filtration and the logarithmic ramification filtration, respectively (\cite[3.1,~3.2]{as1}).

\subsection{}\label{nonlogfil}
For any $r\in \bQ_{\geq 1}$, we put
\begin{equation*}
G^{r+}_K=\overline{\bigcup_{s\in \bQ_{>r}}G^s_K}.
\end{equation*}
We denote by $G_K^0$ the group $G_K$. For any $0<r\leq 1$, the subgroup $G^r_K$ is the inertia subgroup $I_K$ of $G_K$ and $G^{1+}_K$ is the wild inertia subgroup $P_K$ of $G_K$, i.e., the Sylow subgroup of $I_K$ \cite[3.7]{as1}. If $K$ has characteristic $p$, then, for any $r\in\bQ_{>1}$, the graded piece $\Gr^rG_K=G^r_K/G^{r+}_K$ is abelian, killed by $p$ and contained in the center of $P_K/G^{r+}_{K}$ (\cite[2.15]{as2}, \cite[Corollary 2.28]{wr} and \cite{xl1}).

\begin{proposition}[{\cite[Proposition 3.7 (2)]{as1}}]\label{nonlogext}
Let $K'$ be a finite separable extension of $K$ contained in $\overline K$ of ramification index $e$. We denote by $G_{K'}$ the Galois group of $\overline K$ over $K'$ and $G_{K'}^r$ ($r\in \bQ_{\geq 0}$) the ramification filtration of $G_{K'}$. Then, for any $r\in \bQ_{\geq 1}$, we have $G^{er}_{K'}\subseteq G^r_K$. If $K'$ is unramified over $K$, the inclusion is an equality.
\end{proposition}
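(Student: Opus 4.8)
The plan is to unwind the Abbes--Saito definition of the ramification filtration in terms of tubular neighborhoods and to track how the radius rescales when $K$ is replaced by $K'$. First, the cases $0<r\leq 1$ are immediate and may be set aside: $G^{er}_{K'}$ lies in the inertia subgroup $I_{K'}$, which lies in $I_K=G^r_K$, and when $K'/K$ is unramified one even has $G^{er}_{K'}=I_{K'}=I_K=G^r_K$. So I may assume $r\geq 1$ (the argument below applies to all $r>0$). Both $G^r_K$ and $G^r_{K'}$ are, by construction, inverse limits over finite Galois extensions of the ramification subgroups of the corresponding finite Galois groups, and every finite quotient of $G_{K'}$ is dominated by $\mathrm{Gal}(L/K')$ for a suitable finite Galois extension $L/K$ containing $K'$; hence it suffices to prove, for each such $L$, that
\[
\mathrm{Gal}(L/K')^{er}\subseteq\mathrm{Gal}(L/K)^{r}\cap\mathrm{Gal}(L/K')
\]
as subgroups of $\mathrm{Gal}(L/K)$, with equality when $K'/K$ is unramified.

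I fix an embedding $L\hookrightarrow\overline K$ and recall that $\mathrm{Gal}(L/K)^{a}$ (for $a\in\bQ_{>0}$) is the stabilizer, in the finite $\mathrm{Gal}(L/K)$-set $\pi_0\!\big(X^{a}_{L/K}\,\hat\otimes\,\widehat{\overline K}\big)$, of the component containing the canonical point, where $X^{a}_{L/K}$ is the tube of radius $a$ of $\Spec\sO_L$ in an ambient affine space, defined from a finite presentation $\sO_L=\sO_K[x_1,\dots,x_n]/(f_1,\dots,f_m)$ by the conditions $\mathrm{ord}(f_\nu(x))\geq a$, and that this is independent of the presentation. Then I would present $\sO_L$ over $\sO_{K'}$ using the same generators, say $\sO_L=\sO_{K'}[x_1,\dots,x_n]/(f_1,\dots,f_m,g_1,\dots,g_k)$, the $g_j$ encoding the extra relations forced by $\sO_{K'}\subseteq\sO_L$. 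Since $K'/K$ has ramification index $e$, the valuation that is $\mathbb Z$-valued on $K'^{\times}$ equals $e\cdot\mathrm{ord}$, so $X^{er}_{L/K'}$ is the subspace of the \emph{same} ambient space cut out by $e\cdot\mathrm{ord}(f_\nu(x))\geq er$ and $e\cdot\mathrm{ord}(g_j(x))\geq er$, i.e.\ by $\mathrm{ord}(f_\nu(x))\geq r$ and $\mathrm{ord}(g_j(x))\geq r$, hence a closed rigid subspace of $X^{r}_{L/K}$. The resulting closed immersion persists after $\hat\otimes\,\widehat{\overline K}$, is $\mathrm{Gal}(L/K')$-equivariant, and carries the canonical point to the canonical point; passing to $\pi_0$ yields a pointed $\mathrm{Gal}(L/K')$-equivariant map of finite sets, under which the stabilizer of the source base point is contained in that of the target base point. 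Since the former is $\mathrm{Gal}(L/K')^{er}$ and the latter is $\mathrm{Gal}(L/K)^{r}\cap\mathrm{Gal}(L/K')$, this is the desired inclusion. When $K'/K$ is unramified one has $e=1$ and $\sO_{K'}/\sO_K$ is \'etale, so one checks that the extra conditions $\mathrm{ord}(g_j(x))\geq r$ are neutral on connected components after base change to $\widehat{\overline K}$; then the inclusion of tubes becomes an isomorphism, the map on $\pi_0$ is a bijection, and $G^{er}_{K'}=G^{r}_{K}$ follows.

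The step I expect to be the main obstacle is this comparison: one must verify that the $\sO_{K'}$-presentation built from the $\sO_K$-generators genuinely produces a base-point-preserving, $\mathrm{Gal}(L/K')$-equivariant closed immersion of tubes, and --- for the equality in the unramified case --- that the additional defining conditions do not disconnect anything near the canonical point after base change to $\widehat{\overline K}$. This rests on the presentation-independence of the $X^{a}$-construction together with careful bookkeeping of the canonical points, which is precisely the technical heart of the treatment in \cite[\S3]{as1}.
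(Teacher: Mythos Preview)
The paper does not give its own proof of this statement; it is quoted from \cite[Proposition~3.7(2)]{as1}. Your sketch is a faithful outline of the argument there: reduce to finite Galois $L/K$, compare the Abbes--Saito tubes via a shared presentation, and read off the inclusion of stabilizers from the resulting $\mathrm{Gal}(L/K')$-equivariant pointed map on $\pi_0$. For the inclusion $G^{er}_{K'}\subseteq G^{r}_{K}$ your reasoning is correct.

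In the unramified case your phrasing overshoots. The closed immersion $X^{r}_{L/K'}\hookrightarrow X^{r}_{L/K}$ is \emph{not} an isomorphism after base change to $\widehat{\overline K}$, and the induced map on $\pi_0$ is \emph{not} a bijection: already when $L=K'$ the target has $[K':K]$ components while the source has one. What is true --- and what the \'etaleness of $\sO_{K'}/\sO_K$ actually buys you --- is that the extra conditions $\mathrm{ord}(g_j)\geq r$ cut out a \emph{union of connected components} of $X^{r}_{L/K}\hat\otimes\widehat{\overline K}$, namely those whose associated embedding $L\hookrightarrow\overline K$ restricts to the fixed embedding of $K'$. Hence the map on $\pi_0$ is an injection, and in particular the canonical component of $X^{r}_{L/K}$ coincides with the canonical component of $X^{r}_{L/K'}$. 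That is exactly what is needed for the reverse inclusion $G^{r}_{K}\cap G_{K'}\subseteq G^{r}_{K'}$, so with this correction your argument goes through.
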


\subsection{}\label{logfil}
For any $r\in \bQ_{\geq 0}$, we put
\begin{equation*}
G^{r+}_{K,\log}=\overline{\bigcup_{s\in \bQ_{>r}}G^s_{K,\log}}.
\end{equation*}
The subgroup $G_{K,\log}^0$ is the inertia subgroup $I_K$ of $G_K$ and $G^{1+}_{K,\log}$ is the wild inertia subgroup $P_K$ of $G_K$.
For any $r\in \bQ_{>0}$, the graded piece $\Gr_{\log}^rG_K=G^r_{K,\log}/G^{r+}_{K,\log}$ is abelian, killed by $p$ and contained in the center of $P_K/G^{r+}_{K,\log}$ (\cite[5.12]{as2}, \cite[Theorem 1.24]{saito cc}, \cite{as3}, \cite{xl1} and \cite{xl2}).

\begin{proposition}[{\cite[Proposition 3.15 (3)]{as1}}]\label{logext}
Let $K'$ be a finite separable extension of $K$ contained in $\overline K$ of ramification index $e$. We denote by $G_{K'}$ the Galois group of $\overline K$ over $K'$ and $G_{K',\log}^r$ ($r\in \bQ_{\geq 0}$) the ramification filtration of $G_{K'}$. Then, for any $r\in \bQ_{\geq 0}$, we have $G^{er}_{K',\log}\subseteq G^r_{K,\log}$. If $K'$ is tamely ramified over $K$, the inclusion is an equality.
\end{proposition}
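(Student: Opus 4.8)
The plan is to deduce the logarithmic statement from the corresponding non-logarithmic result, Proposition \ref{nonlogext}, together with a comparison between the two filtrations. Recall (see \cite[3.15]{as1}) that the logarithmic ramification filtration of $G_K$ is defined via the non-logarithmic ramification filtration attached not to $\sO_K$ itself but to a well-chosen ``logarithmic'' base: one passes to the henselization of $\sO_K[S]$ along the ideal $(S^{\pi} - t)$ for a uniformizer $t$ of $K$ (where $\pi = S^{e}$ after a suitable identification), so that the log-filtration becomes a genuine ramification filtration on this larger discretely valued field. First I would set up this geometric description of $G^r_{K,\log}$ simultaneously for $K$ and for $K'$, so that both logarithmic filtrations are realized as ordinary ramification filtrations on comparable discrete valuation rings, with the ramification index $e = e(K'/K)$ appearing as the scaling factor between the two chosen uniformizers.

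The main steps are then: (1) reduce to the case where $K'/K$ is \emph{totally} ramified, since the unramified part is handled by the equality assertion in Proposition \ref{nonlogext} (the unramified case contributes no rescaling, $e=1$, and passes directly to the log-filtration); (2) in the totally ramified case, translate the inclusion $G^{er}_{K',\log}\subseteq G^r_{K,\log}$ into an inclusion of ordinary ramification subgroups on the auxiliary log-base, where it follows from Proposition \ref{nonlogext} applied to the induced extension of auxiliary fields, whose ramification index is again $e$ (the logarithmic construction is compatible with the tower); (3) for the sharper claim, suppose $K'/K$ is tamely ramified. Then the auxiliary extension of log-bases is itself \emph{unramified}: tameness of $K'/K$ means exactly that, after adjoining an $e$-th root of a uniformizer, no new wild ramification is introduced, so the residue extension of the log-bases is separable and the uniformizers match up. Hence the equality part of Proposition \ref{nonlogext} applies and gives $G^{er}_{K',\log} = G^r_{K,\log}$.

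The point requiring the most care is step (3): one must check that tame ramification of $K'/K$ really does correspond to \emph{unramifiedness} of the associated extension of auxiliary log-bases, and not merely to tameness there. This hinges on the precise normalization of the logarithmic construction — namely that adjoining $S$ with $S^e = $ (uniformizer) already absorbs precisely the tame part of the ramification of $K'/K$ — so that what remains is an unramified extension of the $S$-adic local rings. Once this is verified the rest is formal: apply \cite[3.7(2)]{as1} in the auxiliary setting and unwind the definition of $G^{\bullet}_{K,\log}$ and $G^{\bullet}_{K',\log}$. The inclusion in general, and the equality in the tame case, then follow by transport of structure. (In practice this is exactly the argument of \cite[Proposition 3.15(3)]{as1}, to which we refer for the detailed verification of the compatibilities.)
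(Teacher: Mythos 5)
The paper offers no proof of Proposition~\ref{logext}: it is quoted verbatim from \cite[Proposition~3.15~(3)]{as1}, so there is no internal argument to compare your sketch against.

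Judged on its own terms, your sketch rests on a characterization of $G^r_{K,\log}$ as a non-logarithmic ramification filtration computed after adjoining a root of a uniformizer of $K$; but that is not the Abbes--Saito definition. They construct the logarithmic filtration directly via log-smooth embeddings and log tubular neighborhoods in \cite[\S 3]{as1}, not by passage to a tame cover. The comparison of $G^r_{K,\log}$ with tame base change is a theorem, and it is essentially the statement you are trying to prove, so taking it as the starting point makes the plan circular. Even granting your framework, the two load-bearing steps are asserted rather than verified: that ``the logarithmic construction is compatible with the tower'' (your step~2) is exactly the functoriality one must establish from the definitions, and that tameness of $K'/K$ makes the induced auxiliary extension \emph{unramified} rather than merely tame (your step~3) is flagged by you as the delicate point but not checked. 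Your closing parenthetical then refers back to \cite[Proposition~3.15~(3)]{as1} for ``the detailed verification of the compatibilities,'' which concedes that what you have is an outline, not a proof. A self-contained argument would have to start from Abbes and Saito's actual construction, trace its functoriality under $\sO_K\hookrightarrow\sO_{K'}$, and use tameness to show that the relevant morphism of log schemes is log-\'etale; it is that log-\'etaleness, not unramifiedness of a tame cover, that produces the equality in the tame case.
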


\begin{proposition}[{\cite[Proposition 3.15 (1)(4)]{as1}}]\label{lnlcomp}
For any $r\in \bQ_{\geq 0}$, we have $G_K^{r+1}\subseteq G_{K,\log}^r\subseteq G_K^{r}$.  If the residue field $F$ is perfect, then we have $G^{r+1}_K=G^r_{K,\log}$ for any $r\in \bQ_{\geq 0}$ and the logarithmic ramification filtration of $G_K$ coincides with its classical upper numbering filtration.
\end{proposition}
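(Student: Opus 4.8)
The plan is to unwind the rigid-geometric definitions of the two filtrations from \cite{as1} and to compare the tubular neighborhoods that produce them; the assertion is \cite[Proposition 3.15 (1),(4)]{as1}, so I only describe the mechanism. Fix a finite Galois subextension $L/K$ of $\overline K/K$ with group $G=\mathrm{Gal}(L/K)$. Recall that $G^r_{L/K}$ is the stabilizer of the geometric connected component through the canonical point of the Abbes--Saito affinoid $X^r_{L/K}$, that $G^r_{L/K,\log}$ is defined the same way from the logarithmic affinoid $X^r_{L/K,\log}$, and that $G^r_K=\varprojlim_L G^r_{L/K}$, $G^r_{K,\log}=\varprojlim_L G^r_{L/K,\log}$. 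Concretely, after a presentation $\sO_L=\sO_K[x_1,\dots,x_n]/(f_1,\dots,f_m)$, the affinoid $X^r_{L/K}$ is the rigid tube $\{\mathrm{ord}(x_i)\ge 0,\ \mathrm{ord}(f_j)\ge r\}$, and $X^r_{L/K,\log}$ is built from the same presentation enriched by a uniformizer $\pi$ of $K$ and a uniformizer of $L$ tied together through $e=e_{L/K}$, so that an extra coordinate whose ``radius'' is measured against $\mathrm{ord}(\pi)=1$ enters the construction.

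First I would establish the two inclusions. For a compatible choice of presentations of the $\sO_L$, one exhibits comparison morphisms of affinoids
\begin{equation*}
X^{r+1}_{L/K}\longrightarrow X^r_{L/K,\log}\longrightarrow X^r_{L/K},
\end{equation*}
the right-hand one induced by the map of defining data that forgets the logarithmic part, and the left-hand one the morphism that absorbs the logarithmic coordinate at the cost of shifting the radius by $\mathrm{ord}(\pi)=1$. Both are $G$-equivariant and carry the canonical component to the canonical component, so the induced maps on $\pi_0(-\otimes_K\overline K)$ give, upon taking stabilizers of the canonical components, $G^{r+1}_{L/K}\subseteq G^r_{L/K,\log}\subseteq G^r_{L/K}$. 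Passing to the limit over $L$ yields $G^{r+1}_K\subseteq G^r_{K,\log}\subseteq G^r_K$.

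When $F$ is perfect I would exploit that $\sO_L$ is monogenic over $\sO_K$, so that one may take $n=1$, $x_1=\pi_L$ a uniformizer of $L$, and $f_1$ its minimal polynomial. Then $X^r_{L/K}\otimes_K\overline K$ is an explicit finite union of closed discs about the conjugates $\sigma(\pi_L)$, $\sigma\in G$, joined according to the valuations $\mathrm{ord}(\sigma(\pi_L)-\tau(\pi_L))$; a direct computation --- which, as usual, brings in the different $\mathfrak d_{L/K}=(f_1'(\pi_L))$ --- identifies $\pi_0(X^r_{L/K}\otimes_K\overline K)$ with $G/G^{(r-1)}_{L/K}$, where $G^{(\bullet)}_{L/K}$ is the classical upper numbering filtration, hence $G^r_K=G^{(r-1)}_K$. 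The parallel computation with the logarithmic affinoid identifies $G^r_{K,\log}$ with $G^{(r)}_K$ --- the discrepancy being precisely the radius-$1$ shift of the extra coordinate --- whence $G^{r+1}_K=G^r_{K,\log}$ and $G^r_{K,\log}$ is the classical upper numbering filtration of $G_K$. (Alternatively, once the monogenic picture is set up, the identification follows from Herbrand's classical theory together with Proposition \ref{nonlogext} and Proposition \ref{logext}.)

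The step I expect to be the main obstacle is making the comparison of the logarithmic and non-logarithmic tubular neighborhoods rigorous: one must choose presentations of the $\sO_L$ coherently enough in $L$ that the radius-shifting morphism $X^{r+1}_{L/K}\to X^r_{L/K,\log}$ is well defined and visibly $G$-equivariant, and one must verify that this and the forgetful morphism induce the asserted maps on connected components after base change to $\overline K$. The passage to the limit over $L$ and the translation of the monogenic case into Herbrand's formalism are then routine.
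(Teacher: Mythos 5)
This proposition is stated in the paper with no proof of its own; it is a direct citation of \cite[Proposition 3.15 (1)(4)]{as1}. So there is no ``paper's proof'' to compare against, and what you are actually doing is reconstructing the Abbes--Saito argument, which you yourself acknowledge.

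Your reconstruction is plausible in broad strokes and follows the line that Abbes and Saito actually take: the two filtrations are read off from $\pi_0$ of the non-logarithmic and logarithmic tubular neighborhoods $X^r_{L/K}$ and $X^r_{L/K,\log}$, and the containments $G^{r+1}_K\subseteq G^r_{K,\log}\subseteq G^r_K$ come from $G$-equivariant comparison morphisms of affinoids together with a limit over $L$; the perfect-residue-field case is indeed handled through the monogenicity of $\sO_L$, an explicit disc decomposition of $X^r_{L/K}\otimes_K\overline K$, and the relation to the different. Two caveats. First, the shift-by-one morphism $X^{r+1}_{L/K}\to X^r_{L/K,\log}$ is not simply a matter of ``shifting the radius'' after choosing presentations compatibly; in \cite{as1} the inclusion $G^{r+1}_K\subseteq G^r_{K,\log}$ rests on a nontrivial comparison of the logarithmic and non-logarithmic immersions (smooth vs.\ log-smooth embeddings), and the independence of the filtration from the presentation is itself a theorem that must be invoked rather than assumed. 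You flag this as the ``main obstacle,'' which is fair, but it is really the whole content of part (1). Second, your parenthetical fallback --- that once monogenicity is in hand, the perfect-residue-field identification ``follows from Herbrand's classical theory together with Propositions \ref{nonlogext} and \ref{logext}'' --- is too quick: those propositions only give functoriality in finite extensions and do not by themselves compute either Abbes--Saito filtration; one must actually carry out the disc computation (or cite the relevant comparison in \cite{as1}) to match it against the classical filtration. With those two steps spelled out, your outline would match the original source.
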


\subsection{}\label{subslopedecom}
Let $M$ be a finitely generated $\Lambda$-module with a continuous $P_K$-action. The module $M$ has a decomposition (\cite[1.1]{K})
\begin{equation}\label{slopedecom}
M=\bigoplus\limits_{r\geq 1}M^{(r)}
\end{equation}
into $P_K$-stable submodules $M^{(r)}$, such that $M^{(1)}=M^{P_K}$ and, for every $r>1$,
\begin{equation*}
(M^{(r)})^{G^r_K}=0\ \ \ {\rm and}\ \ \  (M^{(r)})^{G^{r+}_K}=M^{(r)}.
\end{equation*}
The decomposition \eqref{slopedecom} is called the \emph{slope decomposition} of $M$. The values $r\geq 1$ for which  $M^{(r)}\neq 0$ are called the {\it slopes} of $M$. We say that $M$ is {\it isoclinic} if it has only one slope.

Assume that $M$ is isoclinic of slope $r>1$, that $K$ has characteristic $p$ and that $\Lambda$ contains a primitive $p$-th roots of unit. Then, the graded piece $\Gr^r G_K$ is abelian and killed by $p$ and $M$ has a faithful $\Gr^r G_K$-action. The module $M$ has a unique direct sum decomposition (cf. the proof of \cite[Lemma 6.7]{rc})
\begin{equation}\label{ccdecomp}
M=\bigoplus_{\chi\in X(r)} M_{\chi},
\end{equation}
into $P_K$-stable submodules $M_{\chi}$, where $X(r)$ denotes the set of isomorphism classes of non-trivial finite characters $\chi:\Gr^r G_K\rightarrow \Lambda^{\times}$ and $M_{\chi}$ is a direct sum of finitely many copies of $\chi$. The decomposition \eqref{ccdecomp} is called the {\it center character decomposition} of $M$. The characters $\chi\in X(r)$ for which $M_{\chi}\neq 0$ are called the {\it center characters} of $M$.

\subsection{}\label{logmodsection}
Let $M$ be a finitely generated $\Lambda$-module with a continuous $P_K$-action. The module $M$ has a
 decomposition (\cite[1.1]{K})
\begin{equation}\label{logslopedecom}
M=\bigoplus\limits_{r\geq 0}M^{(r)}_{\log}
\end{equation}
into $P_K$-stable submodules $M^{(r)}_{\log}$, such that $M^{(0)}_{\log}=M^{P_K}$ and, for every $r>0$,
\begin{equation*}
(M^{(r)}_{\log})^{G^r_{K,\log}}=0\ \ \ {\rm and}\ \ \  (M^{(r)}_{\log})^{G^{r+}_{K,\log}}=M^{(r)}_{\log}.
\end{equation*}
The decomposition \eqref{slopedecom} is called the \emph{logarithmic slope decomposition} of $M$. The values $r\geq 0$ for which  $M^{(r)}_{\log}\neq 0$ are called the {\it logarithmic slopes} of $M$.  The values $r\geq 0$ for which  $M^{(r)}\neq 0$ are called the {\it slopes} of $M$. We say that $M$ is {\it logarithmically isoclinic} if it has only one logarithmic slope.

Assume that $M$ is logarithmically isoclinic of slope $r>0$ and that $\Lambda$ contains a primitive $p$-th roots of unit.  Hence, the graded piece $\Gr_{\log}^rG_K$ is abelian and killed by $p$ and $M$ has a  faithful $\Gr^r_{\log} G_K$-action. The module $M$ has a unique direct sum decomposition (\cite[Lemma 6.7]{rc})
\begin{equation}\label{lccdecomp}
M=\bigoplus_{\chi\in Y(r)} M_{\log,\chi},
\end{equation}
into $P_K$-stable submodules $M_{\log,\chi}$, where $Y(r)$ denotes the set of isomorphism classes of non-trivial finite characters $\chi:\Gr_{\log}^r G_K\rightarrow \Lambda^{\times}$ and $M_{\log,\chi}$ is a direct sum of finitely many copies of $\chi$. The decomposition \eqref{lccdecomp} is called the {\it logarithmic center character decomposition} of $M$.
The characters $\chi\in Y(r)$ for which $M_{\log,\chi}\neq 0$ are called the {\it logarithmic center characters} of $M$.

\subsection{}\label{swdimtot}
Let $M$ be a finitely generated $\Lambda$-module with a continuous $P_K$-action. The \emph{total dimension} of $M$ is defined by
\begin{equation}\label{dtofmod}
\mathrm{dimtot}_K M=\sum_{r\geq 1}r\cdot \mathrm{dim}_{\Lambda}M^{(r)}.
\end{equation}
The \emph{Swan conductor} of $M$ is defined by
\begin{equation}\label{swofmod}
\mathrm{sw}_K M=\sum_{r\geq 0}r\cdot \mathrm{dim}_{\Lambda}M^{(r)}_{\log}.
\end{equation}
By Proposition \ref{lnlcomp}, we have
\begin{equation}\label{swdimtotcomp}
\sw_KM\leq\dimtot_KM\leq\sw_K M+\dim_\Lambda M.
\end{equation}
By {\it loc. cit.}, we see that
\begin{equation*}
\mathrm{sw}_KM=\mathrm{dimtot}_K M
\end{equation*}
if and only if
the slope decomposition and the logarithmic slope decomposition of $M$ are the same.
If the residue field $F$ is perfect, we have
\begin{equation}\label{swdimtotcurve}
\mathrm{dimtot}_K M=\mathrm{sw}_KM +\dim_{\Lambda }M
\end{equation}
and $\sw_K M$ equals the classical Swan conductor of $M$ (\cite[19.3]{lr}).

\begin{proposition}\label{unramtame}
Let $M$ be a finitely generated $\Lambda$-module with a continuous $P_K$-action and $K'$ a finite separable extension of $K$ contained in $\overline K$ of ramification index $e$. Then, we have
\begin{equation*}
\sw_{K'}M\leq e\cdot\sw_KM\ \ \ \textrm{and}\ \ \ \dimtot_{K'}M\leq e\cdot\dimtot_KM.
\end{equation*}
If $K'$ is tamely ramified over $K$, we have
\begin{equation}\label{tamesw}
\sw_{K'}M= e\cdot\sw_KM.
\end{equation}
If $K'$ is unramified over $K$, we have
\begin{equation}\label{unramdimtot}
\dimtot_{K'}M= \dimtot_KM.
\end{equation}
\end{proposition}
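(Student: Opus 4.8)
The plan is to reduce everything to the (logarithmically) isoclinic case by means of the slope decompositions \eqref{slopedecom} and \eqref{logslopedecom}. These are canonical, so every summand $M^{(r)}$ (resp. $M^{(r)}_{\log}$) is a $P_K$-submodule, hence a $P_{K'}$-submodule since $P_{K'}\subseteq P_K$ (the wild inertia of a subextension); as $\sw_K$ and $\dimtot_K$ are additive on direct sums of $P_K$-modules, it suffices to control the contribution of a single isoclinic piece. The only further input is the comparison of the ramification filtrations of $K$ and $K'$, which is precisely Propositions \ref{nonlogext} and \ref{logext}.

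For the Swan conductor I would write $M=\bigoplus_{r\geq 0}M^{(r)}_{\log}$, fix $r>0$, and put $N=M^{(r)}_{\log}$, so that $N^{G^{r+}_{K,\log}}=N$. For every $s>er$ one has $s/e>r$, hence by Proposition \ref{logext} and the definition of $G^{r+}_{K,\log}$
\begin{equation*}
G^{s}_{K',\log}\subseteq G^{s/e}_{K,\log}\subseteq G^{r+}_{K,\log},
\end{equation*}
so $G^{s}_{K',\log}$---and, by continuity of the action, $G^{(er)+}_{K',\log}=\overline{\bigcup_{s>er}G^{s}_{K',\log}}$---acts trivially on $N$. Thus every logarithmic slope of $N$ relative to $K'$ is $\leq er$, and $\sw_{K'}N\leq er\cdot\dim_{\Lambda}N=e\cdot\sw_KN$; the piece $M^{(0)}_{\log}=M^{P_K}$ stays tame over $K'$ and contributes $0$ to both sides. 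Summing over $r$ gives $\sw_{K'}M\leq e\cdot\sw_KM$. When $K'/K$ is tamely ramified, Proposition \ref{logext} upgrades the first inclusion to the equality $G^{s}_{K',\log}=G^{s/e}_{K,\log}$ for all $s\geq 0$; inserting this into the conditions that characterize $M^{(r)}_{\log}$ shows that the $K$-logarithmic-slope-$r$ summand is exactly the $K'$-logarithmic-slope-$er$ summand, so the logarithmic slope decomposition is merely reindexed and $\sw_{K'}M=e\cdot\sw_KM$.

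The total dimension statements are obtained the same way, using the decomposition $M=\bigoplus_{r\geq 1}M^{(r)}$ and Proposition \ref{nonlogext} instead of Proposition \ref{logext}. For $r>1$ and $N=M^{(r)}$, any $s>er$ satisfies $s/e>1$, so $G^{s}_{K'}\subseteq G^{s/e}_K\subseteq G^{r+}_K$; hence every slope of $N$ relative to $K'$ is $\leq er$ and $\dimtot_{K'}N\leq er\cdot\dim_{\Lambda}N=e\cdot\dimtot_KN$, while the tame summand $M^{(1)}=M^{P_K}$ remains tame over $K'$, so $\dimtot_{K'}M^{(1)}=\dim_{\Lambda}M^{(1)}\leq e\cdot\dimtot_KM^{(1)}$. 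Summing yields $\dimtot_{K'}M\leq e\cdot\dimtot_KM$. If $K'/K$ is unramified then $e=1$ and Proposition \ref{nonlogext} gives $G^{r}_{K'}=G^{r}_K$ for all $r\geq 1$ (and consequently $P_{K'}=P_K$), so the slope decomposition of $M$ over $K'$ coincides with that over $K$ and $\dimtot_{K'}M=\dimtot_KM$.

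The argument is elementary once Propositions \ref{nonlogext} and \ref{logext} are available; the only places demanding a little care are the bookkeeping with the ramification index $e$ in the indices of the filtrations, the separate (and trivial) treatment of the tame summand---logarithmic slope $0$ for the Swan conductor, slope $1$ for the total dimension---and the routine additivity of $\sw_K$ and $\dimtot_K$ over direct sums. I do not anticipate a genuine obstacle.
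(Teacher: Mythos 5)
Your proof is correct and follows the same route as the paper, which simply asserts that the proposition is deduced from Propositions \ref{nonlogext} and \ref{logext}; you have carried out exactly the bookkeeping the paper leaves implicit (decompose into slopes, apply the filtration comparison to each isoclinic piece, treat the tame part separately, and sum). One minor wording point: in the total-dimension argument the inclusion $G^{s/e}_K\subseteq G^{r+}_K$ uses $s/e>r$ (which indeed holds since $s>er$), while the condition $s/e>1$ you mention only guarantees that Proposition \ref{nonlogext} is applicable; both facts are true in your setting, so the argument stands, but it would be cleaner to cite $s/e>r$ explicitly.
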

It is deduced from Proposition \ref{nonlogext} and Proposition \ref{logext}.

\begin{theorem}[Hasse-Arf Theorem, {\cite[3.4.3]{xl1} and  \cite[3.3.5, 3.5.14]{xl2}}]\label{xlmain}
Let $M$ be a finitely generated $\Lambda$-module with a continuous $G_K$-action. Then,
\begin{itemize}
\item[(i)] The total dimension $\dimtot_KM$ is a non-negative integer.
\item[(ii)] When $p=2$ and $K$ has characteristic $0$, we have $2\cdot\sw_KM\in \mathbb Z_{\geq 0}$. In other cases, the Swan conductor $\sw_KM$ is a non-negative integer.
\end{itemize}
\end{theorem}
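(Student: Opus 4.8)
The plan is to reduce the integrality statements to the analogous facts for $p$-adic differential modules over the Robba ring, following the comparison between Abbes--Saito's ramification filtration and Kedlaya's differential ramification theory (this is Xiao's route). \emph{Reductions.} Both $\dimtot_K$ and $\sw_K$ are additive in short exact sequences of $\Lambda[G_K]$-modules and are unchanged under enlarging $\Lambda$ to a bigger finite field, since the subgroups $G^r_K$, $G^r_{K,\log}$ and the operation of taking invariants commute with flat base change on coefficients. Using the slope decomposition \eqref{slopedecom} (resp. the logarithmic slope decomposition \eqref{logslopedecom}) we reduce to $M$ isoclinic (resp. logarithmically isoclinic); the tame summand $M^{P_K}$ contributes an integer to $\dimtot_K$ and $0$ to $\sw_K$, so we may assume the slope $r$ satisfies $r>1$ (resp. $r>0$). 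Enlarging $\Lambda$ so that it contains a primitive $p$-th root of unity, the center character decomposition \eqref{ccdecomp} (resp. \eqref{lccdecomp}) reduces us further to the case where $M$ is a sum of copies of a single (logarithmic) center character $\chi$ of $\Gr^r G_K$ (resp. $\Gr^r_{\log}G_K$); then $\dimtot_K M=r\dim_\Lambda M$ and, in the logarithmic case, $\sw_K M=r\dim_\Lambda M$, so the problem is to bound the denominator of $r\dim_\Lambda M$.

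\emph{Passage to differential modules.} If $\mathrm{char}\,K=0$, first apply the mixed-characteristic comparison (Xiao, cf. Abbes--Saito) relating, via a deformation argument on $\sO_K$, the ramification filtration of $G_K$ to that of a complete discretely valued field of equal characteristic $p$; this comparison matches slopes up to a normalization preserving integrality except for a denominator $2$ when $p=2$, which is the source of the exception in (ii). We may thus assume $K=F((t))$ with $F$ of characteristic $p$, $F$ not necessarily perfect. Now invoke the comparison theorem identifying the ramification breaks of a $\Lambda$-representation of $G_{F((t))}$ with the differential breaks — the normalized logarithmic intrinsic generic radii of convergence — of an associated differential module over the Robba ring $\mathcal R_K$: under this dictionary $\dimtot_K M$ becomes the differential Swan conductor and $\sw_K M$ the Swan conductor computed with the logarithmic connection. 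The desired integrality is then Kedlaya's theorem that the differential Swan conductor over $\mathcal R_K$ is a non-negative integer; its proof treats rank $1$ directly (the Swan conductor is the $t$-adic order of the logarithmic derivative of a generator) and the general case by descending from a tamely ramified cover of ramification index $e'$, over which the module is an iterated extension of rank-$1$ objects and the Swan conductor scales by $e'$, compare Proposition \ref{unramtame}.

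\emph{Main obstacle.} Given the structural inputs already recorded (abelianness of the graded pieces, the center character decompositions, and the base-change formulas), the reductions above and the integrality input for differential modules are comparatively routine; the serious content, and the step I expect to be the hard part, is the comparison theorem of the previous paragraph — identifying the rigid-geometric ramification breaks of Abbes--Saito, defined through the connected components of the tubular neighborhoods of the diagonal (the Abbes--Saito spaces), with the convergence-radii breaks of the associated $p$-adic differential module. This is exactly what Xiao's papers establish, through a delicate analysis of the geometry of these spaces, together with the separate mixed-characteristic comparison, which is where the $p=2$, $\mathrm{char}\,K=0$ caveat unavoidably enters.
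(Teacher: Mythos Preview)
Your proposal and the paper diverge in an important way. The paper does not attempt to sketch Xiao's internal argument at all; it gives a two-line reduction: by Serre's $cde$ triangle, pass from $\Lambda$-modules ($\Lambda$ a finite field of characteristic $\ell$) to virtual $\overline{\mathbb Q}_\ell$-representations with finite monodromy, then use non-canonical isomorphisms $\overline{\mathbb Q}_\ell\cong\mathbb C\cong\overline{\mathbb Q}_p$ to regard these as $p$-adic representations, and finally quote Xiao's Hasse--Arf theorems verbatim. Your slope and center-character reductions are not used.

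More importantly, your route has a genuine gap at the step ``invoke the comparison theorem identifying the ramification breaks of a $\Lambda$-representation \dots\ with the differential breaks \dots\ of an associated differential module over the Robba ring.'' No such comparison exists for $\Lambda$-representations: Xiao's comparison (and Kedlaya's integrality) is between the Abbes--Saito breaks of a \emph{$p$-adic} representation with finite monodromy and the radii-of-convergence breaks of the associated $p$-adic differential module. A mod-$\ell$ representation has no associated differential module over $\mathcal R_K$. To make your argument run you must first lift $M$ from $\Lambda$-coefficients to characteristic zero in a way that preserves the slope multiplicities; that is exactly what the $cde$ triangle (Brauer lifting to virtual $\overline{\mathbb Q}_\ell$-representations) accomplishes, and it is the one step the paper spells out. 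Once you insert this, your sketch becomes a reasonable outline of what lies behind the citation, but as written the passage to differential modules is unjustified.
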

Using Serre's {\it cde triangle} (\cite[Theorem 33]{lr}),  the theorem is reduced to the case where $M$ is a finite dimensional virtual $\overline{\mathbb Q}_{\ell}$-modules with a $G_K$-action that factors through a finite quotient group. By non-canonical isomorphisms $\overline{\mathbb Q}_{\ell}\cong\mathbb C\cong \overline{\mathbb Q}_{p}$, we may consider $M$ as a virtual $p$-adic representation of $G_K$ with finite monodromy. Hence, the theorem is deduced by Hasse-Arf theorems due to Xiao ({\cite[3.4.3]{xl1} and \cite[3.3.5, 3.5.14]{xl2}).

\subsection{}
In the rest of this section, we assume that $K$ has characteristic $p$ and that $F$ is of finite type over a perfect field. We denote by $\sO_{\overline K}$ the integral closure of $\sO_K$ in $\overline K$ and by $\overline F$ the residue field of $\sO_{\overline K}$. For any rational number $r$, we put
\begin{align*}
\fm^r_{\overline K}=\big\{x\in \overline K^{\times}\,;\,\mathrm{ord}(x)\geq r\big\}\ \ \ \textrm{and}\ \ \
\fm^{r+}_{\overline K}=\big\{x\in \overline K^{\times}\,;\,\mathrm{ord}(x)> r\big\}.
\end{align*}
Notice that the quotient $\fm^{r}_{\overline K}/\fm^{r+}_{\overline K}$ is a $1$-dimensional $\overline F$-vector space.
For any rational number $r>1$, there exists an injective homomorphism, call the {\it characteristic form}  (\cite[Corollary 2.28]{wr}),
\begin{equation}\label{charform}
\mathrm{char}:\Hom_{\mathbb F_p}(\Gr^rG_K,\mathbb F_p)\rightarrow \Hom_{\overline F}(\fm^{r}_{\overline K}/\fm^{r+}_{\overline K}, \Omega^1_{\sO_K}\otimes_{\sO_K}\overline F).
\end{equation}
Let $\Omega^1_{F}(\log)$ be the finitely dimensional $F$-vector space
\begin{equation*}
\Omega^1_{F}(\log)=(\Omega^1_{F}\oplus(F\otimes_{\mathbb Z}K^{\times}))/(d\bar a-\bar a\otimes a\,;\,a\in\sO_K^{\times}),
\end{equation*}
where $\bar a$ denotes the residue class of an element $a\in \sO_K$.
We have an exact sequence
\begin{equation*}
0\rightarrow \Omega^1_F\xrightarrow{\iota}\Omega^1_F(\log)\xrightarrow{\mathrm{res}}F\rightarrow 0,
\end{equation*}
where $\iota(a)=(a,0)$ for $a\in\Omega^1_F$ and $\mathrm{res}((0,b\otimes c))=b\cdot \mathrm{ord}(c)$ for $b\in F$ and $c\in K^{\times}$.
For any rational number $r>0$, there exists an injective homomorphism, call the {\it refined Swan conductor} (\cite[Corollary 2.28]{wr}),
\begin{equation}
\mathrm{rsw}:\Hom_{\mathbb F_p}(\Gr^r_{\log}G_K,\mathbb F_p)\rightarrow \Hom_{\overline F}(\fm^{r}_{\overline K}/\fm^{r+}_{\overline K},\Omega^1_F(\log)\otimes_{F}\overline F).
\end{equation}

\begin{proposition}[{\cite[Lemma 5.13]{as2}}]\label{diaglnl}
For each $r\geq 1$, The canonical inclusion $G^r_{K,\log}\subseteq G_K^r$ induces a map: $\theta_r:\Gr^r_{\log}G_K\rightarrow\Gr^rG_K$.
Let $\phi:\Omega^1_{\sO_K}\otimes_{\sO_K}F\rightarrow \Omega^1_F(\log)$ be the composition of the canonical projection $\pi:\Omega^1_{\sO_K}\otimes_{\sO_K}F\rightarrow \Omega^1_F$ and the injection $\iota:\Omega^1_F\rightarrow\Omega^1_F(\log)$.
Then, for each $r>1$, we have the following commutative diagram
\begin{equation}
\xymatrix{\relax
\Hom_{\mathbb F_p}(\Gr^rG_K,\mathbb F_p)\ar[r]^-(0.5){\mathrm{char}}\ar[d]_{\theta^\vee_r}&\Hom_{\overline F}(\fm^{r}_{\overline K}/\fm^{r+}_{\overline K}, \Omega^1_{\sO_K}\otimes_{\sO_K}\overline F)\ar[d]^{\Hom(\mathrm{id},\phi)}\\
\Hom_{\mathbb F_p}(\Gr^r_{\log}G_K,\mathbb F_p)\ar[r]_-(0.5){\mathrm{rsw}}&\Hom_{\overline F}(\fm^{r}_{\overline K}/\fm^{r+}_{\overline K},\Omega^1_F(\log)\otimes_{F}\overline F)}
\end{equation}
\end{proposition}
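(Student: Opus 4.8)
The plan is to unwind, from Abbes--Saito's ramification theory, the constructions of the characteristic form $\mathrm{char}$ and of the refined Swan conductor $\mathrm{rsw}$, and to exhibit both as instances of a single construction, so that the compatibility asserted by the diagram becomes a functoriality statement. Recall from \cite[Corollary 2.28]{wr} that $\mathrm{char}$ (resp.\ $\mathrm{rsw}$) factors through a canonical injection of $\Gr^rG_K$ (resp.\ of $\Gr^r_{\log}G_K$) into $\Hom_{\overline F}(\fm^r_{\overline K}/\fm^{r+}_{\overline K},\Omega^1_{\sO_K}\otimes_{\sO_K}\overline F)$ (resp.\ into $\Hom_{\overline F}(\fm^r_{\overline K}/\fm^{r+}_{\overline K},\Omega^1_F(\log)\otimes_F\overline F)$), which is obtained by analysing the action of the graded piece on the $r$-th (resp.\ logarithmic $r$-th) tubular neighbourhood used in \cite{as1} to define the filtration, near the closed fibre. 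I would set up these two constructions in strict parallel; the only difference between them is that the non-logarithmic one produces a class in $\Omega^1_{\sO_K}\otimes_{\sO_K}\overline F$ whereas the logarithmic one produces a class in $\Omega^1_{\sO_K}(\log)\otimes_{\sO_K}\overline F\cong\Omega^1_F(\log)\otimes_F\overline F$, where $\Omega^1_{\sO_K}(\log)$ is the module of logarithmic differentials of the trait $\Spec(\sO_K)$. This is \cite[Lemma 5.13]{as2}, and I would follow the same line.

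The key step is to match the arrow $\theta_r$ --- equivalently its transpose $\theta^\vee_r$, which is precomposition with $\theta_r$ --- with the comparison between the two families of tubular neighbourhoods. That comparison induces on fundamental groups exactly the inclusion $G^r_{K,\log}\subseteq G^r_K$, together with $G^{r+}_{K,\log}\subseteq G^{r+}_K$ (which follows from $G^s_{K,\log}\subseteq G^s_K$ for all $s$, see Proposition \ref{lnlcomp}); and it induces on the ambient cotangent data the canonical map $\Omega^1_{\sO_K}\to\Omega^1_{\sO_K}(\log)$. After tensoring with $\overline F$, this map kills $d\pi$ --- since $d\pi$ becomes divisible by the uniformiser $\pi$ inside $\Omega^1_{\sO_K}(\log)$, hence vanishes modulo $\fm_K$ --- and sends $da$ to $d\bar a$ for $a\in\sO_K$; so it is precisely the map $\phi$ of the statement, the composite of the projection onto $\Omega^1_F$ and the inclusion $\iota$. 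Once this is granted, the commutativity of the square is just the functoriality of the construction recalled above.

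The main obstacle is exactly that matching: verifying that the inclusion of the two ramification filtrations is induced, on the linear-algebra data feeding $\mathrm{char}$ and $\mathrm{rsw}$, by the tautological map $\Omega^1_{\sO_K}\to\Omega^1_{\sO_K}(\log)$. This requires careful bookkeeping in the definitions of the tubular neighbourhoods in \cite{as1} (or, in Saito's reformulation \cite{wr}, of the relevant dilatations and their tangent spaces) and in the identification $\Omega^1_{\sO_K}(\log)\otimes_{\sO_K}\overline F\cong\Omega^1_F(\log)\otimes_F\overline F$. An alternative that sidesteps it: since we work with $\Hom(-,\mathbb F_p)$ and the graded pieces are killed by $p$, nothing is lost by checking the identity on each $\chi\in\Hom_{\mathbb F_p}(\Gr^rG_K,\mathbb F_p)$ individually; one may then use the comparison of Abbes--Saito's invariants with the Brylinski--Kato invariants for degree one characters (as in \cite{barr} and \cite{katochar}) to reduce the statement to Kato's compatibility between his logarithmic and non-logarithmic refined conductors. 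The only delicate case is then a $\chi$ of non-logarithmic slope $r$ whose logarithmic slope equals $r-1$: there $\theta^\vee_r(\chi)=0$, so one must check $\phi(\mathrm{char}(\chi))=0$, i.e.\ that $\mathrm{char}(\chi)$ lies in the kernel of $\Omega^1_{\sO_K}\otimes_{\sO_K}\overline F\to\Omega^1_F\otimes_F\overline F$ (the ``$d\pi$-direction''); this follows by writing $\chi$ via Artin--Schreier--Witt theory with a reduced equation and computing the leading term of the differential of that equation.
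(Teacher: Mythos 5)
The paper does not prove Proposition \ref{diaglnl} at all: it is quoted directly from \cite[Lemma 5.13]{as2}, with no argument supplied. There is therefore no in-paper proof against which to check your proposal; the relevant comparison is with the source.

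Your first route is a faithful reconstruction of what Abbes and Saito do. One realises $\mathrm{char}$ and $\mathrm{rsw}$ as arising from the actions of $\Gr^rG_K$ and $\Gr^r_{\log}G_K$ on the cohomology of, respectively, the ordinary and the logarithmic $r$-th tubular neighbourhoods; the inclusion $G^r_{K,\log}\subseteq G^r_K$ together with $G^{r+}_{K,\log}\subseteq G^{r+}_K$ (which follows from $G^s_{K,\log}\subseteq G^s_K$ for $s>r$, as you note) is induced by the canonical morphism between those spaces; and on cotangent data that morphism is $\Omega^1_{\sO_K}\to\Omega^1_{\sO_K}(\log)$, which modulo $\fm_K$ is exactly $\phi$, because $d\pi$ goes to $\pi\,d\log\pi$ and hence dies in the closed fibre while $da$ for $a\in\sO_K^\times$ goes to $d\bar a$. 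Once this identification of the ambient comparison map with $\phi$ is in hand, the commutativity is functoriality of the construction, as you say. You are also honest about where the work lies: the ``careful bookkeeping'' you flag is precisely the content of \cite[\S5]{as2}, and a complete proof would have to reproduce it. Your alternative route through Brylinski--Kato is logically coherent but is a detour: it requires first establishing the equivalence between Abbes--Saito's invariants and Kato's for degree-one characters (as in \cite{barr,mla}), then invoking Kato's own compatibility; Abbes--Saito do not argue this way. Your treatment of the ``delicate case'' ($\theta^\vee_r(\chi)=0$, i.e.\ logarithmic slope $r-1$) is correct in the rank-one integer-slope setting and is the right thing to check on the second route, though on the first route it is subsumed by the functoriality and needs no separate argument.
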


\section{Singular supports and characteristic cycles}

\subsection{}\label{deftrans}
Let $X$ be a smooth $k$-scheme and $C$ a closed conical subset in $\bT^*X$.   Let $f:Y\rightarrow X$ be a morphism of smooth $k$-schemes, $y$ a point of $Y$ and $\bar y\rightarrow Y$ a geometric point above $y$. We say that $f:Y\rightarrow X$ is $C$-{\it transversal at} $y$ if $\ker(df_{\bar y})\bigcap (C\times_X\bar y)\subseteq\{0\}\subseteq \bT^*_{f(\bar y)}X$, where $df_{\bar y}:\bT^*_{f(\bar y)}X\rightarrow\bT^*_{\bar y}Y$ is the canonical map. We say that $f:Y\rightarrow X$ is $C$-{\it transversal}  if it is $C$-transversal at every point of $Y$. If $f:Y\rightarrow X$ is $C$-transversal, we define $f^\circ C$ to be the scheme theoretic image of $Y\times_XC$ in $\bT^*Y$ by the canonical map $df: Y\times_X\bT^*X\rightarrow \bT^*Y$. Notice that $df:Y\times_XC\rightarrow f^\circ C$ is finite and that $f^\circ C$ is also a closed conical subset of $\bT^*Y$ (\cite[Lemma 1.2]{bei}).
Let $g:X\rightarrow Z$ be a morphism of $k$-schemes, $x$ a point of $X$ and $\bar x\rightarrow X$ a geometric point above $x$. We say that $g:X\rightarrow Z$ is $C$-transversal at $x$ if $dg_{\bar x}^{-1}(C\times_X\bar x)\subseteq\{0\}\subseteq \bT^*_{g(\bar x)}Z$, where $dg_{\bar x}:\bT^*_{g(\bar x)}Z\rightarrow \bT^*_{\bar x}X$ is the canonical map. We say that $g:X\rightarrow Z$ is $C$-{\it transversal} if it is $C$-transversal at every point of $X$. Let $(g,f):Z\leftarrow Y\rightarrow X$ be a pair of morphisms of smooth $k$-schemes. We say that $(g,f)$ is $C$-{\it transversal} if $f:Y\rightarrow Z$ is $C$-transversal and $g:Y\rightarrow Z$ is $f^\circ C$-transversal.

\subsection{}\label{microsupp}
Let $X$ be a smooth $k$-scheme and $\sK$ a bounded complex of  sheaves of $\Lambda$-modules with constructible cohomologies. We say that $\sK$ is {\it micro-supported} on a closed conical subset $C$ of $\bT^*X$ if, for any $C$-transversal pair of morphisms $(g,f):Z\leftarrow Y\rightarrow X$ of smooth $k$-schemes, the morphism $g:Y\rightarrow Z$ is locally acyclic with respect to $f^*\sK$. If there exists a smallest closed conical subset of $\bT^*X$ on which $\sK$ is micro-supported, we call it the {\it singular support} of $\sK$ and denote it by $SS(\sK)$.

\begin{theorem}[{\cite[Theorem 1.3]{bei}}]\label{beimain}
Let $X$ be a smooth $k$-scheme and $\sK$ a bounded complex of  sheaves of $\Lambda$-modules with constructible cohomologies.  The singular support $SS(\sK)$ of $\sK$ exists. If $X$ is equidimensional, each irreducible component of $SS(\sK)$ has dimension $\dim_kX$.
\end{theorem}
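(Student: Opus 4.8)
The plan is to follow Beilinson's argument in \cite{bei}: one produces $SS(\sK)$ as an explicit closed conical subset of $\bT^{*}X$, proves its minimality essentially by construction, and controls its dimension by a generic pencil induction on $\dim_{k}X$.

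The first step is to reduce the micro-support condition to test pairs whose target is a smooth curve: $\sK$ is micro-supported on a closed conical $C$ if and only if, for every smooth $k$-scheme $Y$, every $C$-transversal $f\colon Y\rightarrow X$, and every $f^{\circ}C$-transversal $g\colon Y\rightarrow Z$ with $Z$ a smooth curve, the morphism $g$ is locally acyclic relative to $f^{*}\sK$. One implication is the definition; for the converse one handles a general $C$-transversal pair $(g,f)\colon Z\leftarrow Y\rightarrow X$ with $\dim Z>1$ by factoring $g$ through generic linear projections of $Z$ and invoking generic local acyclicity over the base, so that local acyclicity of $g$ follows from that of its curve-valued compositions. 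This curve reduction replaces the unmanageable family of all test pairs by the tractable data of functions $g\colon W\rightarrow\bA^{1}$ on \'etale neighbourhoods $h\colon W\rightarrow X$.

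Next, construct the candidate. For each such $(h,g)$ let $Z_{h,g}\subseteq W$ be the locus where $g$ fails to be locally acyclic relative to $h^{*}\sK$; this is a closed constructible subset, and at each point $w\in Z_{h,g}$ the differential gives a covector $dg(w)\in\bT^{*}_{h(w)}X$ under the identification $\bT^{*}W\cong W\times_{X}\bT^{*}X$. Let $C_{\sK}\subseteq\bT^{*}X$ be the closure of the union of all these covectors, closed up under the $\mathbb{G}_{m}$-action; it is a closed conical subset. Minimality is then immediate: if $\sK$ is micro-supported on a closed conical $C$ and some $w\in Z_{h,g}$ had $dg(w)\notin C$, then $dg(w)\neq 0$ and $g$ would be $h^{\circ}C$-transversal near $w$, hence locally acyclic there, contradicting $w\in Z_{h,g}$; since $C$ is closed, $C_{\sK}\subseteq C$. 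Conversely, by the curve reduction, $\sK$ is micro-supported on $C_{\sK}$: away from $C_{\sK}$ every curve-valued test morphism is transversal and therefore, by the very definition of $Z_{h,g}$, locally acyclic. Hence the singular support $SS(\sK)=C_{\sK}$ exists.

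The heart of the matter, and the step I expect to be the main obstacle, is the dimension estimate $\dim C_{\sK}\le\dim_{k}X$ for $X$ equidimensional. I would prove it by induction on $\dim_{k}X$ using hyperplane sections: after a Veronese re-embedding, one compares $SS$ of the restriction of $\sK$ to a generic hyperplane section with the intersection of $SS(\sK)$ against the corresponding linear conditions in $\bT^{*}X$, uses generic local acyclicity to bound the extra non-local-acyclicity directions that appear, and performs a dimension count in the incidence variety of the pencil---Beilinson's Radon transform computation---to close the induction. It is precisely here that constructibility of $\sK$ and finite type over a field are indispensable, and this is the genuinely hard input. Granting this bound, purity of the remaining components follows from the curve reduction together with the minimality just established: a component of $C_{\sK}$ of dimension $<\dim_{k}X$ would be avoided by every sufficiently generic curve-valued test and could therefore be deleted without destroying micro-support, contradicting minimality. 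Hence $SS(\sK)$ is equidimensional of dimension $\dim_{k}X$, which completes the proof.
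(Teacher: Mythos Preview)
The paper does not prove this theorem at all: it is stated with the citation \cite[Theorem 1.3]{bei} and no argument is given in the text, so there is nothing in the paper to compare your proposal against. Your sketch is an attempt to outline Beilinson's actual proof, not a proof the author supplies.

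As an outline of Beilinson's argument your proposal captures the right large-scale architecture---reduction to test functions, construction of a candidate conical set from non-local-acyclicity loci, and a Radon-transform/pencil argument for the dimension bound---but several steps are stated too loosely to stand as a proof. The ``curve reduction'' you invoke (that local acyclicity over an arbitrary smooth base follows from local acyclicity over all curve-valued compositions with generic linear projections) is not a triviality and in Beilinson's paper is handled differently, via the notion of weak singular support and its compatibility with the Radon transform rather than by a direct factoring argument. Likewise, your minimality step implicitly assumes that the candidate $C_{\sK}$ is already known to support $\sK$ in the micro-local sense, but the argument you give for this (``away from $C_{\sK}$ every curve-valued test morphism is transversal and therefore locally acyclic'') only shows that $C_{\sK}$ contains the bad covectors for \emph{individual} test functions, not that the full $C$-transversality condition in the definition of micro-support is met for arbitrary pairs $(g,f)$; closing that gap is exactly what Beilinson's Radon-transform machinery accomplishes. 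Finally, your purity argument at the end (deleting a small component contradicts minimality) is circular as written: minimality was established only relative to conical sets on which $\sK$ is micro-supported, and you have not shown that $C_{\sK}$ minus a low-dimensional component still has that property. If you want a self-contained write-up, you should follow Beilinson's actual structure: define the weak singular support, prove its behaviour under the Radon transform, and deduce both existence and equidimensionality simultaneously from that.
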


\subsection{}
In the following of this section, we assume that $k$ is perfect.

Let $X$ be a smooth $k$-scheme of equidimesnion $n$, $C$ a closed conical subset of $\bT^*X$ and $f:X\rightarrow \bA^1_k$ a morphism of $k$-schemes. We say that a closed point $x$ of $X$ is an {\it at most} $C$-{\it isolated characteristic point of} $f:X\rightarrow \bA^1_k$ if there exists an open neighborhood $V$ of $x\in X$ such that $f:V-\{x\}\rightarrow \bA^1_k$ is $C$-transversal. A non-zero vector of the fiber of $\bT^*\bA^1_k$ at the origin of $\bA^1_k$ induces a section $\theta_0:\bA^1_k\rightarrow \bT^*\bA^1_k$ of $\bT^*\bA^1_k$ and hence a section $\theta:X\rightarrow \bT^*\bA^1_k\times_{\bA^1_k}X$ of  $\bT^*\bA^1_k\times_{\bA^1_k}X$. The composition $df\circ\theta:X\rightarrow \bT^*X$ is a section of the bundle $\bT^*X$.

Assume that $C$ is of equidimension $n$. Let $x$ be a closed point of $X$ which is an at most $C$-isolated characteristic point of $f:X\rightarrow \bA^1_k$. For any $n$-cycle $A$ supported on $C$, there exists an open neighborhood $V$ of $x\in X$ such that the intersection $(A|_{\bT^*V},(df\circ\theta)(V))$ is a $0$-cycle supported on $\bT^*_xV$. Its degree is independent of the choice of the non-zero vector of the fiber of $\bT^*\bA^1_k$ at the origin of $\bA^1_k$ and we denote by $(A,df)_{\bT^*X,x}$ the intersection number.

\subsection{}
Let $X$ be a smooth $k$-scheme of equidimension $n$ and $\sK$ a bounded complex of sheaves of $\Lambda$-modules with constructible cohomologies. Deligne expected that there exists a unique $n$-cycle $A$ on $\bT^*X$ supported on $SS(\sK)$ satisfying the following formula of Milnor type:

For any \'etale morphism $g:W\rightarrow X$, any morphism $f:W\rightarrow \bA^1_k$, any at most $g^\circ(SS(\sK))$-isolated characteristic point $w\in W$ of $f:W\rightarrow\bA^1_k$ and any geometric point $\bar w$ above $w$, we have
\begin{equation*}
-\sum_i(-1)^i\dimtot(\rR^i\Phi_{\bar w}(g^*\sK,f))=(g^*A,df)_{\bT^*W,w}
\end{equation*}
where $\rR\Phi_{\bar w}(g^*\sK,f)$ denotes the vanishing cycle complex of $g^*\sK$ relative to $f:W\rightarrow \bA^1_k$, $\dimtot(\rR^i\Phi_{\bar w}(g^*\sK,f))$ the total dimension of the stalk $\rR^i\Phi_{\bar w}(g^*\sK,f)$ and $g^*A$ the pull-back of $A$ to $\bT^*W$. If the cycle $A$ above exists, we call it the {\it characteristic cycle} of $\sK$ and denote it by $CC(\sK)$.

\begin{theorem}[{\cite[Theorem 5.9]{cc}}]\label{ccmain}
Let $X$ be a smooth $k$-scheme of equidimension $n$ and $\sK$ a bounded complex of sheaves of $\Lambda$-modules with constructible cohomologies. The characteristic cycle $CC(\sK)$ exists.
\end{theorem}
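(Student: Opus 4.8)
\emph{Overall strategy and uniqueness.} The plan is to deduce existence from a uniqueness statement (which reduces the problem to an \'etale-local one), then to construct the cycle by restricting to generic curves where the answer is classical, and finally to verify the Milnor formula in general by combining a global index formula with a conservation-of-number argument; the ramification theory recalled above enters precisely to control the wild part of the vanishing cycles. First I would show that an $n$-cycle $A$ on $\bT^*X$ supported on $SS(\sK)$ and satisfying the Milnor formula is unique. By Theorem \ref{beimain} the irreducible components $C_1,\dots,C_m$ of $SS(\sK)$ all have dimension $n$, so $A=\sum_a m_a[C_a]$. For a fixed $a$, a general position argument (using that $C_a$ is conical of dimension $n$ inside the $2n$-dimensional $\bT^*X$) produces, after passing to an \'etale neighbourhood $g\colon W\to X$, a morphism $f\colon W\to\bA^1_k$ with a single at most $g^{\circ}(SS(\sK))$-isolated characteristic point $w$ at which the section $df\circ\theta$ meets $C_a$ transversally with multiplicity $1$ and avoids the other components; the Milnor formula
\begin{equation*}
-\sum_i(-1)^i\dimtot\bigl(\rR^i\Phi_{\bar w}(g^*\sK,f)\bigr)=(g^*A,df)_{\bT^*W,w}
\end{equation*}
then pins down $m_a$. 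Since uniqueness is \'etale-local, any \'etale-local construction of $CC(\sK)$ glues to a global $n$-cycle, so it suffices to work locally; using additivity of both sides of the Milnor formula along distinguished triangles together with the localization triangle, I would further reduce to the case $\sK=j_!\sF$ with $\sF$ locally constant and constructible on the complement of a divisor which, after blowing up (where the two sides behave well by proper base change), has simple normal crossings.

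\emph{The curve case and restriction to generic curves.} For $\dim X\le 1$ the cycle is forced: for a smooth curve $X$ and $\sF$ lisse on a dense open $U$ one must take
\begin{equation*}
CC(j_!\sF)=-\,\rk(\sF)\,[\bT^*_XX]-\sum_{x\notin U}\dimtot_{K_x}(\sF|_{\eta_x})\,[\bT^*_xX],
\end{equation*}
and the Milnor formula in this case is the Deligne--Laumon computation of the total dimension of vanishing cycles on a curve, written through the identity $\dimtot=\sw+\dim_{\Lambda}$ of \eqref{swdimtotcurve}. For general $X$, I would determine the multiplicity $m_a$ of the sought cycle along a component $C_a$ by restricting $\sK$ along a sufficiently generic smooth curve $h\colon Z\hookrightarrow X$ which is $SS(\sK)$-transversal away from finitely many points and whose conormal directions meet $C_a$: there $h^*\sK$ is micro-supported on the image of $Z\times_X SS(\sK)$ in $\bT^*Z$, the curve case gives $CC(h^*\sK)$ explicitly, and the expected pull-back compatibility $CC(h^*\sK)=h^!CC(\sK)$ expresses $m_a$ in terms of curve-level total dimensions. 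This defines a candidate cycle; one must then check that the resulting $m_a$ do not depend on the curves chosen.

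\emph{Verifying the Milnor formula in general.} The decisive step is to show that this candidate satisfies the Milnor formula for \emph{every} morphism $f$ to $\bA^1_k$ and every at most $SS(\sK)$-isolated characteristic point, not merely for the generic curves used to construct it. I would use, on the one hand, a global index formula $\chi(X,\sK)=(CC(\sK),\bT^*_XX)_{\bT^*X}$ for proper $X$, obtained by integrating the Milnor formula over a generic Lefschetz pencil (a fibration $\widetilde X\to\mathbb P^1$ after blowing up the axis) and applying Grothendieck--Ogg--Shafarevich on $\mathbb P^1$; and, on the other hand, a conservation-of-number argument: embed a given $f$ in a family $\{f_t\}$ degenerating to such a generic pencil and compare, across the family, the intersection number $(CC(\sK),df_t)_{\bT^*X}$ (deformation-invariant by properness of the relevant intersection) with the alternating sum of total dimensions of vanishing cycles (controlled through the nearby-cycle formalism). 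Matching the two sides rests on understanding $\dimtot$ of the vanishing cycles: the slope decomposition \eqref{slopedecom} and the center-character decomposition \eqref{ccdecomp} reduce the computation to rank-$1$ pieces, and for each such piece the characteristic form \eqref{charform} produces exactly the cotangent vector cutting out the relevant component of $SS(\sK)$, with the Hasse--Arf integrality of Theorem \ref{xlmain} ensuring that the resulting multiplicities are integers.

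\emph{Main obstacle.} I expect the crux to be the conservation-of-number step: controlling how the wild part of the vanishing cycles of $g^*\sK$ varies as the test function deforms, i.e.\ proving that the alternating sum of total dimensions of vanishing cycles is additive and compatible with the conical geometry of $SS(\sK)$. This is exactly where the full strength of Abbes--Saito's logarithmic and non-logarithmic ramification theory -- the characteristic form \eqref{charform}, the refined Swan conductor, and the Hasse--Arf theorem \ref{xlmain} -- is needed: not merely to define the wild contributions, but to know that they assemble into an honest $n$-cycle supported on $SS(\sK)$ satisfying the Milnor formula universally.
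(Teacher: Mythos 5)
The paper does not prove this theorem: it is stated as \cite[Theorem 5.9]{cc} and quoted without proof, so there is no ``paper's own proof'' to compare against. What can be said is whether your outline captures the actual construction in Saito's paper, and there is a genuine mismatch.

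The central step in your sketch---defining the multiplicity $m_a$ of the cycle along a component $C_a$ of $SS(\sK)$ by restricting $\sK$ along generic curves and reading off curve-level total dimensions---is essentially circular. You acknowledge the need to ``check that the resulting $m_a$ do not depend on the curves chosen,'' but that independence is equivalent to the Milnor formula you are trying to establish, and you offer no mechanism that breaks the circle. In Saito's construction (recalled in \ref{wrccss} of the present paper, formula \eqref{wrcbcc}), the characteristic cycle is \emph{not} defined by restriction to curves; it is defined directly, via the center-character decomposition \eqref{ccdecomp} and the characteristic form \eqref{charform}, each nontrivial character $\chi$ contributing a line $L_{\chi}$ in $\bT^*X$ and a multiplicity $r\cdot\dim_\Lambda M^{(r)}_\chi/[F_{\chi}:F]$ under a non-degeneracy hypothesis one arranges by shrinking. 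Only \emph{after} $CC$ has been defined this way does compatibility with restriction to $SS$-transversal curves (Proposition \ref{saitomain}, i.e.\ \cite[Corollary 3.9.2]{wr}) become a theorem. Your outline inverts the logical order.

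The ``conservation of number / Lefschetz pencil'' paragraph also overstates what that device can deliver on its own. Integrating the Milnor formula over a pencil gives a \emph{global} index identity; it does not by itself pin down local intersection multiplicities of $CC(\sK)$ with an arbitrary section $df$, and passing from global to local requires the substantial dévissage in \cite{cc} (reduction to $j_!\sF$ on an SNC complement, non-degenerate ramification after shrinking, compatibility with proper push-forward, a delicate continuity argument for isolated characteristic points). Finally, the claim that Hasse--Arf integrality (Theorem \ref{xlmain}) ensures ``that the resulting multiplicities are integers'' is not what is needed: the multiplicities in \eqref{wrcbcc} involve the factor $1/[F_\chi:F]$ and their integrality is part of what the Milnor formula proves, not an input. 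So while the uniqueness step, the curve base case via Deligne--Laumon, and the appeal to the vanishing-cycle formalism are all in the right spirit, the core construction is not the one in \cite{cc}, and the gap you flag as the ``main obstacle'' is in fact where the argument as written breaks down.
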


\subsection{}
We take the notation and assumptions of Theorem \ref{ccmain}. If each cohomology of $\sK$ is locally constant, we have
\begin{equation*}
CC(\sK)=\sum_i(-1)^{i+n}(\rk_{\Lambda}\mathcal H^i(\sK))\cdot[\bT^*_XX].
\end{equation*}
If $\sK$ is a perverse sheaf on $X$, then the support of $CC(\sK)$ is $SS(\sK)$ and each coefficient of $CC(\sK)$ is a positive integer (\cite[Proposition 5.14]{cc}).
Let $Y$ be a connected and smooth $k$-curve, $E$ a Cartier divisor on $Y$, $V$ the complement of $E$ in $Y$, $h:V\rightarrow Y$ the canonical injection and $\sG$ a locally constant and constructible sheaf of $\Lambda$-modules on $V$. We have
\begin{equation*}
CC(h_!\sG)=-\rk_{\Lambda}\sG\cdot[\bT^*_YY]-\sum_{y\in E}\dimtot_y(h_!\sG)\cdot[\bT^*_yY].
\end{equation*}

\subsection{}\label{wrccss}
Let $X$ be a smooth $k$-scheme of equidimension $n$ and $D$ an integral Cartier divisor on $X$, $U$ the complement of $D$ in $X$, $j:U\rightarrow X$ the canonical injection and $\sF$ a locally constant and constructible sheaf of $\Lambda$-modules on $U$. We denote by $\xi$ be the generic point of $D$, by $\sO_K$ the henselization of $\sO_{X,\xi}$, by $F$ the residue field of $\sO_{X,\xi}$, by $K$ the fraction field of $\sO_K$, by $\eta=\Spec(K)$ the generic point of $S=\Spec(\sO_K)$, by $\overline K$ a separable closure of $K$, by $\sO_{\overline K}$ the integral closure of $\sO_K$ in $\overline K$, by $\overline F$ the residue field of $\sO_{\overline K}$, by $G_K$ the Galois group of $\overline K/K$ and by $M$ the finite generated $\Lambda$-module with continuous $G_K$-action corresponding to $\sF|_{\eta}$. After enlarging $\Lambda$, we may assume that it contains primitive $p$-th roots of unity. We have the slope decomposition and center character decompositions
\begin{equation*}
M=\bigoplus_{r\in\mathbb Q\geq 1}M^{(r)}\ \ \ \textrm{and}\ \ \ M^{(r)}=\bigoplus_{\chi\in X(r)}M^{(r)}_\chi\ \ \textrm{(for}\; r>1).
\end{equation*}
We fix a non-trivial character $\psi:\mathbb F_p\rightarrow \Lambda^{\times}$. For each $r>1$, the graded piece $\Gr^r G_K$ is abelian and killed by $p$. Each $\chi$ factors uniquely as $\Gr^r G_K\rightarrow \mathbb F_p\xrightarrow{\psi}\Lambda^{\times}$. We denote also by $\chi$ the induced character and by
\begin{equation*}
\mathrm{char}(\chi): \fm^r_{\overline K}/\fm^{r+}_{\overline K}\rightarrow \Omega^1_{\sO_{X,\xi}}\otimes_{\sO_{X,\xi}}\overline{F}
\end{equation*}
the characteristic form of $\chi$ \eqref{charform}. Let $F_{\chi}$ be a field of definition of $\mathrm{char}(\chi)$ which is a finite extension of $F$ contained in $\overline F$. The characteristic form $\mathrm{char}(\chi)$ defines a line $L_\chi$ in $\bT^*X\times_X\Spec(F_{\chi})$. Let $\overline L_{\chi}$ be the closure of the image of $L_{\chi}$ in $\bT^*X$. After removing a closed subsecheme $Z\subseteq D$ of codimension 2 in $X$, we may assume that $D_0=D-Z$ is smooth over $\Spec(k)$ and the ramification of $\sF$ along $D_0$ is non-degenerate (\cite[Definition 3.1]{wr}). Roughly speaking, the ramification of an \'etale sheaf along a smooth divisor is non-degenerate if its ramification along the divisor is controlled by its ramification at generic points of the divisor. Using Abbes and Saito's ramification theory, the characteristic cycle of $j_!\sF$ on $X_0=X-Z$ can be computed explicitly as follows (\cite[Definition 3.5]{wr} and \cite[Theorem 7.14]{cc})
\begin{equation}\label{wrcbcc}
CC(j_!\sF|_{X_0})=(-1)^n\left(\rk_{\Lambda}\sF\cdot[\bT^*_{X_0}X_0]+\dim_{\Lambda}M^{(1)}\cdot[\bT^*_{D_0}X_0]+\sum_{r>1}\sum_{\chi\in X(r)}\frac{r\cdot\dim_{\Lambda}M^{(r)}_{\chi}}{[F_{\chi}:F]}[\overline L_{\chi}]\right).
\end{equation}
The singular support $SS(j_!\sF|_{X_0})$ is the support of $CC(j_!\sF|_{X_0})$ (\cite[Proposition 3.15]{wr}). Observe that, for any point $x$ in $D_0$, the fiber $SS(j_!\sF)\times_Xx\subseteq \bT^*_xX$ is a finite union of $1$-dimensional $k(x)$-vector spaces.

\section{Total dimension divisors and Swan divisors}

\subsection{}\label{mainnotation}
In this section, let $X$ be a smooth $k$-scheme, $D$ a reduced effective Cartier divisor on $X$, $\{D_i\}_{i\in I}$ the set of irreducible components of $D$, $U$ the complement of $D$ in $X$, $j:U\rightarrow X$ the canonical injection. We assume that each $D_i$ ($i\in I$) is generically smooth over $\Spec(k)$.  Let $\mathscr F$ be a locally constant and constructible sheaf of $\Lambda$-modules on $U$.

\subsection{}
Let $\bar k$ be an algebraic closure of $k$, $X_{\bar k}=X\times_k\bar k$, $\xi_{i}$ the generic point of an irreducible component of $D_{i,\bar k}=D_i\times_{k}\bar k$, $\eta_{i}$ the generic point of the henselization $X_{\bar k,(\xi_{i})}$, $K_{i}$ the function field of  $X_{\bar k,(\xi_{i})}$ and $\overline K_i$ a separable closure of $K_i$. The restriction $\mathscr F|_{\eta_i}$ corresponds to a finitely generated $\Lambda$-module with a continuous $\mathrm{Gal}(\overline K_i/K_i)$-action.
Since the $\mathrm{Gal}(\bar k/ k)$-action on the set of irreducible components of $D_{i,\bar k}$ is transitive, the total dimension $\mathrm{dimtot}_{K_{i}}(\mathscr F|_{\eta_{i}})$ and the Swan conductor $\sw_{K_i}(\mathscr F|_{\eta_i})$ do not depend on the choice of $\bar k$ nor on the choice of the irreducible component of $D_{i,\bar k}$. We define the {\it total dimension divisor} of $j_!\mathscr F$ on $X$ and denote by $\mathrm{DT}_X(j_!\mathscr F)$ the Cartier divisor:
\begin{equation}\label{dtdiv}
\mathrm{DT}_X(j_!\mathscr F)=\sum_i \mathrm{dimtot}_{K_{i}} (\mathscr F|_{\eta_{i}})\cdot D_i.
\end{equation}
We define the {\it Swan divisor} of $j_!\mathscr F$ on $X$ and denote by $\SW_X(j_!\sF)$ the Cartier divisor:
\begin{equation}\label{swdiv}
\mathrm{SW}_X(j_!\mathscr F)=\sum_i \mathrm{sw}_{K_{i}} (\mathscr F|_{\eta_{i}})\cdot D_i.
\end{equation}
We define the {\it logarthimic total dimension divisor} of $j_!\sF$ and denote by $\LDT_X(j_!\sF)$ the divisor:
\begin{equation}\label{ldtdiv}
\LDT_X(j_!\sF)=\sum_i \left(\mathrm{sw}_{K_{i}}(\mathscr F|_{\eta_{i}})+\dim_{\Lambda}\sF\right)\cdot D_i.
\end{equation}
They have integral coefficients (Theorem \ref{xlmain}). By \eqref{swdimtotcurve}, if $X$ is $k$-curve, we have $\DT_X(j_!\sF)=\LDT_X(j_!\sF)$.

Let $k'$ be an extension of $k$ contained in $\bar k$, and $\pi:X'=X\times_kk'\rightarrow X$ the canonical projection. By definition, we have
\begin{align}
\pi^*(\mathrm{DT}_X(j_!\mathscr F))&=\mathrm{DT}_{X'}(\pi^*(j_!\mathscr F)),\label{dimtotbc}\\
\pi^*(\mathrm{SW}_X(j_!\mathscr F))&=\mathrm{SW}_{X'}(\pi^*(j_!\mathscr F)).\label{swbc}
\end{align}
In the following, we denote by $\dimtot_{D_i}(j_!\sF)$ (resp. $\sw_{D_i}(j_!\sF)$) the coefficient of $D_i$ in $\DT_X(j_!\sF)$ (resp. $\SW_X(j_!\sF)$) for simplicity (\eqref{dtdiv} and \eqref{swdiv}).

\begin{proposition}[{\cite[Proposition 3.8]{wr} and \cite[Lemma 1.22]{saito cc}}]\label{smpullback}
For any smooth morphism $f:Y\rightarrow X$, we have
\begin{align}
f^*(\DT_X(j_!\sF))&=\DT_Y(f^*j_!\sF),\label{dimtotspb}\\
f^*(\SW_X(j_!\sF))&=\SW_Y(f^*j_!\sF).\label{swspb}
\end{align}
\end{proposition}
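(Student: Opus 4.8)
\emph{Proof proposal.} The plan is to reduce the two identities to a statement at the generic points of the components of $f^{-1}(D)$ and then to exploit the Zariski-local structure of smooth morphisms. First I would observe that, since $f$ is smooth and $D$ is reduced, $f^{-1}(D)=D\times_XY$ is again a reduced effective Cartier divisor, each of whose irreducible components is generically smooth over $\Spec(k)$ and maps to the generic point of a unique component $D_i$ of $D$, and that $f^*D_i$ is the reduced sum of those components $D'$ of $f^{-1}(D)$ lying over $D_i$. Hence $f^*\DT_X(j_!\sF)$ and $\DT_Y(f^*j_!\sF)$ (and likewise the two Swan divisors) are supported on $f^{-1}(D)$, and \eqref{dimtotspb}--\eqref{swspb} are equivalent to the assertion that, for every such $D'$,
\begin{equation*}
\dimtot_{K'}\big((f^*\sF)|_{\eta'}\big)=\dimtot_{K_i}(\sF|_{\eta_i})\quad\text{and}\quad \sw_{K'}\big((f^*\sF)|_{\eta'}\big)=\sw_{K_i}(\sF|_{\eta_i}),
\end{equation*}
where $K'$, $\eta'$ are attached to $D'\subseteq Y$ exactly as $K_i$, $\eta_i$ were attached to $D_i\subseteq X$ in \ref{mainnotation}. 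This is local on $Y$ around the generic point of $D'$; and a smooth morphism is, Zariski-locally on the source, a composition of an \'etale morphism with a projection $\mathbb A^N_X\to X$, which factors further into one-variable projections $\mathbb A^1_{(-)}\to(-)$. So it suffices to treat two cases: (a) $f$ \'etale, and (b) $f$ the projection $\mathbb A^1_X\to X$.

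In case (a), after base change to $\bar k$ the local homomorphism $\sO_{X_{\bar k},\xi_i}\to\sO_{Y_{\bar k},\xi'}$ is essentially \'etale, hence carries a uniformizer to a uniformizer and induces a finite separable residue extension; therefore $K'/K_i$ is a finite unramified extension (ramification index $1$). By Proposition~\ref{nonlogext} and Proposition~\ref{logext} with $e=1$ one gets $G^r_{K'}=G^r_{K_i}$ and $G^r_{K',\log}=G^r_{K_i,\log}$ for all $r$, together with $P_{K'}=P_{K_i}$, so that $\sF|_{\eta_i}$ and $(f^*\sF)|_{\eta'}$ have identical slope and logarithmic slope decompositions and both equalities follow; alternatively one may quote \eqref{unramdimtot} and the $e=1$ instance of \eqref{tamesw} in Proposition~\ref{unramtame}.

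Case (b) is the crux. Over $\bar k$, the local field attached to the component $D_i\times\mathbb A^1$ is the fraction field $K'$ of the henselization of $\sO_{K_i}[t]$ at the prime $\pi\,\sO_{K_i}[t]$ ($\pi$ a uniformizer of $\sO_{K_i}$): an extension of $K_i$ of ramification index $1$ whose residue field is the rational function field $F_i(t)$ over the residue field $F_i$ of $\sO_{K_i}$; and $(f^*\sF)|_{\eta'}$ is $M:=\sF|_{\eta_i}$, regarded as a $G_{K'}$-module via $G_{K'}\to G_{K_i}$. What must be proved is the invariance of $\dimtot$ and $\sw$ under this ``Gauss extension'', i.e.\ that the slopes and logarithmic slopes of $M$ are unchanged. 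I would attack this by reducing, via additivity of $\dimtot$ and $\sw$ and an enlargement of $\Lambda$, to an isoclinic (resp.\ logarithmically isoclinic) module and its center-character decomposition, and then---in the equal-characteristic-$p$ situation at hand, with $F_i$ of finite type over the perfect field $\bar k$---comparing characteristic forms \eqref{charform} and refined Swan conductors: the base-change map $\Omega^1_{\sO_{K_i}}\otimes_{\sO_{K_i}}\overline{F_i}\to\Omega^1_{\sO_{K'}}\otimes_{\sO_{K'}}\overline{F'}$ and its logarithmic analogue are split injective (the differential $dt$ is a new basis vector), and the order along $\pi$ that measures the slope is unaffected because $e=1$, which together with injectivity of the characteristic form prevents the slope from dropping. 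For the $\dimtot$-part one can instead invoke the compatibility of the characteristic cycle with smooth pull-back together with the explicit formula \eqref{wrcbcc}---non-degenerate ramification being preserved under smooth pull-back---and read off the coefficient of $D'$ in $\DT_Y(f^*j_!\sF)$ from the multiplicity of $CC(f^*j_!\sF)$ in the fibre over the generic point of $D'$. Granting case (b), cases (a) and (b) combine along a local factorization of $f$ to yield the displayed equalities for every $D'$, hence \eqref{dimtotspb} and \eqref{swspb}.

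The genuine obstacle is this Gauss-extension invariance in case (b): Propositions~\ref{nonlogext} and \ref{logext} are available only for \emph{finite} extensions, whereas $K'/K_i$ is infinite and admits no nontrivial finite sub-extension (its residue extension $F_i(t)/F_i$ being rational), so establishing that the slopes cannot rise---equivalently, a matching upper bound such as $G^r_{K'}\subseteq G^r_{K_i}$---requires working directly with Abbes and Saito's geometric construction of the filtration and is the only non-formal point. This is precisely what is carried out in \cite[Proposition~3.8]{wr} and \cite[Lemma~1.22]{saito cc}, which may simply be quoted.
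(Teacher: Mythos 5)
Your proposal tracks the paper's own proof closely: both prove \eqref{dimtotspb} by quoting \cite[Proposition~3.8]{wr}, and both reduce \eqref{swspb} to the \'etale and affine-space-projection cases and then appeal to \cite[Lemma~1.22]{saito cc} for the latter. One small observation: the paper settles the projection case $\bA^m_X\to X$ in one stroke by invoking the surjectivity of the induced map $\bar\phi^r:G^r_{L,\log}/G^{r+}_{L,\log}\to G^r_{K,\log}/G^{r+}_{K,\log}$ on graded pieces, which simultaneously gives both inequalities between logarithmic slopes, whereas your separation into a refined-Swan-conductor argument for the ``cannot drop'' direction and a citation for the ``cannot rise'' direction risks circularity (comparing refined Swan conductors of $M$ over $K$ and over the Gauss extension already presupposes the compatibility of the graded pieces that you are trying to establish); but since you correctly isolate \cite[Lemma~1.22]{saito cc} and \cite[Proposition~3.8]{wr} as supplying the non-formal content, the end result is the same.
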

\begin{proof}
The equality \eqref{dimtotspb} is a part of \cite[Proposition 3.8]{wr}. We need to show \eqref{swspb}. It holds for \'etale morphism by definition.
Hence, it suffices to treat the case where $Y=\bA^m_X$ for an integer $m\geq 1$. By \eqref{swbc}, we may assume that $k$ is algebraically closed. After replacing $X$ by Zariski neighborhoods of generic points of $D$, we may assume that $D$ is irreducible. We denote by $\xi$ the generic point of $D$, $K$ the fraction field of the localization of $X$ at $\xi$, by $\overline K$ a separable closure of $K$, by $G_K$ the Galois group of $\overline K$ over $K$,
by $\zeta$ the generic points of $\bA^m_D=Y\times_XD$,  by $L$ the fraction field of the localization of $Y$ at $\zeta$, by $\overline L$ a separable closure of $L$ that contains $\overline K$ and by $G_L$ the Galois group of $\overline L$ over $L$. We have a canonical group homomorphism $\phi:G_L\rightarrow G_K$. Since $f:\bA^m_X\rightarrow X$ is smooth, the ramification index of $L/K$ is $1$. Hence, $\phi$ induces a homomorphism of subgroups $\phi^r:G^r_{L,\log}\rightarrow G^r_{K,\log}$ for any $r\in \bQ_{\geq 0}$. By \cite[Lemma 1.22]{saito cc}, the induced map
\begin{equation*}
\bar \phi^r:G^r_{L,\log}/G^{r+}_{L,\log}\rightarrow G^r_{K,\log}/G^{r+}_{K,\log}
\end{equation*}
is surjective for any $r\in \bQ_{\geq 0}$. Hence, the logarithmic slope decompositions of $\sF|_{\Spec(K)}$ and $\sF|_{\Spec(L)}$ are identical. Hence, we have
\begin{equation*}
\sw_D(j_!\sF)=\sw_{\bA^m_D}(f^*(j_!\sF)).
\end{equation*}
The equality \eqref{swspb} is obtained.
\end{proof}

\subsection{}
In the following of this section, we assume that $k$ is perfect and $X$ is of equidimension $n$.

\begin{proposition}[{cf. \cite[Theorem 4.2]{HY}}]\label{hymain}
Let $Z$ be a smooth $k$-scheme and $h:Z\rightarrow X$ a $k$-morphism. We assume that $h^*D=Z\times_XD$ is an effective Cartier divisor on $Z$. Then, we have
\begin{equation}\label{DTineq}
h^*(\DT_X(j_!\sF))\geq\DT_Z(h^*(j_!\sF)).
\end{equation}
\end{proposition}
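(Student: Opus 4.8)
The plan is to reduce \eqref{DTineq} to a comparison of multiplicities along the irreducible components of $h^*D$, to dispose of the case where a component of $h^*D$ dominates a component of $D$ by the elementary Proposition \ref{unramtame}, and to treat the remaining, higher–codimensional case by means of the characteristic cycle. Since both $h^*(\DT_X(j_!\sF))$ and $\DT_Z(h^*(j_!\sF))$ are Cartier divisors on $Z$ supported on $h^*D$, it is enough to compare their coefficients at the generic point $\xi'$ of each irreducible component $D'$ of $h^*D$; write $z=h(\xi')$, a point of $D$. By \eqref{dimtotbc} we may assume $k$ algebraically closed. Two cases arise, according to whether or not $z$ is the generic point of a component of $D$.

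In the first case $z$ is the generic point $\xi_i$ of some component $D_i$ of $D$; shrinking $X$ around $\xi_i$ we may assume $D=D_i$ is irreducible, so $\DT_X(j_!\sF)=\dimtot_{K_i}(\sF|_{\eta_i})\cdot D_i$. Then $h$ induces a local homomorphism $\sO_{K_i}\to\sO_{L'}$ of henselian discrete valuation rings, $L'$ being the fraction field of the henselization of $\mathcal O_{Z,\xi'}$, and its ramification index $e$ equals the coefficient of $D'$ in $h^*D_i=h^*D$; hence the coefficient of $D'$ in $h^*(\DT_X(j_!\sF))$ equals $e\cdot\dimtot_{K_i}(\sF|_{\eta_i})$. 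On the other hand $(h^*\sF)|_{\xi'}$ is $M:=\sF|_{\eta_i}$ with $G_{L'}$ acting through $G_{L'}\to G_{K_i}$, so Proposition \ref{unramtame}—extended in the standard way to the possibly non–finite extension $L'/K_i$ (the action on $M$ factoring through a finite quotient)—gives $\dimtot_{L'}M\le e\cdot\dimtot_{K_i}M$. As $\dimtot_{L'}M$ is the coefficient of $D'$ in $\DT_Z(h^*(j_!\sF))$, this proves \eqref{DTineq} at $\xi'$ in this case.

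In the remaining case $z$ has codimension $\geq2$ in $X$, and this is the substantial part. Factoring $h$ through its graph $Z\xrightarrow{\Gamma_h}Z\times_kX\xrightarrow{\mathrm{pr}_2}X$ and using Proposition \ref{smpullback} for the smooth projection $\mathrm{pr}_2$, one reduces to the case where $h$ is a closed immersion of smooth $k$-schemes; and since, locally around $\xi'$, such an immersion is a finite composite of closed immersions into smooth divisors—each successive restriction of $D$ staying Cartier, as an intermediate restriction could fail to be so only if the subscheme lay inside the divisor, which would force $h^*D$ to equal $Z$—one reduces further to $h\colon Z\hookrightarrow X$ a smooth divisor, with $D$ irreducible after a shrinking. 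Here I would invoke the characteristic cycle theory: after removing a closed subset of codimension $\geq2$ from $X$ one may assume $D$ smooth and the ramification of $\sF$ along $D$ non-degenerate, so $CC(j_!\sF)$ is the explicit cycle \eqref{wrcbcc} and each fibre of $SS(j_!\sF)$ over $D$ is a finite union of lines. Choosing a general closed point $x'$ of $D'$ and a general curve through it transverse to $D'$—so that its restriction reads off the coefficient $\dimtot_{D'}(h^*j_!\sF)$ up to a bounded correction in the right direction—one computes the associated local total dimension by the Milnor formula, realizes it as an intersection number built from $CC(j_!\sF)$, and bounds this number by $\sum_i\dimtot_{D_i}(j_!\sF)\cdot m_{\xi'}(h^*D_i)$, the coefficient of $D'$ in $h^*(\DT_X(j_!\sF))$, using the shape of $SS(j_!\sF)$ near $D$ and the non-negativity of the coefficients in \eqref{wrcbcc}. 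The subcase where $D'$ sits inside the degenerate locus of $\sF$ along $D$—a codimension-$\geq2$ subset of $X$ that $D'$, itself of codimension $2$, may well meet—is treated by perturbing the auxiliary curve and passing to the limit, using properness and upper semicontinuity of the intersection pairing.

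The main obstacle is exactly this last case: over a deep, possibly degenerate, point of $D$ the Abbes--Saito residue–field formalism no longer controls the ramification of $h^*j_!\sF$, and one genuinely needs Saito's characteristic cycle, its explicit form \eqref{wrcbcc} near $D$, and the attendant transversality and intersection–theoretic arguments. This is precisely the content of \cite[Theorem 4.2]{HY}, which one may also simply quote.
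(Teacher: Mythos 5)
Your proposal, in the end, rests on the very same appeal as the paper: after the base change via \eqref{dimtotbc}, one quotes \cite[Theorem 4.2]{HY}. In fact the paper's proof consists of nothing more than this reduction plus the citation, together with the small but necessary observation that a Cartier divisor on $Z$ is effective if and only if its pull-back to $Z_{\bar k}$ is effective (which you gloss over with ``we may assume $k$ algebraically closed''). Most of your text is instead an attempted reconstruction of the cited theorem itself: the case split on whether $z=h(\xi')$ is a generic point of a component of $D$ or sits in codimension $\geq 2$, the reduction through the graph and then through a chain of smooth divisors, and the characteristic-cycle estimate. This reconstruction is consonant in spirit with the argument of \cite{HY}, but several steps are left as gestures rather than proofs: Proposition \ref{unramtame} as stated applies to \emph{finite} separable extensions, whereas $L'/K_i$ at $\xi'$ is typically a non-algebraic extension of complete discretely valued fields, so one really needs the corresponding inclusion of ramification filtrations for arbitrary extensions of henselian DVRs (as in \cite[Prop.~2.22]{wr}); the induction through intermediate smooth divisors needs one to check that the Cartier and generic-smoothness hypotheses propagate at each stage; and the perturbation argument near the degenerate locus is asserted rather than carried out. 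None of this harms the correctness of your proposal, because you explicitly allow yourself to discharge the substantial case by citing \cite[Theorem 4.2]{HY} --- which is exactly, and only, what the paper does.
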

\begin{proof}
Theorem 4.2 of \cite{HY} proved \eqref{DTineq} under the condition that the base field of $X$ is algebraically closed. Here, let $\bar k$ be an algebraic closure of $k$, $X_{\bar k}=X\times_k\bar k$, $Z_{\bar k}=Z\times_k\bar k$, $\pi:X_{\bar k}\rightarrow X$ and $\pi':Z_{\bar k}\rightarrow Z$ canonical projections and $h':Z_{\bar k}\rightarrow X_{\bar k}$ the base change of $h:Z\rightarrow X$. Notice that $h\circ\pi'=\pi\circ h'$.
We have
\begin{equation}\label{bcalgclosed}
\pi^*(\DT_X(j_!\sF))=\DT_{X_{\bar k}}(\pi^*(j_!\sF))\ \ \ \textrm{and}\ \ \ \pi'^*(\DT_Z(h^*(j_!\sF)))=\DT_{Z_{\bar k}}(\pi'^*(h^*(j_!\sF))).
\end{equation}
By \cite[Theorem 4.2]{HY}, we have
\begin{equation}\label{hytheorem}
h'^*(\DT_{X_{\bar k}}(\pi^*(j_!\sF)))\geq\DT_{Z_{\bar k}}(\pi'^*(h^*(j_!\sF))).
\end{equation}
By \eqref{bcalgclosed} and \eqref{hytheorem}, we get
\begin{equation*}
\pi'^*(h^*(\DT_X(j_!\sF)))\geq \pi'^*(\DT_Z(h^*(j_!\sF))).
\end{equation*}
It implies that \eqref{DTineq} holds since a Cartier divisor $E$ on $Z$ is effective if and only if $E_{\bar k}=E\times_k\bar k$ is effective on $Z_{\bar k}$.
\end{proof}

This inequality generalizes a similar result due to Saito which requires more geometric and ramification conditions (\cite[Corollary 3.9.1]{wr}). Earlier than Saito's result, Matsuda obtained it for the case where $Z$ is a smooth $k$-curve and $\sF$ has rank $1$ (\cite{mat}, cf. \cite[Proposition 2.7]{ks}). The following proposition implies that the singular support gives the criterion for
the equality of both sides of the inequality \eqref{DTineq}.

\begin{proposition}[{\cite[Corollary 3.9.2]{wr}}]\label{saitomain}
Let $C$ be a smooth $k$-curve and $g:C\rightarrow X$ a qusai-finite morphism. We assume that $g^*D=C\times_XD$ is an effective Cartier divisor on $C$, that $D$ is smooth and the ramification of $\sF$ along $D$ is non-degenerate at each point of $g(C)\bigcap D$, and that $g:C\rightarrow X$ is $SS(j_!\sF)$-transversal. Then, we have
\begin{equation*}
g^*(\DT_X(j_!\sF))=\DT_C(g^*(j_!\sF)).
\end{equation*}
\end{proposition}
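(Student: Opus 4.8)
The plan is to realise the asserted equality as the ``transversal'' case of the inequality already furnished by Proposition \ref{hymain}, and to pin it down by a computation of characteristic cycles. Both divisors $g^*(\DT_X(j_!\sF))$ and $\DT_C(g^*(j_!\sF))$ are supported on $g^*D$, so it suffices to compare their coefficients at each closed point $c$ of $g^*D$, with image $x=g(c)\in D$. At such a point the transversality hypothesis forces $\ker(dg_{\bar c})\neq\bT^*_xX$ — otherwise $\ker(dg_{\bar c})$ would contain the nonzero conormal line of the smooth divisor $D$ at $x$, which lies in $SS(j_!\sF)\times_X\bar c$ — hence $dg_{\bar c}\neq 0$, i.e.\ $g$ is unramified at $c$. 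Being quasi-finite and unramified at $c$, after replacing $X$ and $C$ by suitable \'etale neighbourhoods of $x$ and $c$ the morphism $g$ becomes a closed immersion $i:C\hookrightarrow X$ of a smooth curve into $X$. Since $D$ is smooth and the ramification of $\sF$ along $D$ is non-degenerate at $x$, after deleting from $X$ a closed subset of $D$ of codimension $\geq 2$ missing $x$ and shrinking so that $D$ is irreducible, we may assume $X=X_0$ and $D=D_0$ in the notation of \ref{wrccss}; then $CC(j_!\sF)$ is given by the explicit formula \eqref{wrcbcc}, and $SS(j_!\sF)$ is its support, a union of the zero section, the conormal bundle $\bT^*_DX$, and the closed conical sets $\overline L_\chi$.

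By Proposition \ref{hymain} we already have $g^*(\DT_X(j_!\sF))\geq\DT_C(g^*(j_!\sF))$, so only the reverse inequality at $c$ remains, and for this I would use the compatibility of characteristic cycles with pull-back along the $SS(j_!\sF)$-transversal closed immersion $i$ (a closed immersion is proper, so this is the properly transversal case): up to the standard sign and an even shift and twist, $CC(i^*(j_!\sF))$ equals the Gysin pull-back $i^!CC(j_!\sF)=di_*(C\times_X(-))$ of cycles from $\bT^*X$ to $\bT^*C$. Applying $i^!$ to \eqref{wrcbcc} term by term: the term $\rk_\Lambda\sF\cdot[\bT^*_XX]$ pulls back to a multiple of the zero section $[\bT^*_CC]$ and contributes nothing to $[\bT^*_cC]$, while $[\bT^*_DX]$ and each $[\overline L_\chi]$, being supported over $D$, pull back to multiples of $[\bT^*_cC]$; transversality (each line of $SS(j_!\sF)\times_Xx$ meets $\ker(di_c)$ only in $0$) makes these pull-backs clean, with multiplicity $m_c(g^*D)$ for $[\bT^*_DX]$ and $m_c(g^*D)\cdot[F_\chi:F]$ for $[\overline L_\chi]$. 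Hence the coefficient of $[\bT^*_cC]$ in $i^!CC(j_!\sF)$ is, up to the sign dictated by the pull-back formula,
\[
m_c(g^*D)\Big(\dim_\Lambda M^{(1)}+\sum_{r>1}\sum_{\chi\in X(r)}r\dim_\Lambda M^{(r)}_\chi\Big)=m_c(g^*D)\cdot\dimtot_D(j_!\sF).
\]
Comparing with the curve-case identity $CC(i^*(j_!\sF))=-\rk_\Lambda\sF\cdot[\bT^*_CC]-\sum_{c'}\dimtot_{c'}(i^*(j_!\sF))\cdot[\bT^*_{c'}C]$ recalled in \S4, the signs match so that $\dimtot_c(i^*(j_!\sF))=m_c(g^*D)\cdot\dimtot_D(j_!\sF)$, which is precisely the coefficient of $[c]$ in $g^*(\DT_X(j_!\sF))$; letting $c$ vary over $g^*D$ gives the equality.

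The main obstacle is the rigorous invocation of the pull-back formula for characteristic cycles along $i$: one must verify that it applies to the constructible, not locally constant, sheaf $j_!\sF$, that the transversality it requires is exactly the $SS(j_!\sF)$-transversality we are given, and that $i^*(j_!\sF)$ and $i^!(j_!\sF)$ agree up to the stated shift and twist (the non-characteristic pull-back property). A second delicate point is the field-of-definition bookkeeping: the coefficients $\tfrac{r\dim_\Lambda M^{(r)}_\chi}{[F_\chi:F]}$ in \eqref{wrcbcc} need not be integers, and one must check that the factor $[F_\chi:F]$ reappears in the Gysin pull-back of $[\overline L_\chi]$ — geometrically, through the behaviour of $\overline L_\chi$ over $D$ and the splitting of $c$ — so that the sum in the display is the integer $\dimtot_D(j_!\sF)$ (consistency with Theorem \ref{xlmain} and Proposition \ref{hymain} is a useful check). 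If one prefers to avoid the pull-back formula, one may instead keep Proposition \ref{hymain} for ``$\geq$'' and deduce ``$\leq$'' from the Milnor formula of \cite[Theorem 5.9]{cc} applied on the curve $C$, at the cost of essentially the same intersection-number computation against \eqref{wrcbcc}.
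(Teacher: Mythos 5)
The paper does not prove Proposition \ref{saitomain}; it is quoted verbatim from Saito [wr, Corollary 3.9.2], so there is no internal argument to compare against, only the strategy of the cited source. Your sketch plausibly reconstructs that strategy, and the architecture is sound: if $g$ were ramified at a closed point $c$ of $g^*D$, then $\ker(dg_{\bar c})=\bT^*_xX$ would contain the nonzero fibre $SS(j_!\sF)\times_X\bar c$ (nonzero because $j_!\sF$ jumps across $D$), contradicting $SS(j_!\sF)$-transversality; an \'etale base change then replaces $g$ by a closed immersion, under which $D$, $\DT_X(j_!\sF)$, the singular support and non-degeneracy all pull back compatibly. Proposition \ref{hymain} supplies the inequality $g^*(\DT_X(j_!\sF))\geq\DT_C(g^*(j_!\sF))$, so only $\leq$ needs the characteristic-cycle computation.

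But the two points you label delicate are precisely the substance of the theorem, and your proposal does not establish either. The compatibility $CC(i^*\sK)=i^!CC(\sK)$ for a properly $SS(\sK)$-transversal closed immersion is itself a nontrivial theorem that you invoke without reference; for a transversal curve it is both cleaner and closer to what Saito does to argue via the Milnor formula built into Theorem \ref{ccmain}, applied to a local function with fibre $C$, noting that transversality makes $c$ an at most isolated characteristic point and then evaluating the intersection number against \eqref{wrcbcc}. Secondly, the multiplicity $m_c(g^*D)\cdot[F_\chi:F]$ for the Gysin pull-back of $[\overline L_{\chi}]$ is asserted, not proved: $\overline L_{\chi}$ is the closure of a line defined over a finite extension $F_\chi$ of the residue field at the generic point, and its fibre over the closed point $x$ could a priori split into fewer lines or acquire multiplicity, in which case the claimed intersection number would be wrong. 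This is exactly where the non-degeneracy hypothesis on the ramification of $\sF$ along $D$ carries the load, guaranteeing that the combinatorics at the generic point persist at every closed point of $D_0$; your sketch mentions this but does not trace it through. As it stands the proposal is an outline of the right shape, consistent with the cited [wr, Cor. 3.9.2], but the two acknowledged gaps are the actual content of Saito's proof rather than routine bookkeeping.
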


\section{Bounding the pull-back of the Swan divisor}

\subsection{}
In this section, we assume that $k$ is perfect. Let $X$ be a connected and smooth $k$-scheme, $D$ an effective Cartier divisor on $X$ with simple normal crossing, $\{D_i\}_{i\in I}$ the set of irreducible components of $D$, $U$ the complement of $D$ in $X$, $j:U\rightarrow X$ the canonical injection and $\mathscr F$ a locally constant and constructible sheaf of $\Lambda$-modules on $U$.

\begin{proposition}\label{propimm}
Let $C$ be a smooth $k$-curve and $g:C\rightarrow X$ a quasi-finite morphism. We assume that $g^*D=C\times_XD$ is a Cartier divisor on $C$. Then, we have
\begin{equation}\label{immpullback}
g^*(\SW_X(j_!\sF))\geq\SW_C(g^*(j_!\sF)).
\end{equation}
\end{proposition}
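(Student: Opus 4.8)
The plan is to deduce \eqref{immpullback} from the corresponding inequality for \emph{total dimension} divisors (Proposition \ref{hymain}) via a tame base change of large ramification index: after such a base change the total dimension divisor is squeezed between the Swan divisor and the Swan divisor plus a term bounded by $\dim_\Lambda\sF\cdot D$ (by \eqref{swdimtotcomp}), while the Swan divisor scales \emph{exactly} with the ramification index (by \eqref{tamesw}); dividing by the ramification index and letting it tend to infinity then removes the bounded error.

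First, by \eqref{swbc} we may assume $k$ algebraically closed. Since \eqref{immpullback} may be checked one coefficient at a time, and each coefficient occurring depends on $\sF$ only over a Zariski neighbourhood of the relevant point, we may shrink $X$ and so assume that $X$ is affine, that each $\mathcal O_X(D_i)$ is trivial, and that $C$ is connected. Fix global equations $t_i$ of the $D_i$ and an integer $m\geq 1$ with $p\nmid m$, and let $\pi\colon X'\to X$ be the Kummer cover obtained by adjoining an $m$-th root $s_i$ of each $t_i$. Because $D$ has simple normal crossings and $p\nmid m$, the scheme $X'$ is smooth over $\Spec(k)$ and of equidimension $n$; moreover $\pi$ is finite flat, tamely ramified along $D$, with $\pi^{-1}(U)=X'-D'$ for $D'=\sum_iD_i'$ a simple normal crossing divisor, $D_i'=V(s_i)$, and $\pi^*D_i=mD_i'$. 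Set $\sF'=\pi^*\sF$ on $U'=\pi^{-1}(U)$ and $j'\colon U'\to X'$; then $\pi^*(j_!\sF)=j'_!\sF'$.

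Applying \eqref{tamesw} at the generic point of each $D_i$ (where $\pi$ induces a tame totally ramified extension of degree $m$) gives $\SW_{X'}(j'_!\sF')=\pi^*\SW_X(j_!\sF)$, and \eqref{swdimtotcomp}, read coefficient by coefficient on $X'$, gives
\begin{equation*}
\SW_{X'}(j'_!\sF')\leq \DT_{X'}(j'_!\sF')\leq \SW_{X'}(j'_!\sF')+\dim_\Lambda\sF\cdot D'.
\end{equation*}
Let $C'$ be the normalization of $C\times_XX'$, with projections $\psi\colon C'\to C$ and $g'\colon C'\to X'$, so $\pi\circ g'=g\circ\psi$; then $C'$ is a smooth $k$-curve, $\psi$ is finite and surjective, and $\psi$ is tamely ramified (étale over $C-g^{-1}(D)$ and, over $g^{-1}(D)$, locally a compositum of tamely ramified extensions). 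Proposition \ref{hymain} applied to $g'\colon C'\to X'$ and $D'$, followed by \eqref{swdimtotcomp} on the curve $C'$, yields
\begin{equation*}
g'^*\bigl(\DT_{X'}(j'_!\sF')\bigr)\geq \DT_{C'}\bigl(g'^*(j'_!\sF')\bigr)\geq \SW_{C'}\bigl(g'^*(j'_!\sF')\bigr).
\end{equation*}
Since $g'^*(j'_!\sF')=\psi^*(g^*(j_!\sF))$ and $\psi$ is tame, \eqref{tamesw} at each point of $C'$ gives $\SW_{C'}(g'^*(j'_!\sF'))=\psi^*\SW_C(g^*(j_!\sF))$. Combining the three relations, together with $g'^*\pi^*=\psi^*g^*$ and $g'^*D'=\tfrac1m\,g'^*(\pi^*D)=\tfrac1m\,\psi^*(g^*D)$, gives on $C'$
\begin{equation*}
\psi^*\SW_C(g^*(j_!\sF))\leq \psi^*\bigl(g^*\SW_X(j_!\sF)\bigr)+\tfrac1m\dim_\Lambda\sF\cdot\psi^*(g^*D).
\end{equation*}
As $\psi$ is finite and surjective, a $\bQ$-Cartier divisor on $C$ is effective if and only if its pull-back along $\psi$ is, so $\SW_C(g^*(j_!\sF))\leq g^*\SW_X(j_!\sF)+\tfrac1m\dim_\Lambda\sF\cdot g^*D$. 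This holds for all $m$ prime to $p$; letting $m\to\infty$ gives \eqref{immpullback}.

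The technical heart of the argument is the setup of the tame cover: verifying that $X'$ is smooth (this is where the simple normal crossing hypothesis on $D$ enters), that the Swan divisor scales exactly by the ramification index, that the induced cover $\psi\colon C'\to C$ of curves is tame, and that $\pi^*$ commutes with $j_!$. Once these are in place the limit $m\to\infty$ — the geometric counterpart of the fact that $\sw_{K'}(M)$ is asymptotic to $\dimtot_{K'}(M)$ for a tame extension $K'/K$ of large ramification index — is immediate.
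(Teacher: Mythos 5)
Your argument is correct and is essentially the paper's: pass to a Kummer cover of $X$ tamely ramified along $D$ of index $m$ coprime to $p$, apply the non-logarithmic pull-back inequality of Proposition \ref{hymain} on the cover, use \eqref{swdimtotcomp} to trade total dimension for Swan up to an error of $\dim_\Lambda\sF$ per component, transfer back via the tame invariance \eqref{tamesw}, and let $m\to\infty$. The paper first reduces to the case that $g$ is a closed immersion and then builds the curve cover $C_{\underline\beta}$ and the lift $g_{\underline\beta}\colon C_{\underline\beta}\to X_{\underline\beta}$ by explicit equations (allowing a separate $\beta_i$ for each branch of $D$ through $g(x)$), whereas you take $C'$ to be the normalization of $C\times_XX'$ with a single $m$ — a slightly cleaner packaging that dispenses with the immersion reduction — but the underlying mechanism is identical.
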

\begin{proof}
The quasi-finite morphism $g:C\rightarrow X$ is a composition of the graph $\Gamma_g:C\rightarrow C\times_kX$ and the second projection $\mathrm{pr}_2:C\times_kX\rightarrow X$. Then, by Proposition \ref{smpullback}, we have
\begin{equation*}
\SW_{C\times_kX}(\mathrm{pr}_2^*(j_!\sF))=\mathrm{pr}_2^*(\SW_X(j_!\sF)).
\end{equation*}
Hence, it suffices to treat the case where $g:C\rightarrow X$ is a closed immersion. By a similar d\'evissage as in the proof of Proposition \ref{hymain} for Swan divisors, we may assume that $k$ is algebraically closed. This is a local problem for the Zariski topology of $C$ and $X$. We may assume that $X$ is affine and that $C$ and $D$ intersect at a single point $x\in X$.

Let $D_1, D_2,\cdots, D_m$ be irreducible components of $D$ that contain $x$,  $f_i$ $(i=1,2,\cdots, m)$ an element of $\Gamma(X,\sO_X)$ that defines $D_i$, and $t\in \Gamma(C,\sO_C)$ a local coodinate at $x$. For each $i=1,2,\cdots, m$, we denote by $\alpha_i$ the multiplicity of $g^*D_i$ at $x$. After replacing $X$ by a Zariski neighborhood of $x$, we may normalize each $f_i$ by $g^*f_i=t^{\alpha_i}\in \sO_{C,x}$  $(i=1,2,\cdots, m)$. Let $\cB$ be the set of $m$-ples of positive integers $\underline\beta=\{\beta_i\}_{i=1,2,\cdots m}$ such that each $\beta_i$ is co-prime to $p\cdot\prod_{i=1}^m\alpha_i$. Let
\begin{equation*}
X_{\underline \beta}=\Spec(\sO_X[T_1,\cdots, T_m]/(T_1^{\beta_1}-f_1,\cdots, T_m^{\beta_m}-f_m))
\end{equation*}
be a cover of $X$ tamely ramified along each $D_i$
and we denote by $h_{\underline\beta}:X_{\underline \beta}\rightarrow X$ the canonical projection. Notice that $X_{\ub}$ is a smooth $k$-scheme. We put $N_{\beta}=\prod_{i=1}^m\beta_i$. Let
\begin{align*}
C_{\ub}&=\Spec(\sO_C[T]/(T^{N_{\ub}}-t)).
\end{align*}
We have a commutative diagram
\begin{equation*}
\xymatrix{\relax C_{\underline\beta}\ar[d]_{g_{\ub}}\ar[r]^-(0.5){[N_{\ub}]}&C\ar[d]^g\\
X_{\underline\beta}\ar[r]_-(0.5){h_{\underline\beta}}&X}
\end{equation*}
where $[N_{\ub}]:C_{\ub}\rightarrow C$ is the canonical cyclic cover of degree $N_{\ub}$ tamely ramified at $x$ and $g_{\ub}:C_{\ub}\rightarrow X_{\ub}$ is given by
\begin{align*}
g_{\ub}^*:\sO_X[T_1,\cdots, T_m]/(T_1^{\beta_1}-f_1,\cdots, T_m^{\beta_m}-f_m)\rightarrow \sO_C[T]/(T^{N_{\ub}}-t),
\ \ \ T_i\mapsto (T^{N_{\ub}})^{\frac{\alpha_i }{\beta_i}}.
\end{align*}
Notice that the pre-image of $x\in X$ in $C_{\ub}$ is a single point. We denote it by $x'$. Let $D_{i,\ub}$ be the smooth divisor $(T_i)=(D_i\times_XX_{\ub})_{\mathrm{red}}$ of $X_{\ub}$ and $D_{\ub}$ the simple normal crossing divisor $(T_1\cdots T_m)=(D\times_XX_{\ub})_{\mathrm{red}}$ of $X_{\ub}$. We have
\begin{align*}
m_{x'}((h_{\ub}\circ g_{\ub})^*D_i)=N_{\ub}\cdot\alpha_i\ \ \ &\textrm{and}\ \ \
m_{x'}( (h_{\ub}\circ g_{\ub})^*D)=N_{\ub}\cdot\sum_{i=1}^m\alpha_i,\\ m_{x'}( g_{\ub}^*D_{i,\ub})=N_{\ub}\cdot\frac{\alpha_i}{\beta_i}\ \ \ &\textrm{and}\ \ \  m_{x'}( g_{\ub}^*D_{\ub})=N_{\ub}\cdot\sum^m_{i=1}\frac{\alpha_i}{\beta_i}.
\end{align*}

Applying \cite[Theorem 4.3]{HY} (cf. Proposition \ref{hymain}) to the sheaf $h_{\ub}^*(j_!\sF)$ on $X_{\ub}$ and the morphism $g_{\ub}:C_{\ub}\rightarrow X_{\ub}$, we have
\begin{equation}\label{hyapply}
\sum^m_{i=1}N_{\ub}\cdot\frac{\alpha_i}{\beta_i}\cdot\dimtot_{D_{i,\ub}}(h_{\ub}^*(j_!\sF)))\geq \dimtot_{x'}((h_{\ub}\circ g_{\ub})^*(j_!\sF))
\end{equation}
By \eqref{swdimtotcomp} and \eqref{swdimtotcurve}, we have
\begin{align}
\sw_{D_{i,\ub}}(h_{\ub}^*(j_!\sF)))+\rk_{\Lambda}\sF &\geq \dimtot_{D_{i,\ub}}(h_{\ub}^*(j_!\sF))),\label{compareX}\\
\sw_{x'}((h_{\ub}\circ  g_{\ub})^*(j_!\sF))+\rk_{\Lambda}\sF&=\dimtot_{ x'}((h_{\ub}\circ  g_{\ub})^*(j_!\sF)).\label{compareC}
\end{align}
Since each $\beta_i$ $(i=1,\cdots, m)$ is co-prime to $p$, the covering $h_{\ub}:X_{\ub}\rightarrow X$ is tamely ramified along the divisor $D$. By Proposition \ref{unramtame}, we have
\begin{align}
\sw_{D_{i,\ub}}(h_{\ub}^*(j_!\sF)))&=\beta_i\cdot\sw_{D_i}(j_!\sF),\label{tameswX}\\
\sw_{x'}((h_{\ub}\circ  g_{\ub})^*(j_!\sF))&=N_{\ub}\cdot \sw_x(g^*(j_!\sF)).\label{tameswC}
\end{align}
By \eqref{hyapply}, \eqref{compareX}, \eqref{compareC}, \eqref{tameswX} and \eqref{tameswC}, we have
\begin{equation}
\sum^m_{i=1}N_{\ub}\cdot\frac{\alpha_i}{\beta_i}\cdot(\beta_i\cdot\sw_{D_i}(j_!\sF)+ \rk_{\Lambda}\sF)\geq N_{\ub}\cdot \sw_x(g^*(j_!\sF))+\rk_{\Lambda}\sF.
\end{equation}
Divide both sides by $N_{\ub}$, we obtain
\begin{equation*}
\sum^m_{i=1}\alpha_i\cdot\sw_{D_i}(j_!\sF)+\left(\sum^m_{i=1}\frac{\alpha_i}{\beta_i}-\frac{1}{N_{\ub}}\right)\cdot\rk_{\Lambda}\sF\geq \sw_x(g^*(j_!\sF)).
\end{equation*}
This inequality holds for any element $\ub$ of $\cB$. Passing $\beta_i\rightarrow +\infty$ $(i=1,\cdots, m)$, we get
\begin{equation*}
\sum^m_{i=1}\alpha_i\cdot\sw_{D_i}(j_!\sF)\geq \sw_x(g^*(j_!\sF)).
\end{equation*}
It gives the inequality \eqref{immpullback}.
\end{proof}

\begin{proposition}\label{findcurve}
Assume that $X$ is affine and of dimension $n\geq 2$ and $D$ is irreducible and smooth over $\Spec(k)$. Let $\beta$ be a positive integer co-prime to $p$, $f$ an element of $\Gamma(X,\sO_X)$ that defines $D$,
\begin{equation*}
X_{\beta}=\Spec(\sO_X[T]/(T^\beta-f))
\end{equation*}
a cyclic cover of $X$ of degree $\beta$ tamely ramified along $D$, $h_{\beta}:X_{\beta}\rightarrow X$ the canonical projection, $D_{\beta}$ the smooth divisor $(T)=(D\times_XX_{\beta})_{\mathrm{red}}$ of $X_{\beta}$. Let $B$ a closed conical subset of  $\bT^*X_{\beta}\times_{X_{\beta}}D_{\beta}$ of equidimension $n$ such that, for any $z\in D_{\beta}$, the fiber $B_z\subseteq \bT^*_zX_{\beta}$ is non-empty. Then, we can find a Zariski open dense subset $W_{\beta}$ of $D_{\beta}$ such that, for any closed point $x'$ of $W_{\beta}$, we can find a smooth $k$-curve $C$ and an immersion $g_{\beta}:C\rightarrow X_{\beta}$ such that
\begin{itemize}
\item[(i)] $C\bigcap D_{\beta}=x'$ and $m_{x'}(g^*_{\beta}D_{\beta})=1$;
\item[(ii)] $g_{\beta}:C\rightarrow X_{\beta}$ is $B$-transversal at $x'$;
\item[(iii)] the composition $h_{\beta}\circ g_{\beta}:C\rightarrow X$ is also an immersion.
\end{itemize}
\end{proposition}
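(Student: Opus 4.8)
The plan is to construct the curve $C$ by slicing $X_\beta$ with general linear subspaces through a given point $x'$, using a dimension count to arrange that the slice is simultaneously transversal to $B$, meets $D_\beta$ only at $x'$ with multiplicity one, and maps to an immersion under $h_\beta$. The key point is that $B$ has equidimension $n = \dim X_\beta$, while the bad locus of directions we must avoid is of strictly smaller dimension, so a generic $1$-dimensional slice avoids it.

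First I would fix the projective embedding: since $X_\beta$ is affine, I may choose a locally closed immersion $X_\beta \hookrightarrow \mathbb{A}^N_k \hookrightarrow \mathbb{P}^N_k$. Over a point $z \in D_\beta$, the fiber $B_z \subseteq \bT^*_z X_\beta$ is a closed conical subset of dimension at most $n - \dim D_\beta = 1$, hence a finite union of lines (this is the same observation as at the end of \ref{wrccss}). For $C$ through $x'$ to be $B$-transversal at $x'$ it suffices that the conormal space $\bT^*_{C}X_\beta \times_{X_\beta} x'$ meet $B_{x'}$ only at the origin; since $\dim C = 1$, this conormal space has dimension $n-1$, and the condition that it contain no nonzero vector of $B_{x'}$ is a nonempty open condition on the choice of the $(n-1)$-codimensional... rather, on the $1$-dimensional tangent direction of $C$ at $x'$. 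Intersecting $X_\beta$ with a generic linear $\mathbb{P}^{N - n + 1}_k$ through $x'$ cuts out a curve through $x'$; by a Bertini-type argument (applied away from $x'$) one arranges smoothness of $C$ and that $C \cap D_\beta = \{x'\}$ with $m_{x'}(g_\beta^* D_\beta) = 1$, the latter because a general hyperplane section meets the smooth divisor $D_\beta$ transversally. For (iii), note $h_\beta: X_\beta \to X$ is finite and separable, so it is an immersion near any point where it is unramified; shrinking $W_\beta$ to the (dense open) unramified locus of $h_\beta|_{D_\beta}$, and noting that the generic curve $C$ stays in the unramified locus of $h_\beta$, makes $h_\beta \circ g_\beta$ an immersion near $x'$, after further shrinking $C$.

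The remaining task is to show these conditions can be met \emph{simultaneously} at a point $x'$ ranging over a \emph{dense open} $W_\beta \subseteq D_\beta$, not just at one point. Here I would use the incidence-variety / constructibility formalism: consider the family over $D_\beta$ of pairs $(x', \ell)$ where $\ell$ is a tangent direction at $x'$ such that the corresponding conormal $(n-1)$-plane avoids $B_{x'}\setminus\{0\}$. Because $B$ has pure dimension $n$ over the $(n-1)$-dimensional base $D_\beta$, the locus of "bad" directions over each point is a proper closed subset of the $(n-1)$-dimensional space of directions; by Chevalley's theorem the locus of $x'$ where a good direction exists is constructible, and it is visibly stable under generization, hence contains a dense open $W_\beta$. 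Over $W_\beta$ one then picks a generic linear slice as above and invokes generic smoothness (in a form that does not need characteristic $0$: the slicing is by linear subspaces, and Bertini over an infinite field — which we may assume after the base change to $\bar k$ is harmless here since $W_\beta$ and all genericity statements descend — gives a smooth $C$).

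The main obstacle I expect is \textbf{item (ii) combined with openness in $x'$}: ensuring $B$-transversality at $x'$ is a pointwise linear-algebra condition, but propagating it to a dense open set of $x'$ requires the equidimensionality hypothesis on $B$ to control the dimension of the "bad direction" locus fiberwise and a semicontinuity/constructibility argument to turn that into openness on $D_\beta$. The compatibility of this with the two immersion conditions (i) and (iii) is comparatively routine, relying on standard facts about hyperplane sections of smooth varieties and the unramified locus of the finite map $h_\beta$, but one must be careful that the generic linear slice can be chosen to avoid finitely many bad hyperplanes at once, which is where the hypothesis $n \geq 2$ (so that the space of hyperplanes is positive-dimensional) and the affineness of $X$ are used.
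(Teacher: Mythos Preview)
Your proposal has a genuine gap in the handling of condition (iii), and a secondary issue with the reduction to an infinite base field.

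For (iii), you write that $h_\beta$ ``is an immersion near any point where it is unramified'' and propose to shrink $W_\beta$ to the unramified locus. But $h_\beta:X_\beta\to X$ is ramified exactly along $D_\beta$, so every point $x'\in W_\beta\subseteq D_\beta$ lies in the ramification locus; the unramified-locus argument does nothing here. The differential $\bT h_{\beta,x'}:\bT_{x'}X_\beta\to\bT_xX$ has a one-dimensional kernel $H_X^\vee$ (dual to the image of $dh_{\beta,x'}$), and if the tangent direction of $C$ at $x'$ lies in $H_X^\vee$ then $h_\beta\circ g_\beta$ cannot be an immersion at $x'$. The paper handles this by adding $H_X^\vee$ to the list of forbidden directions, choosing the tangent vector $\lambda$ outside $H_X^\vee\cup\bT_{x'}D_\beta\cup\bigcup_i L_{x',i}^\vee$, and then writing down an explicit curve $C$ via a regular system of parameters of $\sO_{X_\beta,x'}$ built from lifts of elements of $\fm_{X,x}$ together with the coordinate $T$; a direct calculation with this presentation shows $\sO_{C,x'}\cong\sO_{X,x}/(t_1^\beta-f,t_2,\dots,t_{n-1})$, whence $h_\beta\circ g_\beta$ is a closed immersion. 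Your Bertini slice does not supply this: a general linear section of $X_\beta$ has no reason to be the pullback of a subscheme of $X$.

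The second issue is the claim that passing to $\bar k$ is ``harmless''. Over a small finite field $k(x')$ there may not be enough directions in $\bT_{x'}X_\beta$ to avoid the finite union of forbidden hyperplanes, so a curve over $k$ with the desired tangent direction need not exist at every closed point, and a curve found over $\bar k$ need not descend. The paper does not base-change; instead it bounds the number of lines in $B_{x'}$ uniformly by some $N$ on a dense open, and then shrinks $W_\beta$ further so that every closed point has residue degree large enough that $(N+2)$ hyperplanes cannot cover $k(x')^n$. This is the step that makes the argument work over an arbitrary perfect $k$.
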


\begin{proof}
After replacing $X$ by an open dense subset of the generic point of $D$, we may assume that, for any point $z$ of $D_{\beta}$, the fiber $B_z\subseteq \bT^*_zX_{\beta}$ is a union of $1$-dimensional $k(z)$-vector spaces.  Let $x'$ be a closed point of $D_{\beta}$ and $x=h_{\beta}(x')$. Since $h_{\beta}|_{D_{\beta}}:D_{\beta}\rightarrow D$ is isomorphic, we have $k(x)=k(x')$. We denote by $\overline T$ the image of $T\in\fm_{X_{\beta},x'}$ in $\bT^*_{x'}X_{\beta}$ and by $\bar f$ the image of $f\in\fm_{X,x}$ in $\bT^*_xX$. The canonical map $dh_{\beta,x'}:\bT^*_xX\rightarrow \bT^*_{x'}X_{\beta}$ induces an exact sequence of $k(x)$-vector spaces
\begin{equation*}
0\rightarrow\mathrm{span}\{\bar f\}\rightarrow\bT^*_xX\xrightarrow{{dh_{\beta,x'}}}\bT^*_{x'}X_{\beta}\rightarrow M\rightarrow 0,
\end{equation*}
where $M$ is of rank $1$ such that the image of $\overline T$ in $M$ is non-zero. We denote by $H_X$ the image of $dh_{\beta,x'}$ in $T^*_{x'}X_{\beta}$ and we have $\mathrm{span}\{\overline T\}+H_X=T^*_{x'}X_{\beta}$.
We denote by $H^{\vee}_X\subseteq \bT_{x'}X_{\beta}$ the dual line of $H_X$. In fact, $H^{\vee}_X=\ker(\bT h_{\beta,x'})$, where $\bT h_{\beta,x'}:\bT_{x'} X_{\beta}\rightarrow \bT_x X$ is the dual of $dh_{\beta,x'}:\bT^*_xX\rightarrow \bT^*_{x'}X_{\beta}$.

By \cite[III, 9.7.8]{EGA4}, we can find an open dense subset $V_{\beta}$ of $D_{\beta}$ and a positive integer $N$ such that, for each closed point $x'$ of $V_{\beta}$, the fiber $B_{x'}$ is a finite union of $1$-dimensional vector spaces $\{L_{x',(i)}\}_{i\in I}$, where the cardinality of $I$ is less than $N$. There exists an integer $N'$ such that, for any field extension $k'/k$ with $[k':k]\geq N'$, any union of $(N+2)$'s hyperplanes of $k'^n$ will not cover the whole vector space (If $k$ has infinitely many elements, $k'=k$ satisfies the condition). Let $W_{\beta}$ be a Zariski open dense subset of $V_{\beta}$ such that, for any closed point $x'$ of $W_{\beta}$, we have $[k(x'):k]\geq N'$. Let $L^{\vee}_{x',i}\subseteq \bT_{x'}X_{\beta}$ be the dual hyperplane of $L_{x',i}$ and $\bT_{x'}D_{\beta}$ the stalk of the tangent bundle of $\bT D_{\beta}$ at $x'$.
For any closed point $x'$ of $W_{\beta}$, the complement $\bT_{x'} X_{\beta}-((\bigcup_{i\in I}L_{x',i}^{\vee})\bigcup \bT_{x'}D_{\beta}\bigcup H^{\vee}_X)$ is non-empty. Hence,
we can find a non-zero vector $\lambda\in \bT_{x'} X_{\beta}-((\bigcup_{i\in I}L_{x',i}^{\vee})\bigcup \bT_{x'}D_{\beta}\bigcup H^{\vee}_X)$. We denote by $H_\lambda\subseteq \bT^*_{x'}X_{\beta}$ the dual hyperplane of $\lambda\in\bT_{x'}X_{\beta}$. Since $\lambda\notin \bT_{x'}D_{\beta}$, we have $\mathrm{span}\{\overline T\}+H_{\lambda}=\bT^*_{x'}X_{\beta}$. Since $\lambda\not\in H^{\vee}_X$, we have $H_X\neq H_\lambda$. Let $\{\bar t_2,\cdots,\bar t_{n-1}\}$ be a base of the rank $n-2$ vector space $H_\lambda\bigcap H_X$ and $\bar t_{1}$ a non-zero vector of $H_X$ such that $\{\bar t_1,\cdots,\bar t_{n-1}\}$ is base of $H_X$ and that $\{\bar t_1-\overline T,\bar t_2\cdots,\bar t_{n-1}\}$ is a base of $H_\lambda$.  Let $t_1,\cdots, t_{n-1}$ be liftings of $\bar t_1,\cdots,\bar t_{n-1}$ in the maximal ideal $\fm_{X,x}$ of $\sO_{X,x}$. Observe that $f, t_1,\cdots, t_{n-1}$ is a regular system of parameters of $\sO_{X,x}$ and $T, t_1-T,t_2,\cdots, t_{n-1}$ is a regular system of parameters of $\sO_{X_{\beta},x'}$.

In a Zariski neighborhood of $x'\in X_{\beta}$, we define a $k$-curve $C$ by the ideal
\begin{equation*}
(t_1-T,t_2,\cdots, t_{n-1})\subseteq\sO_{X_{\beta},x'}=\sO_{X,x}[T]/(T^\beta-f).
\end{equation*}
 It is smooth at $x'$ and $\bT_{x'}C$ is spanned by $\lambda\in\bT_{x'}X_{\beta}$. We denote by $g:C\rightarrow X_{\beta}$ the canonical injection. Since $\bT_{x'}C\not\subseteq \bT_{x'}D_{\beta}$, we have
\begin{equation*}
m_{x'}(g^*_{\beta} D_{\beta})=1\ \ \ \textrm{and}\ \ \ m_{x'}((h_{\beta}\circ g_{\beta})^*D)=\beta.
\end{equation*}
 Since $\bT_{x'}C\not\subseteq\bigcup_{i\in I}L_{x',i}^{\vee}$, the map $g_{\beta}:C\rightarrow X_{\beta}$ is $B$-transversal at $x'$.
Since
\begin{equation*}
\sO_{C,x'}=\sO_{X,x}[T]/(T^\beta-f, t_1-T,t_2,\cdots,t_{n-1})\xrightarrow{\sim}\sO_{X,x}/(t_1^\beta-f,t_2,\cdots, t_{n-1}),
\end{equation*}
in a Zariski neighborhood of $x\in X$, the composition $h_{\beta}\circ g_{\beta}:C\rightarrow X$ is a closed immersion.
Hence, the Zariski open dense subset $W_{\beta}$ of $D_{\beta}$ and the curve $C$ satisfy our conditions.
\end{proof}

\begin{proposition}\label{conj2strong}
Assume that $X$ is of dimension $n\geq 2$ and that $D$ is irreducible and smooth over $\Spec(k)$. Let $\beta$ be a positive integer co-prime to $p$, $f$ an element of $\Gamma(X,\sO_X)$ that defines $D$,
\begin{equation*}
X_{\beta}=\Spec(\sO_X[T]/(T^\beta-f))
\end{equation*}
a cyclic cover of $X$ of degree $\beta$ tamely ramified along $D$, $h_{\beta}:X_{\beta}\rightarrow X$ the canonical projection, $D_{\beta}$ the smooth divisor $(T)=(D\times_XX_{\beta})_{\mathrm{red}}$ of $X_{\beta}$, $U_{\beta}$ the complement of $D_{\beta}$ in $X_{\beta}$. We can find a Zariski open dense subset $W_{\beta}$ of $D_{\beta}$ such that the ramification of $\sF|_{U_{\beta}}$ along $W_{\beta}$ is non-degenerate (\ref{wrccss}) and that, for any closed point $x'$ of $W_{\beta}$, there exists an immersion $g_{\beta}:C\rightarrow X_{\beta}$ satisfying (Proposition \ref{findcurve})
\begin{itemize}
\item[(i)] $C\bigcap D_{\beta}=x'\in D_{\beta,0}$ and $m_{x'}(g^*_{\beta}D_{\beta})=1$;
\item[(ii)] $g_{\beta}:C\rightarrow X_{\beta}$ is $SS(h_{\beta}^*(j_!\sF))$-transversal at $x'$;
\item[(iii)] the composition $h_{\beta}\circ g_{\beta}:C\rightarrow X$ is also an immersion.
\end{itemize}
Let $x'$ be a closed point of  $W_{\beta}$ and $g_{\beta}:C\rightarrow X_{\beta}$ an immersion satisfying (i)--(iii). Then, we have
\begin{equation}\label{limitsw}
\sw_D(j_!\sF)\geq \frac{1}{\beta}\cdot\sw_{x'}((h_{\beta}\circ g_{\beta})^*(j_!\sF))\geq \sw_D(j_!\sF)-\frac{1}{\beta}\cdot \rk_\Lambda\sF.
\end{equation}
\end{proposition}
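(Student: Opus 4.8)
The plan is to obtain $W_{\beta}$ from Proposition \ref{findcurve} applied to a suitably chosen closed conical set $B$, and then to read off \eqref{limitsw} by combining Proposition \ref{saitomain} on $X_{\beta}$ with the tame scaling of the Swan conductor and the two-sided comparison \eqref{swdimtotcomp}. Since the assertion is generic along $D$, I would first shrink $X$ to an affine open meeting $D$, so that $X_{\beta}$ is affine and $D_{\beta}\cong D$ is irreducible and smooth; write $\sF_{\beta}=(h_{\beta}|_{U_{\beta}})^{*}\sF$, which is locally constant and constructible on $U_{\beta}$ because $h_{\beta}|_{U_{\beta}}\colon U_{\beta}\to U$ is finite \'etale, and note $h_{\beta}^{*}(j_{!}\sF)=j_{\beta,!}\sF_{\beta}$ where $j_{\beta}\colon U_{\beta}\to X_{\beta}$. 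Applying \ref{wrccss} to $(X_{\beta},D_{\beta},\sF_{\beta})$, after removing from $D_{\beta}$ a subset of codimension $2$ in $X_{\beta}$ one may assume the ramification of $\sF_{\beta}$ along $D_{\beta}$ is non-degenerate and that the irreducible components of $SS(h_{\beta}^{*}(j_{!}\sF))$ lying over $D_{\beta}$ are the conormal bundle $\bT^{*}_{D_{\beta}}X_{\beta}$ and the closures $\overline{L}_{\chi}$, all of dimension $n$, with $SS(h_{\beta}^{*}(j_{!}\sF))\times_{X_{\beta}}z$ equal to the origin together with a non-empty finite union of lines for every closed point $z\in D_{\beta}$. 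Taking $B$ to be the union of these components over $D_{\beta}$ (a closed conical subset of $\bT^{*}X_{\beta}\times_{X_{\beta}}D_{\beta}$, equidimensional of dimension $n$, with non-empty fibres), Proposition \ref{findcurve} produces $W_{\beta}$, which I would shrink inside the non-degenerate locus; this yields the first assertion and curves $g_{\beta}\colon C\to X_{\beta}$ satisfying (i) and (iii). For (ii), away from $x'$ the curve lies in $U_{\beta}$ where $SS(h_{\beta}^{*}(j_{!}\sF))$ is the zero section, so transversality is automatic there, while at $x'$ the fibre of $SS(h_{\beta}^{*}(j_{!}\sF))$ is the fibre of $B$ together with the origin, so $B$-transversality at $x'$ (Proposition \ref{findcurve}(ii)) is precisely $SS(h_{\beta}^{*}(j_{!}\sF))$-transversality; hence $g_{\beta}$ is $SS(h_{\beta}^{*}(j_{!}\sF))$-transversal everywhere.

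Next, for a fixed $x'\in W_{\beta}$ and $g_{\beta}$ as in (i)--(iii), I would apply Proposition \ref{saitomain} on $X_{\beta}$ to $\sF_{\beta}$ and $g_{\beta}\colon C\to X_{\beta}$: $g_{\beta}$ is quasi-finite (an immersion), $g_{\beta}^{*}D_{\beta}$ is the reduced point $x'$, hence an effective Cartier divisor on $C$, $D_{\beta}$ is smooth, the ramification of $\sF_{\beta}$ along $D_{\beta}$ is non-degenerate at $g_{\beta}(C)\cap D_{\beta}=\{x'\}$, and $g_{\beta}$ is $SS(h_{\beta}^{*}(j_{!}\sF))$-transversal. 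This gives
\begin{equation*}
g_{\beta}^{*}\bigl(\DT_{X_{\beta}}(h_{\beta}^{*}(j_{!}\sF))\bigr)=\DT_{C}\bigl((h_{\beta}\circ g_{\beta})^{*}(j_{!}\sF)\bigr).
\end{equation*}
Comparing coefficients at $x'$: since $D_{\beta}$ is irreducible and $m_{x'}(g_{\beta}^{*}D_{\beta})=1$, the left side contributes $\dimtot_{D_{\beta}}(h_{\beta}^{*}(j_{!}\sF))$, and the right side contributes $\dimtot_{x'}((h_{\beta}\circ g_{\beta})^{*}(j_{!}\sF))$; as $C$ is a $k$-curve over the perfect field $k$, the residue field at $x'$ is perfect, so \eqref{swdimtotcurve} turns the latter into $\sw_{x'}((h_{\beta}\circ g_{\beta})^{*}(j_{!}\sF))+\rk_{\Lambda}\sF$. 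Hence
\begin{equation*}
\dimtot_{D_{\beta}}(h_{\beta}^{*}(j_{!}\sF))=\sw_{x'}((h_{\beta}\circ g_{\beta})^{*}(j_{!}\sF))+\rk_{\Lambda}\sF.
\end{equation*}

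Finally, since $h_{\beta}$ has ramification index $\beta$ along $D$ and is tamely ramified (as $p\nmid\beta$), Proposition \ref{unramtame} gives $\sw_{D_{\beta}}(h_{\beta}^{*}(j_{!}\sF))=\beta\cdot\sw_{D}(j_{!}\sF)$, and \eqref{swdimtotcomp} at the generic point of $D_{\beta}$ yields
\begin{equation*}
\beta\cdot\sw_{D}(j_{!}\sF)\le\dimtot_{D_{\beta}}(h_{\beta}^{*}(j_{!}\sF))\le\beta\cdot\sw_{D}(j_{!}\sF)+\rk_{\Lambda}\sF.
\end{equation*}
Substituting the identity just obtained, subtracting $\rk_{\Lambda}\sF$ and dividing by $\beta$ gives exactly \eqref{limitsw}. (Alternatively, the left inequality in \eqref{limitsw} follows directly from Proposition \ref{propimm} applied to the immersion $h_{\beta}\circ g_{\beta}\colon C\to X$, together with $\SW_{X}(j_{!}\sF)=\sw_{D}(j_{!}\sF)\cdot D$ and $m_{x'}((h_{\beta}\circ g_{\beta})^{*}D)=\beta$.)

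The genuinely new input is already packaged in Propositions \ref{findcurve} and \ref{saitomain} and in \ref{wrccss}, so within this argument the part I expect to need the most care is the passage from the transversality that Proposition \ref{findcurve} guarantees only \emph{at the single point $x'$} to $SS(h_{\beta}^{*}(j_{!}\sF))$-transversality of $g_{\beta}$ \emph{along all of $C$}, since only the latter licences the use of Proposition \ref{saitomain}; the other point worth isolating is that the discrepancy $0\le\dimtot-\sw\le\rk_{\Lambda}\sF$ at the generic point of $D_{\beta}$, combined with the tame scaling $\sw_{D_{\beta}}=\beta\cdot\sw_{D}$, is exactly what produces the window of width $\tfrac{1}{\beta}\rk_{\Lambda}\sF$ in \eqref{limitsw}, so that letting $\beta\to\infty$ (carried out later, outside this proposition) pins down $\sw_{D}(j_{!}\sF)$.
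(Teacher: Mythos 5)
Your proof is correct and follows essentially the same skeleton as the paper: shrink along $D_\beta$ to a non-degenerate locus, apply Proposition \ref{findcurve} to produce $W_\beta$ and the curve $g_\beta$, use Proposition \ref{saitomain} on $X_\beta$ to convert $\dimtot_{D_\beta}$ into $\dimtot_{x'}$, pass to $\sw_{x'}$ by \eqref{swdimtotcurve} since $C$ is a curve, and scale $\sw$ by $\beta$ by tameness. Where you deviate is in how you obtain the left-hand inequality $\sw_D(j_!\sF)\geq \tfrac{1}{\beta}\sw_{x'}$: the paper applies Proposition \ref{propimm} to the immersion $h_\beta\circ g_\beta:C\to X$, while you instead use the upper bound $\dimtot_{D_\beta}\leq\sw_{D_\beta}+\rk_\Lambda\sF$ of \eqref{swdimtotcomp} at the generic point of $D_\beta$ together with the already-established identity $\dimtot_{D_\beta}=\sw_{x'}+\rk_\Lambda\sF$. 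Your route is a touch leaner, since it sidesteps a second invocation of the curve pullback inequality and extracts both sides of \eqref{limitsw} from the single comparison $\sw_{D_\beta}\leq\dimtot_{D_\beta}\leq\sw_{D_\beta}+\rk_\Lambda\sF$; the paper's route has the mild advantage of making visible that the upper bound in \eqref{limitsw} is an instance of the same Deligne-style pullback inequality that the rest of the argument is built around. You also make explicit the construction of $W_\beta$ (choice of $B$ from \ref{wrccss}, and the passage from $B$-transversality at the single point $x'$ to $SS$-transversality along all of $C$ using that $SS$ is the zero section away from $D_\beta$), which the paper leaves to the reader by citing Proposition \ref{findcurve}; this is filled in correctly.
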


\begin{proof}
Applying \cite[Corollary 3.9.2]{wr} (cf. Proposition \ref{saitomain}) to the sheaf $h_{\beta}^*(j_!\sF)$ on $X_{\beta}$ and the morphism $g_{\beta}:C\rightarrow X_{\beta}$, we have
\begin{equation}\label{applysaitomain}
\dimtot_{D_{\beta}}(h_{\beta}^*(j_!\sF))=\dimtot_{x'}((h_{\beta}\circ g_{\beta})^*(j_!\sF)).
\end{equation}
By \eqref{swdimtotcomp} and \eqref{swdimtotcurve}, we have
\begin{align}
\dimtot_{D_{\beta}}(h_{\beta}^*(j_!\sF))&\geq \sw_{D_{\beta}}(h_{\beta}^*(j_!\sF)),\label{dimtotgeqsw}\\
\dimtot_{x'}((h_{\beta}\circ g_{\beta})^*(j_!\sF))&=\sw_{x'}((h_{\beta}\circ g_{\beta})^*(j_!\sF))+\rk_{\Lambda}\sF. \label{dimtoteqswrk}
\end{align}
Since $\beta$ is co-prime to $p$, we have (Proposition \ref{unramtame})
\begin{equation}\label{swcomptame}
\sw_{D_{\beta}}(h_{\beta}^*(j_!\sF))=\beta\cdot \sw_{D}(j_!\sF).
\end{equation}
By \eqref{applysaitomain}, \eqref{dimtotgeqsw}, \eqref{dimtoteqswrk} and \eqref{swcomptame}, we have
\begin{equation}\label{swrkgeqbsw}
\sw_{x'}((h_{\beta}\circ g_{\beta})^*(j_!\sF))+\rk_{\Lambda}\sF\geq \beta\cdot \sw_{D}(j_!\sF).
\end{equation}
Applying Proposition \ref{propimm} to the sheaf $j_!\sF$ on $X$ and the injection $h_{\beta}\circ g_{\beta}:C\rightarrow X$, we have
\begin{equation}\label{bswgeqsw}
\beta\cdot \sw_{D}(j_!\sF)\geq\sw_{x'}((h_{\beta}\circ g_{\beta})^*(j_!\sF)).
\end{equation}
Divide both sides of \eqref{swrkgeqbsw} and \eqref{bswgeqsw} by $\beta$, we get
\begin{equation*}
\sw_D(j_!\sF)\geq \frac{1}{\beta}\cdot\sw_{x'}((h_{\beta}\circ g_{\beta})^*(j_!\sF))\geq \sw_D(j_!\sF)-\frac{1}{\beta}\cdot\rk_\Lambda\sF.
\end{equation*}
\end{proof}

\begin{theorem}[{cf. Conjecture \ref{ekconjb}}]\label{thmconjb}
Assume that the divisor $D$ is irreducible and smooth over $\Spec(k)$.
Let $\mathcal{PC}_0(X)$ be the set of pairs $(g: C\rightarrow X,\, x\in C)$ where $C$ is a smooth $k$-curve, $g:C\rightarrow X$ is an immersion such that $g^*D=C\times_XD$ is a Cartier divisor on $C$, and $x$ is a closed point of  $C\times_XD$. Then, we have
\begin{equation}\label{thmconjbform}
\sup_{\mathcal{PC}_0(X)}\frac{\sw_x(g^*(j_!\sF))}{m_x(g^*D)}=\sw_D(j_!\sF).
\end{equation}
\end{theorem}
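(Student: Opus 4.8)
The plan is to establish the two inequalities in \eqref{thmconjbform} separately. The bound $\sup\leq\sw_D(j_!\sF)$ is an immediate consequence of Proposition \ref{propimm}. Indeed, take an arbitrary pair $(g\colon C\to X,\,x)\in\mathcal{PC}_0(X)$. An immersion is quasi-finite, and $g^*D$ is a Cartier divisor on $C$ by the definition of $\mathcal{PC}_0(X)$, so Proposition \ref{propimm} gives $g^*(\SW_X(j_!\sF))\geq\SW_C(g^*(j_!\sF))$. Since $D$ is irreducible we have $\SW_X(j_!\sF)=\sw_D(j_!\sF)\cdot D$, so the coefficient of the closed point $x$ in $g^*(\SW_X(j_!\sF))=\sw_D(j_!\sF)\cdot g^*D$ is $m_x(g^*D)\cdot\sw_D(j_!\sF)$, whereas the coefficient of $x$ in $\SW_C(g^*(j_!\sF))$ is $\sw_x(g^*(j_!\sF))$. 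Comparing these coefficients and dividing by $m_x(g^*D)>0$ yields $\sw_x(g^*(j_!\sF))/m_x(g^*D)\leq\sw_D(j_!\sF)$; taking the supremum over $\mathcal{PC}_0(X)$ gives one half of \eqref{thmconjbform}.

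For the reverse inequality I would first reduce to the situation where $X$ is affine of dimension $n\geq 2$ and $D$ is defined by a single $f\in\Gamma(X,\sO_X)$. If $n=1$, then $D$ consists of a single closed point $x$, and the pair $(\mathrm{id}_X,\,x)$ lies in $\mathcal{PC}_0(X)$ with $m_x(\mathrm{id}_X^*D)=1$ and $\sw_x(j_!\sF)=\sw_D(j_!\sF)$ directly from the definitions, so the supremum is already $\geq\sw_D(j_!\sF)$. If $n\geq 2$, choose an affine open $X^\circ\subseteq X$ containing the generic point of $D$ on which $D^\circ:=D\cap X^\circ$ is principal (it remains irreducible and smooth, of dimension $n$), and write $j^\circ$ for the restriction of $j$ to the complement of $D^\circ$ in $X^\circ$. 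Since open immersions are \'etale, $\sw_{D^\circ}(j^\circ_!(\sF|_{U\cap X^\circ}))=\sw_D(j_!\sF)$, and postcomposition with $X^\circ\hookrightarrow X$ embeds $\mathcal{PC}_0(X^\circ)$ into $\mathcal{PC}_0(X)$ in a way that preserves the quantity $\sw_x(\,\cdot\,)/m_x(\,\cdot\,)$; hence it suffices to bound $\sup_{\mathcal{PC}_0(X^\circ)}$ from below, and we may rename $X^\circ$ as $X$.

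Now, for each positive integer $\beta$ coprime to $p$, Proposition \ref{conj2strong} produces the tame cyclic cover $h_\beta\colon X_\beta\to X$ of degree $\beta$, a dense open $W_\beta\subseteq D_\beta$, a closed point $x'\in W_\beta$ and an immersion $g_\beta\colon C\to X_\beta$ satisfying (i)--(iii) such that
\begin{equation*}
\sw_D(j_!\sF)\ \geq\ \tfrac{1}{\beta}\,\sw_{x'}\bigl((h_\beta\circ g_\beta)^*(j_!\sF)\bigr)\ \geq\ \sw_D(j_!\sF)-\tfrac{1}{\beta}\,\rk_\Lambda\sF .
\end{equation*}
By (iii) the composition $h_\beta\circ g_\beta\colon C\to X$ is an immersion; since $h_\beta^*D=\beta\cdot D_\beta$, its pull-back $(h_\beta\circ g_\beta)^*D=\beta\cdot g_\beta^*D_\beta$ is an effective Cartier divisor on the smooth curve $C$, and by (i) we get $m_{x'}((h_\beta\circ g_\beta)^*D)=\beta\cdot m_{x'}(g_\beta^*D_\beta)=\beta$ with $x'\in C\times_XD$. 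Thus $(h_\beta\circ g_\beta,\,x')\in\mathcal{PC}_0(X)$, and the displayed estimate reads
\begin{equation*}
\frac{\sw_{x'}((h_\beta\circ g_\beta)^*(j_!\sF))}{m_{x'}((h_\beta\circ g_\beta)^*D)}\ \geq\ \sw_D(j_!\sF)-\tfrac{1}{\beta}\,\rk_\Lambda\sF .
\end{equation*}
Letting $\beta\to\infty$ through integers coprime to $p$ gives $\sup_{\mathcal{PC}_0(X)}\geq\sw_D(j_!\sF)$, which together with the upper bound proves \eqref{thmconjbform}.

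The substantive input — the transversal curves of Proposition \ref{findcurve}, the comparison inequality of Proposition \ref{propimm}, and the asymptotic coincidence of Swan and total dimension divisors on tame covers packaged in Proposition \ref{conj2strong} — is already available, so the remaining work is essentially bookkeeping: arranging the affine reduction so that $D$ becomes principal near its generic point while keeping the ratio $\sw_x/m_x$ under control, and checking that the curves furnished by Proposition \ref{conj2strong} genuinely define elements of $\mathcal{PC}_0(X)$ with pull-back multiplicity exactly $\beta$ (this last point also shows $\mathcal{PC}_0(X)$ is nonempty, so that the supremum is meaningful). I expect this multiplicity bookkeeping, i.e.\ tracing $h_\beta^*D=\beta D_\beta$ through the construction, to be the only place demanding genuine care.
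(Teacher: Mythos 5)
Your proof is correct and follows essentially the same route as the paper: the upper bound $\sup \leq \sw_D(j_!\sF)$ comes from Proposition \ref{propimm}, the $n=1$ case is immediate, and for $n\geq 2$ one combines Propositions \ref{findcurve} and \ref{conj2strong} to manufacture curves with ratio arbitrarily close to $\sw_D(j_!\sF)$ and lets $\beta\to\infty$. The only differences are cosmetic but in your favor: you make explicit two things the paper leaves tacit, namely that the upper bound requires invoking Proposition \ref{propimm} separately (the inequality in Proposition \ref{conj2strong} is only for the constructed curve, not arbitrary ones), and that one must first pass to an affine neighborhood of the generic point of $D$ where $D$ is principal before Proposition \ref{conj2strong} applies; your multiplicity bookkeeping $(h_\beta\circ g_\beta)^*D=\beta\cdot g_\beta^*D_\beta$ is also correct and matches the computation in the proof of Proposition \ref{findcurve}.
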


\begin{proof}
When $X$ is a $k$-curve, the theorem is trivial. When $\dim_k X\geq 2$, Proposition \ref{findcurve} and Proposition \ref{conj2strong} implies that, for any positive integer $\beta$ co-prime to $p$, we can find a smooth $k$-curve $C$ and an immersion $g:C\rightarrow X$ such that
$x=C\bigcap D$ is a point, that $m_x(g^*D)=\beta$ and that
\begin{equation*}
\sw_D(j_!\sF)\geq \frac{1}{\beta}\cdot\sw_{x}(g^*(j_!\sF))\geq \sw_D(j_!\sF)-\frac{1}{\beta}\cdot\rk_\Lambda\sF.
\end{equation*}
Taking sufficient large $\beta$, we obtain \eqref{thmconjbform}.

In fact, $\mathcal{PC}_0(X)$ can be considered as a subset of $\mathcal{PC}(X)$ defined in Conjecture \ref{ekconjb}. Hence, Conjecture \ref{ekconjb} is deduced by this theorem under the condition that $X$ is smooth and $D$ is irreducible and smooth.
\end{proof}

\begin{theorem}\label{proofconj1}
Let $Z$ be a smooth $k$-scheme and $h:Z\rightarrow X$ a morphism such that $Z\times_XD$ is a Cartier divisor on $Z$. Then, we have
\begin{equation}\label{formulaconj1}
h^*(\SW_X(j_!\sF))\geq\SW_Z(h^*(j_!\sF)).
\end{equation}
\end{theorem}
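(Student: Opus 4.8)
The plan is to reduce the divisor inequality \eqref{formulaconj1} to the curve case, Proposition \ref{propimm}, by means of the tame‑cover comparison of Proposition \ref{conj2strong}, in the same spirit as the proof of Theorem \ref{thmconjb}. Since $Z$ is regular, a Cartier divisor on $Z$ is effective if and only if its coefficient along each prime divisor is non‑negative, and both $h^*(\SW_X(j_!\sF))$ and $\SW_Z(h^*(j_!\sF))$ are supported on $(h^*D)_{\mathrm{red}}$. Replacing $Z$ by its connected components, which are irreducible as $Z$ is smooth over a perfect field, it therefore suffices to fix an irreducible component $E$ of $(h^*D)_{\mathrm{red}}$, with generic point $z_E$, and to prove $\mathrm{ord}_E(h^*(\SW_X(j_!\sF)))\geq\mathrm{ord}_E(\SW_Z(h^*(j_!\sF)))$, where $\mathrm{ord}_E$ denotes the coefficient along $E$; by definition the right‑hand side equals $\sw_{K_E}(h^*\sF|_{\eta_E})$, with $K_E$ the fraction field of the henselization of $\sO_{Z,z_E}$ and $\eta_E$ its generic point. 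As this concerns only a neighbourhood of $z_E$ and $k$ is perfect, I would shrink $Z$ to a dense affine open $Z^{\circ}\ni z_E$ on which $E$ is smooth over $\Spec(k)$, is cut out by a single function, and is the only irreducible component of $(h^*D)_{\mathrm{red}}$ meeting $Z^{\circ}$; then $h^*D_i|_{Z^{\circ}}=m_iE$ with $m_i=\mathrm{ord}_E(h^*D_i)\in\mathbb Z_{\geq 0}$ and $h^{-1}(U)\cap Z^{\circ}=Z^{\circ}-E$. If $\dim Z=1$, then $h$ is either constant with image in $U$, in which case \eqref{formulaconj1} is trivial, or quasi‑finite, in which case the claim is exactly Proposition \ref{propimm}; so I would assume $\dim Z\geq 2$.

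For a positive integer $\beta$ coprime to $p$, I would then apply Proposition \ref{conj2strong} (together with Proposition \ref{findcurve}) to the smooth scheme $Z^{\circ}$, the smooth irreducible divisor $E$, and the sheaf $h^*\sF|_{Z^{\circ}-E}$. After forming the cyclic cover $\pi_{\beta}\colon Z^{\circ}_{\beta}\to Z^{\circ}$ of degree $\beta$ tamely ramified along $E$, with reduced preimage divisor $E_{\beta}$, Proposition \ref{conj2strong} yields, for a suitable closed point $x'$ of $E_{\beta}$, an immersion $g_{\beta}\colon C\to Z^{\circ}_{\beta}$ of a smooth $k$‑curve such that $C\cap E_{\beta}=\{x'\}$, $m_{x'}(g_{\beta}^*E_{\beta})=1$, the composition $\pi_{\beta}\circ g_{\beta}$ is again an immersion, and, writing $q=h\circ\pi_{\beta}\circ g_{\beta}\colon C\to X$,
\begin{equation*}
\sw_{K_E}(h^*\sF|_{\eta_E})\ \geq\ \frac{1}{\beta}\cdot\sw_{x'}\bigl(q^*(j_!\sF)\bigr)\ \geq\ \sw_{K_E}(h^*\sF|_{\eta_E})-\frac{1}{\beta}\cdot\rk_{\Lambda}\sF,
\end{equation*}
where one uses that the sheaf appearing in Proposition \ref{conj2strong}, pulled back to $C$ along $\pi_{\beta}\circ g_{\beta}$, coincides with $q^*(j_!\sF)$ under the identification $q^{-1}(U)\cap C=C-\{x'\}$. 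Since $C-\{x'\}$ maps into $Z^{\circ}-E\subseteq h^{-1}(U)$ while $q(x')\in h(E)\subseteq D$, the image $q(C)$ is one‑dimensional, so $q$ is quasi‑finite, and $q^*D$ is an effective Cartier divisor on $C$ supported at $x'$. Tracking multiplicities through the tame cover gives $m_{x'}(q^*D_i)=\beta m_i$, and hence $m_{x'}\bigl(q^*(\SW_X(j_!\sF))\bigr)=\beta\sum_i\sw_{D_i}(j_!\sF)\,m_i=\beta\cdot\mathrm{ord}_E\bigl(h^*(\SW_X(j_!\sF))\bigr)$.

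Applying Proposition \ref{propimm} to $j_!\sF$ and the quasi‑finite morphism $q\colon C\to X$ gives $m_{x'}\bigl(q^*(\SW_X(j_!\sF))\bigr)\geq\sw_{x'}\bigl(q^*(j_!\sF)\bigr)$, which, combined with the lower bound displayed above, yields
\begin{equation*}
\mathrm{ord}_E\bigl(h^*(\SW_X(j_!\sF))\bigr)\ \geq\ \sw_{K_E}(h^*\sF|_{\eta_E})-\frac{1}{\beta}\cdot\rk_{\Lambda}\sF.
\end{equation*}
Since $\beta$ ranges over all positive integers coprime to $p$, letting $\beta\to\infty$ gives $\mathrm{ord}_E(h^*(\SW_X(j_!\sF)))\geq\sw_{K_E}(h^*\sF|_{\eta_E})$, and as $E$ was an arbitrary component of $(h^*D)_{\mathrm{red}}$ this shows that $h^*(\SW_X(j_!\sF))-\SW_Z(h^*(j_!\sF))$ is effective, which is \eqref{formulaconj1}. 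The step I expect to require the most care is the bookkeeping surrounding the curve $q$: one must verify that $C$, constructed inside the tame cover $Z^{\circ}_{\beta}$, still maps quasi‑finitely to $X$---so that Proposition \ref{propimm} applies to $q$, even though $h$ itself need not be quasi‑finite---and that the multiplicities $m_i=\mathrm{ord}_E(h^*D_i)$ combine with the tame degree $\beta$ precisely as in the curve case. Once this is granted, the ``sandwich‑and‑pass‑to‑the‑limit'' mechanism is inherited directly from Proposition \ref{conj2strong}, and no geometric input is needed beyond Propositions \ref{propimm}, \ref{findcurve} and \ref{conj2strong}.
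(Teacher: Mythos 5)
Your proposal is correct and follows the same strategy as the paper's proof: apply Proposition \ref{conj2strong} together with Proposition \ref{findcurve} to $Z$ (or an open neighbourhood of the generic point of $E$), produce a curve in a tame cyclic cover that maps to $X$, then combine the two-sided estimate from Proposition \ref{conj2strong} with the curve-case inequality of Proposition \ref{propimm}, and let $\beta\to\infty$. The one organizational difference is that the paper first factors $h$ as $\mathrm{pr}_2\circ\Gamma_h$ and uses Proposition \ref{smpullback} to reduce to the case where $h$ is a closed immersion, which makes the composition $h\circ g$ into $X$ automatically an immersion; you instead keep $h$ general and verify directly that the composed morphism $q=h\circ\pi_\beta\circ g_\beta\colon C\to X$ is quasi-finite (because $q$ is non-constant, having image meeting both $U$ and $D$), which is all Proposition \ref{propimm} requires. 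Both routes are valid, and the multiplicity bookkeeping $m_{x'}(q^*D_i)=\beta m_i$ in your version correctly reproduces the pull-back coefficient $\mathrm{ord}_E(h^*(\SW_X(j_!\sF)))$.
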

\begin{proof}
The case where $\dim_kZ=1$ is proved by Proposition \ref{propimm}. We only consider the case where $\dim_k Z\geq 2$.
The morphism $f:Z\rightarrow X$ is a composition of the graph morphism $\Gamma_f:Z\rightarrow Z\times_kX$ and the second projection $\mathrm{pr}_2:Z\times_kX\rightarrow X$. By Proposition \ref{smpullback}, we have
\begin{equation*}
\SW_{Z\times_kX}(\mathrm{pr}_2^*(j_!\sF))= \mathrm{pr}_2^*(\SW_{X}(j_!\sF)).
\end{equation*}
Hence, it suffices to treat the case where $h:Z\rightarrow X$ is a closed immersion.
This is a local problem for the Zariski topology of $Z$, we may further assume that $E=(Z\times_XD)_{\mathrm{red}}$ is irreducible and smooth over $\Spec(k)$. Let $\beta$ be a positive integer co-prime to $p$. Applying Proposition \ref{conj2strong} to the sheaf $h^*(j_!\sF)$ on $X$, we can find a smooth $k$-curve $C$ and an immersion $g:C\rightarrow Z$ such that $z=C\bigcap E$ is a closed point of $C$, that $m_z(g^*E)=\beta$ and that
\begin{equation}\label{appconj2strong}
\frac{1}{\beta}\cdot\sw_{z}((h\circ g)^*(j_!\sF))\geq \sw_E(h^*(j_!\sF))- \frac{1}{\beta}\cdot\rk_\Lambda\sF.
\end{equation}
Applying Proposition \ref{propimm} to the sheaf $j_!\sF$ on $X$ and the morphism $h\circ g:C\rightarrow X$, we have
\begin{equation}\label{appconj1curve}
m_z(g^*(h^*(\SW_X(j_!\sF))))\geq \sw_z((h\circ g)^*(j_!\sF)).
\end{equation}
By \eqref{appconj2strong} and \eqref{appconj1curve}, we have
\begin{equation*}
\frac{1}{\beta}\cdot m_z(g^*(h^*(\SW_X(j_!\sF))))\geq \sw_E(h^*(j_!\sF))- \frac{1}{\beta}\cdot\rk_\Lambda\sF,
\end{equation*}
i.e., we have
\begin{equation*}
h^*(\SW_X(j_!\sF))\geq \SW_Z(h^*(j_!\sF))-\frac{1}{\beta}\cdot\rk_\Lambda\sF\cdot E.
\end{equation*}
Passing $\beta\rightarrow +\infty$, we obtain \eqref{formulaconj1}.
\end{proof}

\begin{theorem}[{cf. Conjecture \ref{ekconja}}]\label{thmconja}
Let $R$ be a Cartier divisor on $X$ supported on $D$. Then, the following there conditions are equivalent:
\begin{itemize}
\item[(1)] The ramification of $\sF$ along $D$ is bounded by $R$ in the sense of Deligne (\ref{ramdeligne});
\item[(2)]
We have
\begin{equation*}
R\geq \SW_X(j_!\sF);
\end{equation*}
\item[(3)]
For any morphism $h:Z\rightarrow X$ of smooth $k$-schemes such that $Z\times_XD$ is a Cartier divisor on $Z$, we have
\begin{equation*}
h^*R\geq \SW_{Z}(h^*(j_!\sF)).
\end{equation*}
\end{itemize}
\end{theorem}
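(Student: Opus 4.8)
The strategy is to prove the cycle of implications $(1)\Rightarrow(2)\Rightarrow(3)\Rightarrow(1)$; the remaining implication $(3)\Rightarrow(2)$ is immediate by taking $h=\mathrm{id}_X$, since $X\times_XD=D$ is a Cartier divisor on $X$. The implication $(2)\Rightarrow(3)$ is formal: if $h:Z\to X$ is a morphism of smooth $k$-schemes such that $Z\times_XD$ is a Cartier divisor on $Z$, then pull-back along $h$ is defined on Cartier divisors supported on $D$ and preserves effectivity (here the integrality of the connected components of $Z$ is used), so $R\geq\SW_X(j_!\sF)$ gives $h^*R\geq h^*(\SW_X(j_!\sF))$; combining this with the inequality $h^*(\SW_X(j_!\sF))\geq\SW_Z(h^*(j_!\sF))$ of Theorem \ref{proofconj1} yields $(3)$. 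For $(3)\Rightarrow(1)$, let $g:\wt C\to X$ be an element of $\mathcal C(X)$, so that $\wt C$ is the normalization of a $1$-dimensional integral closed subscheme $C\subseteq X$ with $C\times_XD$ an effective Cartier divisor on $C$. Since $k$ is perfect, $\wt C$ is a smooth $k$-curve; moreover $C\not\subseteq D$, so the pull-back of $C\times_XD$ along the dominant morphism $\wt C\to C$ is the effective Cartier divisor $\wt C\times_XD$ on $\wt C$. Hence $g:\wt C\to X$ is an admissible test morphism in $(3)$, and $(3)$ gives $g^*R\geq\SW_{\wt C}(g^*(j_!\sF))$; as $g$ ranges over $\mathcal C(X)$ this is exactly the statement that the ramification of $\sF$ along $D$ is bounded by $R$ in the sense of Deligne.

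The main implication is $(1)\Rightarrow(2)$, and this is where Theorem \ref{thmconjb} is used. Since $R$ and $\SW_X(j_!\sF)$ are both supported on $D=\sum_{i\in I}D_i$, it suffices to prove $\mathrm{ord}_{D_i}(R)\geq\sw_{D_i}(j_!\sF)$ for each $i\in I$. Fix $i$. Because $D$ has simple normal crossings and $D_i$ is generically smooth over $\Spec(k)$, we may replace $X$ by a Zariski open neighbourhood $X'$ of the generic point of $D_i$ on which $D':=D_i\cap X'=D\cap X'$ is irreducible and smooth over $\Spec(k)$; writing $j':U'=X'-D'\to X'$, the relevant coefficients are unchanged: $\mathrm{ord}_{D_i}(R)=\mathrm{ord}_{D'}(R|_{X'})$ and $\sw_{D_i}(j_!\sF)=\sw_{D'}(j'_!(\sF|_{U'}))$. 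Now let $(g:C\to X',\,x)$ be any element of $\mathcal{PC}_0(X')$. Let $\bar C\subseteq X$ be the closure of the image of $C$, with its reduced structure; then $\bar C$ is a $1$-dimensional integral closed subscheme of $X$ with $\bar C\not\subseteq D$, so $\bar C\times_XD$ is an effective Cartier divisor on $\bar C$ and the normalization $\nu:\wt{\bar C}\to X$ lies in $\mathcal C(X)$. Since $C$ is smooth, the immersion $C\to X'\hookrightarrow X$ identifies $C$ with an open subscheme of $\wt{\bar C}$, and under this identification $\nu$ restricts near $x$ to the composite $C\to X'\hookrightarrow X$. Consequently the coefficient of $x$ in $\nu^*R$ equals $\mathrm{ord}_{D'}(R|_{X'})\cdot m_x(g^*D')$, while the coefficient of $x$ in $\SW_{\wt{\bar C}}(\nu^*(j_!\sF))$ equals $\sw_x(g^*(j'_!(\sF|_{U'})))$. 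Applying hypothesis $(1)$ to $\nu\in\mathcal C(X)$ and reading off coefficients at $x$ gives
\begin{equation*}
\mathrm{ord}_{D'}(R|_{X'})\cdot m_x(g^*D')\;\geq\;\sw_x(g^*(j'_!(\sF|_{U'}))).
\end{equation*}
Dividing by $m_x(g^*D')$ and taking the supremum over $\mathcal{PC}_0(X')$, Theorem \ref{thmconjb} gives $\mathrm{ord}_{D'}(R|_{X'})\geq\sw_{D'}(j'_!(\sF|_{U'}))$, hence $\mathrm{ord}_{D_i}(R)\geq\sw_{D_i}(j_!\sF)$, which completes the proof of $(1)\Rightarrow(2)$.

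I expect the main obstacle to be the comparison carried out in $(1)\Rightarrow(2)$: one must check carefully that an immersion from a smooth $k$-curve into the open set $X'$ coincides, in a neighbourhood of the relevant point, with the normalization map of its schematic closure in $X$, and that under this identification the multiplicity $m_x$ of the pulled-back divisor and the local Swan conductor $\sw_x$ are unchanged, so that Deligne's bound over $\mathcal C(X)$ genuinely controls the supremum over $\mathcal{PC}_0(X')$ appearing in Theorem \ref{thmconjb}. The remaining steps are formal manipulations with effective Cartier divisors and their pull-backs, together with direct invocations of Theorems \ref{proofconj1} and \ref{thmconjb}.
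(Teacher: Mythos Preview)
Your proposal is correct and follows essentially the same route as the paper: the equivalence of $(2)$ and $(3)$ via Theorem~\ref{proofconj1}, and $(1)\Leftrightarrow(2)$ via the sharpness of $\SW_X(j_!\sF)$ established in Theorem~\ref{thmconjb} (with Proposition~\ref{conj2strong} behind it). The paper's proof is terse and leaves implicit exactly the point you flag at the end---that an immersed smooth curve $C\hookrightarrow X'$ sits as an open subscheme of the normalization $\wt{\bar C}$ of its closure in $X$, so that the Deligne bound over $\mathcal C(X)$ controls the supremum over $\mathcal{PC}_0(X')$---and your write-up makes this explicit and handles it correctly.
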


\begin{proof}
Proposition \ref{propimm} implies that the ramification of $\sF$ along $D$ is bounded by the Swan divisor $\SW_X(j_!\sF)$ in the sense of Deligne, i.e., $(2)\Rightarrow  (1)$. Proposition \ref{conj2strong} and Theorem \ref{thmconjb} implies that $\SW_X(j_!\sF)$ is the sharp bound of the ramification of $\sF$ along $D$ in the sense of Deligne, i.e., $(1)\Rightarrow (2)$.
The equivalence of (2) and (3) is a deduction of Theorem \ref{proofconj1}.

This theorem implies Conjecture \ref{ekconja} under the condition that $X$ is smooth and $D$ is a divisor with simple normal crossing.
\end{proof}

\section{Criterion for the equality of the Swan conductor and the total dimension}

\subsection{}
In this section, we assume that $k$ is perfect. Let $X$ be a smooth $k$-scheme, $D$ an irreducible Cartier divisor on $X$, $\xi$ the generic point of $D$, $K$ the function field of the henselization $X_{(\xi)}$, $\overline K$ an separable closure of $K$, $\eta=\Spec(K)$ the generic point of  $X_{(\xi)}$, $G_K$ the Galois group of $\overline K/K$, $P_K$ the wild inertia subgroup of $G_K$, $U$ the complement of $D$ in $X$, $j:U\rightarrow X$ the canonical injection and $\sF$ a locally constant and constructible sheaf of $\Lambda$-modules on $U$.

\begin{proposition}\label{sw=dimtot}
The following three conditions are equivalent:
\begin{itemize}
\item[(i)]
The slope decomposition and logarithmic slope decomposition of $\sF|_{\eta}$ associated to the continuous $G_K$-action are the same;
\item[(ii)]
We have
\begin{equation}\label{sweqdimtotbyss}
\sw_D(j_!\sF)=\dimtot_D(j_!\sF);
\end{equation}
\item[(iii)]
After replacing $X$ by an open neighborhood of $\xi$ such that $D$ is smooth over $\Spec(k)$, the conormal bundle $\bT^*_DX$ is not contained in the singular support $SS(j_!\sF)$.
\end{itemize}
\end{proposition}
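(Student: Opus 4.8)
The plan is to establish the chain $(i)\Leftrightarrow(ii)\Leftrightarrow(iii)$. Throughout, write $M=\sF|_{\eta}$ for the associated finitely generated $\Lambda$-module with continuous $G_K$-action, and, after enlarging $\Lambda$, assume it contains a primitive $p$-th root of unity; this changes none of the three conditions nor the invariants $\sw_K M$, $\dimtot_K M$. For $(i)\Leftrightarrow(ii)$: the henselization of $\sO_{X,\xi}$ along a branch of $D\times_k\bar k$ is an \emph{unramified} extension of $\sO_K$, so by Proposition \ref{nonlogext} and Proposition \ref{logext} it preserves the ramification and logarithmic ramification filtrations, hence the slope decomposition and the logarithmic slope decomposition; thus $\sw_D(j_!\sF)=\sw_K M$ and $\dimtot_D(j_!\sF)=\dimtot_K M$, and condition $(i)$ says precisely that the slope decomposition of $M$ coincides with its logarithmic slope decomposition. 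The equivalence with $(ii)$ is then exactly the criterion recalled in \ref{swdimtot}.

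For the geometric equivalence, replace $X$ by an open neighbourhood of $\xi$ so that $D$ is smooth over $\Spec(k)$ (the condition in $(iii)$ is insensitive to this choice), and remove a further closed subset of codimension $\geq 2$ in $X$ (hence not containing $\xi$) so that the ramification of $\sF$ along $D$ is non-degenerate (\ref{wrccss}). On this neighbourhood the formula \eqref{wrcbcc} computes $CC(j_!\sF)$, and $SS(j_!\sF)$ is its support. Every coefficient in \eqref{wrcbcc} is, up to the global sign $(-1)^n$, a non-negative rational number, so $\bT^*_DX$ is a component of $SS(j_!\sF)$ if and only if either $M^{(1)}=M^{P_K}\neq0$, or there exist $r>1$ and $\chi\in X(r)$ with $M^{(r)}_{\chi}\neq0$ and $\overline L_{\chi}=\bT^*_DX$. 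Next I identify the last condition group-theoretically. The conormal line $\bT^*_DX$ restricted to $\xi$ is the kernel of the canonical projection $\pi\colon\Omega^1_{\sO_K}\otimes_{\sO_K}F\to\Omega^1_F$; since $\overline L_{\chi}$ and $\bT^*_DX$ are irreducible closed conical subsets of dimension $n$ with the same fibre over $D$ as soon as $\mathrm{char}(\chi)$ is conormal, $\overline L_{\chi}=\bT^*_DX$ is equivalent to $\mathrm{char}(\chi)$ taking values in $\ker(\pi)\otimes_F\overline F$, i.e. to $\pi\circ\mathrm{char}(\chi)=0$. By the commutative diagram of Proposition \ref{diaglnl} together with the injectivity of $\iota$ and of $\mathrm{rsw}$, this holds if and only if $\theta_r^{\vee}(\chi)=\chi\circ\theta_r=0$ in $\Hom_{\mathbb F_p}(\Gr^r_{\log}G_K,\mathbb F_p)$.

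It remains to match the resulting dichotomy with $(ii)$. Using additivity of $\sw_K$ and of $\dimtot_K$ over the slope decomposition $M=\bigoplus_{r\geq1}M^{(r)}$, together with $\dimtot_K M^{(r)}=r\cdot\dim_\Lambda M^{(r)}$, the criterion of \ref{swdimtot} applied to each isoclinic summand shows that $\sw_K M=\dimtot_K M$ if and only if $M^{(1)}=0$ and every $M^{(r)}$ with $r>1$ is logarithmically isoclinic of logarithmic slope $r$. Now $G^{r+}_{K,\log}\subseteq G^{r+}_K$ acts trivially on $M^{(r)}$, and the action of $G^r_{K,\log}$ on $M^{(r)}$ factors as $G^r_{K,\log}\to\Gr^r_{\log}G_K\xrightarrow{\theta_r}\Gr^r G_K\to\mathrm{GL}(M^{(r)})$; hence $G^r_{K,\log}$ acts on $M^{(r)}_{\chi}$ through the character $\chi\circ\theta_r$, so $M^{(r)}$ is logarithmically isoclinic of logarithmic slope $r$ if and only if $\chi\circ\theta_r\neq0$ for every center character $\chi$ of $M^{(r)}$. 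Comparing this with the dichotomy of the previous paragraph gives $(iii)\Leftrightarrow(ii)$ in both directions, closing the circle.

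The step I expect to require the most care is the identification, in the second paragraph, of the geometric condition $\overline L_{\chi}=\bT^*_DX$ with the algebraic one $\theta_r^{\vee}(\chi)=0$: this means pinning down the conormal line inside $\bT^*X\times_X\xi$ as $\ker\pi$, checking that once $\mathrm{char}(\chi)$ is conormal its closure $\overline L_{\chi}$ fills out all of $\bT^*_DX$ (a containment of irreducible cones of equal dimension), and only then reading off the equivalence from Proposition \ref{diaglnl}. By contrast, the logarithmic-slope bookkeeping in the last paragraph uses only the inclusions of Proposition \ref{lnlcomp}, the factorization of the $G^r_{K,\log}$-action through $\theta_r$, and additivity of the two invariants, and should be routine.
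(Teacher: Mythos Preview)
Your proof is correct, and the argument for $(i)\Leftrightarrow(ii)$ and for $(i)\Rightarrow(iii)$ is essentially the paper's. The genuine difference is in the direction $(iii)\Rightarrow(ii)$.

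The paper proves $(iii)\Rightarrow(ii)$ by a geometric limit argument: it takes a cyclic cover $h_\beta:X_\beta\to X$ of degree $\beta$ (prime to $p$) tamely ramified along $D$, observes that $(iii)$ makes $h_\beta$ $SS(j_!\sF)$-transversal, then uses Proposition~\ref{findcurve} to find a curve $C\hookrightarrow X_\beta$ which is $h_\beta^\circ SS(j_!\sF)$-transversal and whose composite $C\to X$ is still an immersion. Proposition~\ref{conj2strong} gives $\sw_D(j_!\sF)\geq\tfrac1\beta\sw_{x'}((h_\beta\circ g_\beta)^*j_!\sF)$, while Proposition~\ref{saitomain} applied to $C\to X$ gives $\dimtot_{x'}((h_\beta\circ g_\beta)^*j_!\sF)=\beta\cdot\dimtot_D(j_!\sF)$; combining these with \eqref{swdimtotcurve} and letting $\beta\to\infty$ yields \eqref{sweqdimtotbyss}.

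You instead run the characteristic-cycle argument in both directions at once: since all coefficients in \eqref{wrcbcc} have the same sign, $\bT^*_DX\not\subseteq SS(j_!\sF)$ is equivalent to $M^{(1)}=0$ together with $\overline L_\chi\neq\bT^*_DX$ for every center character $\chi$; via Proposition~\ref{diaglnl} this becomes $\chi\circ\theta_r\neq 0$ for every such $\chi$, which you then identify with $(ii)$ through the factorization of the $G^r_{K,\log}$-action on $M^{(r)}_\chi$ through $\chi\circ\theta_r$. This is more economical---it uses only \ref{wrccss} and Proposition~\ref{diaglnl}, avoiding Propositions~\ref{findcurve}, \ref{conj2strong} and \ref{saitomain} entirely. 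The paper's route, by contrast, exhibits $(iii)\Rightarrow(ii)$ as an application of the curve-restriction machinery developed in \S6, which is the thematic point of the section; your argument shows the equivalence is in fact already implicit in the structure of \eqref{wrcbcc}.
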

\begin{proof}
The equivalence of (i) and (ii) is proved in \ref{swdimtot}.

(iii) $\Rightarrow $ (ii). By \eqref{dimtotbc} and \eqref{swbc}, we may assume that $k$ is algebraically closed. This is a local problem for the Zariski topology of $X$. We may replace $X$ by an affine open neighborhood of $\xi$ such that $D$ is smooth over $\Spec(k)$. Let $f$ be an element of $\Gamma(X,\sO_X)$ that defines $D$,  $\beta\geq 2$ a positive integer co-prime to $p$,
\begin{equation*}
X_{\beta}=\Spec(\sO_X[T]/(T^\beta-f))
\end{equation*}
 a cyclic cover of $X$ of degree $\beta$ tamely ramified along $D$,  $h_\beta:X_{\beta}\rightarrow X$ the canonical projection, $U_{\beta}=h_{\beta}^{-1}(U)$ and $D_\beta$ the smooth divisor $(T)=(D\times_XX_\beta)_{\mathrm{red}}$. Notice that $h_{\beta}|_{D_{\beta}}:D_{\beta}\rightarrow D$ is an isomorphism. After shrinking  $X$ again, we may assume that the ramification of $\sF$ (resp. $\sF|_{U_{\beta}}$) along $D$ (resp. $D_{\beta}$) is non-degenerate (\ref{wrccss}) and that, for any point $x'$ of $D_{\beta}$ with the image $x$ in $D$, the fiber of the conormal bundle $\bT^*_DX\times_Dx$ is not contained in $SS(j_!\sF)\times_Xx$. The later implies that $h_{\beta}:X_{\beta}\rightarrow X$ is $SS(j_!\sF)$-transversal. Notice that $SS(h^*_{\beta}(j_!\sF))\subseteq h^\circ_{\beta}(SS(j_!\sF))$ (\cite[Lemma 2.2]{bei}) and, for any point $x'\in D_{\beta}$, the fiber $h_{\beta}^\circ(SS(j_!\sF))\times_{X_{\beta}}x'$ is a union of $1$-dimensional $k(x')$-vector spaces in $\bT^*_{x'}X_{\beta}$ (\ref{deftrans}).

By Proposition \ref{findcurve}, we can find a closed point $x'$ of $D_{\beta}$ with image $x$ in $D$, a smooth $k$-curve $C$ and an immersion $g_{\beta}:C\rightarrow X_{\beta}$ such that
\begin{itemize}
\item[(i)] $C\bigcap D_{\beta}=x'$ and $m_{x'}(g^*_{\beta}D_{\beta})=1$ (i.e., $m_{x'}((h_{\beta}\circ g_{\beta})^*D)=\beta$);
\item[(ii)] $g_{\beta}:C\rightarrow X_{\beta}$ is $h_{\beta}^\circ(SS(j_!\sF))$-transversal at $x'$;
\item[(iii)] the composition $h_{\beta}\circ g_{\beta}:C\rightarrow X$ is also an immersion.
\end{itemize}

By Proposition \ref{conj2strong}, we have
\begin{equation}\label{usestrong}
\sw_D(j_!\sF)\geq \frac{1}{\beta}\cdot\sw_{x'}((h_{\beta}\circ g_{\beta})^*(j_!\sF))\geq \sw_D(j_!\sF)-\frac{1}{\beta}\cdot\rk_\Lambda\sF.
\end{equation}
Since $h_{\beta}:X_{\beta}\rightarrow X$ is $SS(j_!\sF)$-transversal and $g_{\beta}:C\rightarrow X_{\beta}$ is $h_{\beta}^\circ(SS(j_!\sF))$-transversal at $x'$, we have $h_{\beta}\circ g_{\beta}:C\rightarrow X$ is $SS(j_!\sF)$-transversal at $x=g_{\beta}(x')$. Applying \cite[Corollary 3.9.2]{wr} (cf. Theorem \ref{saitomain}) to the sheaf $j_!\sF$ on $X$ and the closed immersion $h_{\beta}\circ g_{\beta}:C\rightarrow X$, we obtain
\begin{equation}\label{usesaito2}
\dimtot_{x'}((h_{\beta}\circ g_{\beta})^*(j_!\sF))=\beta\cdot\dimtot_D(j_!\sF).
\end{equation}
Since $C_{\beta}$ is a smooth $k$-curve, we have  \eqref{swdimtotcurve}
\begin{equation}\label{applycurveswdt}
\dimtot_{x'}((h_{\beta}\circ g_{\beta})^*(j_!\sF))=\sw_{x'}((h_{\beta}\circ g_{\beta})^*(j_!\sF))+\rk_\Lambda\sF.
\end{equation} By \eqref{usestrong}, \eqref{usesaito2} and \eqref{applycurveswdt}, we get
\begin{equation*}
\sw_D(j_!\sF)\geq \dimtot_D(j_!\sF)-\frac{1}{\beta}\cdot\rk_\Lambda\sF\geq \sw_D(j_!\sF)-\frac{1}{\beta}\cdot\rk_\Lambda\sF.
\end{equation*}
Passing $\beta\rightarrow\infty$, we obtain \eqref{sweqdimtotbyss}.

(i) $\Rightarrow$ (iii). We denote by $M$ the finitely generated $\Lambda$-module with a continuous $G_K$-action corresponding to $\sF|_{\eta}$. Since $M^{(r)}=0$ for $r<1$, we have $M^{P_K}=M^{(0)}_{\log}=M^{(0)}=0$. Hence, $M$ is purely wildly ramified. Fix an rational number $r>1$ such that $M^{(r)}\neq 0$. By enlarging $\Lambda$, we may assume that it contains $p$-th roots of unity. Let $M^{(r)}=\bigoplus_{\chi\in X(r)}M^{(r)}_{\chi}$ be the center character decomposition \eqref{ccdecomp}. Since the two filtrations are the same, the action of $\Gr^r_{\log}G_K$ on some $M^{(r)}_{\chi}\neq 0$ is faithful. Hence, for every non-trivial character $\chi:\Gr^r G_K\rightarrow \Lambda^{\times}$ appears in $M^{(r)}$, the composition $\chi\circ\theta_r:\Gr^r_{\log}G_K\rightarrow \Lambda^{\times}$ is non-trivial, where $\theta_r:\Gr^r_{\log}G_K\rightarrow \Gr^rG_K$ denotes the map induced by the inclusion $G^r_{K,\log}\subseteq G^r_K$. By Proposition \ref{diaglnl}, the image of $\mathrm{char}(\chi)\in \Hom_{\overline F}(\fm^{r}_{\overline K}/\fm^{r+}_{\overline K},\Omega^1_{\sO_{X,\xi}}\otimes_{\sO_{X,\xi}}\overline F)$ in $\Hom_{\overline F}(\fm^{r}_{\overline K}/\fm^{r+}_{\overline K},\Omega^1_F(\log)\otimes_{F}\overline F)$ induced by the composition of canonical maps
\begin{equation*}
\phi:\Omega^1_{\sO_{X,\xi}}\otimes_{\sO_{X,\xi}}F\rightarrow\Omega^1_F\rightarrow\Omega^1_F(\log)
\end{equation*}
 is $\mathrm{rsw}(\chi\circ\theta_r)$. It is non-zero since the refined Swan conductor is an injective map. Let $f$ be a uniformizer of $\sO_{X,\xi}$. The kernel $\ker(\phi)$ is the $1$-dimensional $F$-vector space generated by $df\otimes 1$. Hence, the line $L_{\chi}$ in $\bT^*X\times_X\Spec(F_{\chi})$ corresponding to $\mathrm{char}(\chi)$ is not $\bT^*_DX\times_D\Spec(F_{\chi})$, since $df\otimes 1$ is a base of the $1$-dimensional fiber $\bT^*_DX\times_D \xi\subseteq \bT^*_\xi X$. Therefore, for each center character $\chi$ appears in $M$, the cycle $[\overline L_{\chi}]$ in $CC(j_!\sF)$ contributed by $\chi$ is not $\bT^*_DX$ in a Zariski neighborhood of $\xi\in X$ \eqref{wrcbcc}. Since $SS(j_!\sF)$ is the support of $CC(j_!\sF)$, we obtain (iii).
\end{proof}

\begin{proposition}\label{propswdim=dimtot}
If we have
\begin{equation}\label{swdim=dimtot}
\sw_D(j_!\sF)+\rk_{\Lambda}\sF=\dimtot_D(j_!\sF),
\end{equation}
then, after replacing $X$ by an open neighborhood of $\xi$ such that $D$ is smooth over $\Spec(k)$ and that the ramification of $\sF$ along $D$ is non-degenerate (\ref{wrccss}), we have
\begin{equation}\label{sslikecurve}
SS(j_!\sF)=\bT^*_XX\bigcup\bT^*_DX.
\end{equation}

When $\sF$ has rank $1$, the converse of the above statement is also true.
\end{proposition}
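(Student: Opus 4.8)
The plan is to read off $SS(j_!\sF)$ from the explicit characteristic cycle formula \eqref{wrcbcc}, using that in the non-degenerate situation $SS(j_!\sF)$ is the support of $CC(j_!\sF)$ (\ref{wrccss}). So I would first shrink $X$ to an open neighbourhood of $\xi$ over which $D$ is smooth and the ramification of $\sF$ along $D$ is non-degenerate, and enlarge $\Lambda$ so that it contains a primitive $p$-th root of unity (this affects neither side of \eqref{sslikecurve}, nor $\sw_D(j_!\sF)$, nor $\dimtot_D(j_!\sF)$). Writing $M=\sF|_\eta$ and keeping the notation of \ref{wrccss}, \eqref{wrcbcc} then reads
\begin{equation*}
CC(j_!\sF)=(-1)^n\Bigl(\rk_\Lambda\sF\cdot[\bT^*_XX]+\dim_\Lambda M^{(1)}\cdot[\bT^*_DX]+\sum_{r>1}\sum_{\chi\in X(r)}\frac{r\,\dim_\Lambda M^{(r)}_\chi}{[F_\chi:F]}\,[\overline L_\chi]\Bigr).
\end{equation*}
Since $SS(j_!\sF)$ is the union of the supports of the terms with non-zero coefficient and $[\bT^*_XX]$ always occurs (coefficient $\pm\rk_\Lambda\sF\neq0$), it suffices, granting \eqref{swdim=dimtot}, to prove that every $\overline L_\chi$ occurring with non-zero coefficient equals $\bT^*_DX$: the coefficient of $[\bT^*_DX]$ in the resulting expression is then $\dim_\Lambda M^{(1)}+\sum_{r>1}\sum_\chi\frac{r\,\dim_\Lambda M^{(r)}_\chi}{[F_\chi:F]}$, which is positive because $M\neq0$ (if $M^{(1)}=0$ then $M$ is purely wildly ramified and some $M^{(r)}_\chi$ with $r>1$ is non-zero).

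To control the $\overline L_\chi$ I would use \eqref{swdim=dimtot}. Decomposing $M=\bigoplus_{r\geq1}M^{(r)}$, the logarithmic slope decomposition is compatible with this, so $\dimtot_KM-\sw_KM=\sum_r(\dimtot_KM^{(r)}-\sw_KM^{(r)})$; and since $G^{r+1}_K\subseteq G^r_{K,\log}\subseteq G^r_K$ every logarithmic slope of $M^{(r)}$ lies in $[r-1,r]$ (cf.\ \ref{swdimtot}), so $\sw_KM^{(r)}\geq(r-1)\dim_\Lambda M^{(r)}$ and hence $\dimtot_KM^{(r)}-\sw_KM^{(r)}\leq\dim_\Lambda M^{(r)}$, with equality exactly when $M^{(r)}$ is logarithmically isoclinic of slope $r-1$. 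Thus $\dimtot_KM-\sw_KM\leq\dim_\Lambda M$, and by \eqref{swdim=dimtot} this is an equality, so every $M^{(r)}$ is logarithmically isoclinic of slope $r-1$. In particular, for $r>1$ the group $G^r_{K,\log}$ acts trivially on $M^{(r)}$, so each center character $\chi\in X(r)$ of $M$ has $\chi\circ\theta_r$ trivial. Then the commutative square of Proposition \ref{diaglnl} together with the injectivity of $\mathrm{rsw}$ forces $\Hom(\mathrm{id},\phi)(\mathrm{char}(\chi))=\mathrm{rsw}(\chi\circ\theta_r)=0$, i.e.\ $\mathrm{char}(\chi)$ takes values in $\ker\phi=\mathrm{span}\{df\otimes1\}$ for a uniformiser $f$ of $\sO_{X,\xi}$; hence $L_\chi$ is the conormal line and $\overline L_\chi=\bT^*_DX$. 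This gives \eqref{sslikecurve}. The argument is the quantitative sharpening of the implication (i)$\Rightarrow$(iii) in Proposition \ref{sw=dimtot}; the only real obstacle is the coefficient bookkeeping in \eqref{wrcbcc} and the translation of \eqref{swdim=dimtot} into ``each $M^{(r)}$ is logarithmically isoclinic of slope $r-1$'', after which everything follows from the cited inputs.

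For the converse when $\rk_\Lambda\sF=1$ I would argue directly from Proposition \ref{sw=dimtot} without recomputing the characteristic cycle. If $SS(j_!\sF)=\bT^*_XX\cup\bT^*_DX$ then $\bT^*_DX\subseteq SS(j_!\sF)$, so condition (iii) of Proposition \ref{sw=dimtot} fails, hence so does condition (ii): $\sw_D(j_!\sF)\neq\dimtot_D(j_!\sF)$. On the other hand \eqref{swdimtotcomp} gives $0\leq\dimtot_D(j_!\sF)-\sw_D(j_!\sF)\leq\dim_\Lambda\sF=1$, and both numbers are integers by Theorem \ref{xlmain} since we are in characteristic $p$; therefore $\dimtot_D(j_!\sF)-\sw_D(j_!\sF)=1=\rk_\Lambda\sF$, which is \eqref{swdim=dimtot}.
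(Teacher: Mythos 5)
Your argument is correct and follows the paper's proof essentially verbatim: translate \eqref{swdim=dimtot} into the statement that each $M^{(r)}$ is logarithmically isoclinic of slope $r-1$, use Proposition \ref{diaglnl} to conclude that every $\overline L_\chi$ coincides with $\bT^*_DX$, read off $SS(j_!\sF)$ as the support of \eqref{wrcbcc}, and deduce the rank-$1$ converse from Proposition \ref{sw=dimtot} together with integrality and \eqref{swdimtotcomp}. One minor slip: the injectivity of $\mathrm{rsw}$ plays no role here (it is used in the (i)$\Rightarrow$(iii) direction of Proposition \ref{sw=dimtot}, not in this proposition)---once $\chi\circ\theta_r$ is the trivial character, $\mathrm{rsw}(\chi\circ\theta_r)=0$ is automatic, and the commutativity of the diagram in Proposition \ref{diaglnl} alone gives $\Hom(\mathrm{id},\phi)(\mathrm{char}(\chi))=0$, hence $\mathrm{char}(\chi)$ takes values in $\ker\phi=\mathrm{span}\{df\otimes 1\}$.
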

\begin{proof}
We denote by $M$ the finitely generated $\Lambda$-module with a continuous $G_K$-action corresponding to $\sF|_{\eta}$. The equality \eqref{swdim=dimtot} implies that, for each $r\in \mathbb Q_{\geq 1}$, we have $M^{(r)}=M^{(r-1)}_{\log}$. By enlarging $\Lambda$, we may assume that it contains $p$-th roots of unity. Let $M^{(r)}=\bigoplus_{\chi\in X(r)}M^{(r)}_{\chi}$ be the center character decomposition \eqref{ccdecomp}. Fix an rational number $r>1$ such that $M^{(r)}\neq 0$.  Since $M^{(r)}=M^{(r-1)}_{\log}$, for each center character $\chi:\Gr^r G_K\rightarrow \Lambda^{\times}$ appears in $M^{(r)}$, the composition $\chi\circ\theta_r:\Gr^r_{\log} G_K\rightarrow \Lambda^{\times}$ is a trivial character, where $\theta_r:\Gr^r_{\log} G_K\rightarrow \Gr^r G_K$ denotes the canonical map induced by the inclusion $G^r_{K,\log}\subseteq G^r_K$. By Propostion \ref{diaglnl}, the image of $\mathrm{char}(\chi)\in \Hom_{\overline F}(\fm_{\overline K}^r/\fm_{\overline K}^{r+},\Omega^1_{\sO_{X,\xi}}\otimes_{\sO_{X,\xi}}\overline F)$ in $\Hom_{\overline F}(\fm_{\overline K}^r/\fm_{\overline K}^{r+},\Omega^1_{F}(\log)\otimes_F\overline F)$ induced by the composition of canonical maps
\begin{equation*}
\phi:\Omega^1_{\sO_{X,\xi}}\otimes_{\sO_{X,\xi}}F\rightarrow\Omega^1_F\rightarrow\Omega^1_F(\log)
\end{equation*}
is $\mathrm{rsw}(\chi\circ\theta_r)=0$. Let $f$ be a uniformizer of $\sO_{X,\xi}$. The kernel $\ker(\phi)$ is the $1$-dimensional $F$-vector space generated by $df\otimes 1$. Hence,
the line in $\bT^*X\times_X\Spec(F_\chi)$ corresponding to $\mathrm{char}(\chi)$ is $\bT^*_DX\times_D\Spec(F_\chi)$, since $df\otimes 1$ is a base of the $1$-dimensional fiber $\bT^*_DX\times_D \xi\subseteq \bT^*_\xi X$. Therefore, the cycle $[\overline L_{\chi}]$ in $CC(j_!\sF)$ contributed by $\chi$ is $\bT^*_DX$ in a Zariski neighborhood of $\xi\in X$ \eqref{wrcbcc}. In summary, the cycle in $CC(j_!\sF)$ contributed by each center character of $M$ supported in $\bT^*_DX$. On the other hand, the cycle in $CC(j_!\sF)$ contributed by the tame part $M^{(1)}$ is also supported on $\bT^*_DX$. Hence, after replacing $X$ by a Zariski neighborhood of $\xi\in X$ such that $D$ is smooth over $\Spec(k)$ and the ramification of $\sF$ along $D$ is non-degenerate, we have
$CC(j_!\sF)=(-1)^{\dim_kX}([\bT^*_XX]+m\cdot[\bT^*_DX])$ for some positive integer $m$. Since $SS(j_!\sF)$ is the support of $CC(j_!\sF)$, we obtain \eqref{sslikecurve}.

When $\sF$ has rank $1$, we have either (Theorem \ref{xlmain} and \eqref{swdimtotcomp})
\begin{equation*}
\sw_D(j_!\sF)+1=\dimtot_D(j_!\sF)\ \ \ \textrm{or}\ \ \ \sw_D(j_!\sF)=\dimtot_D(j_!\sF).
\end{equation*}
Hence, the converse of the statement in Proposition \ref{propswdim=dimtot} is deduced by Proposition \ref{sw=dimtot}.
\end{proof}

\begin{remark}
When $\sF$ has rank $1$, Proposition \ref{sw=dimtot} and Proposition \ref{propswdim=dimtot} are known in \cite{ya}.
\end{remark}

\section{Semi-continuity of Swan divisors}

\subsection{}
In this section, we assume that $k$ is perfect. Let $S$ denotes an irreducible $k$-scheme, $\eta$ its generic point, $g:X\rightarrow S$ a smooth morphism of $k$-schemes, $D$ an effective Cartier divisor on $X$ relative to $S$, $U$ the complement of $D$ in $X$ and $j:U\rightarrow X$ the canonical injection. Let $\{D_i\}_{i\in I}$ $(I=\{1,2,\cdots,m\})$ be the set of irreducible components of $D$. We assume that, for each $i\in I$, the restriction $g|_{D_i}:D_i\rightarrow S$ is smooth. For any point $s$ of $S$, we denote by $\iota_s:X_s\rightarrow X$ the canonical injection. For any geometric point $\bar s\rightarrow S$, we denote by $\iota_{\bar s}:X_{\bar s}\rightarrow X$ the base change of $\bar s\rightarrow S$ by $g:X\rightarrow S$.

Let $\sF$ be a locally constant and constructible sheaf of $\Lambda$-modules on $U$. We define the {\it generic total dimension divisor of} $j_!\sF$ on $X$ and denote by $\GDT_g(j_!\sF)$ the unique Cartier divisor of $X$ supported on $D$ such that $\iota^*_\eta(\GDT_g(j_!\sF))=\DT_{X_{\eta}}(j_!\sF|_{X_{\eta}})$ (\ref{dtdiv}). We define the {\it generic logarithmic total dimension divisor of} $j_!\sF$ on $X$ and denote by $\GLDT_g(j_!\sF)$ the unique Cartier divisor of $X$ supported on $D$ such that $\iota^*_\eta(\GLDT_g(j_!\sF))=\LDT_{X_{\eta}}(j_!\sF|_{X_{\eta}})$ (\ref{ldtdiv}).

\begin{theorem}[{\cite[Theorem 4.3]{HY}}]\label{nonlogsemicont}
There exists a Zariski open dense subscheme $V$ of $S$ such that,
\begin{itemize}
\item[(i)]
for any $s\in V$, we have $\iota_s^*(\GDT_g(j_!\sF))=\DT_{X_s}(j_!\sF|_{X_s})$;
\item[(ii)]
for any $s\in S-V$, we have $\iota_s^*(\GDT_g(j_!\sF))\geq\DT_{X_s}(j_!\sF|_{X_s})$.
\end{itemize}
\end{theorem}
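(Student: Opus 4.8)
This statement is \cite[Theorem 4.3]{HY}; I sketch the proof I have in mind, in essence the one of \emph{loc.\ cit.} The plan is to reduce it, one irreducible component of $D$ at a time, to Deligne and Laumon's lower semi-continuity property \cite[2.1.1]{lau} for Swan conductors on a relative curve.

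Fix an irreducible component $D_i$ of $D$. For a point $s$ of $S$ let $c_i(s)$ be the coefficient of $D_{i,s}$ in $\DT_{X_s}(j_!\sF|_{X_s})$ — the total dimension of $\sF$ at the generic point of $D_{i,\bar s}$ — and set $d_i=c_i(\eta)$. By \eqref{dimtotbc} the integer $c_i(s)$ depends only on the pair $(X_{\bar s},\sF|_{X_{\bar s}})$; in particular it is unchanged if $S$ is replaced by an integral subscheme through $s$, and the hypotheses of the theorem are inherited by such a base change, since a smooth morphism and a relative divisor smooth over the base remain such after base change. Granting that $s\mapsto c_i(s)$ is a constructible function, the two conclusions follow — after intersecting the finitely many dense open sets so obtained for the various $D_i$ — from the single inequality
\[
(\ast)\qquad c_i(t)\le d_i\qquad\text{for all }t\in S,
\]
since then $\{\,s:c_i(s)=d_i\,\}$ is constructible and contains $\eta$, hence contains a dense open set.

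To prove $(\ast)$ I would choose a trait $\Sigma=\Spec(\mathcal O)$ with algebraically closed fraction field and residue field, equipped with a morphism $\Sigma\to S$ sending the generic point to $\eta$ and the closed point to a point above $t$, and base-change everything to $\Sigma$ (harmless for $c_i$ by \eqref{dimtotbc}); write $\eta$ and $0$ for the generic and closed points of $\Sigma$ and $X_\eta$, $X_0$ for the corresponding fibres. By \ref{wrccss}, inside $X_0$ the locus of $D_{i,0}$ over which the ramification of $\sF$ is non-degenerate and $SS(j_!\sF|_{X_0})$ is a union of lines is the complement of a closed subset of codimension $\ge 2$; the construction of Proposition \ref{findcurve} (applied with $\beta=1$) then produces, through a general point of that locus, a smooth curve in $X_0$ meeting $D_{i,0}$ with multiplicity one and transversal to $SS(j_!\sF|_{X_0})$. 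Lifting the local equations cutting this curve out to $X$, by smoothness of $X\to\Sigma$, yields an immersed relative curve $\mathcal C\hookrightarrow X$ over $\Sigma$ which meets $D_i$ along a section $\sigma$ with multiplicity one in both fibres, with $\sF|_{\mathcal C}$ locally constant off $\sigma$, and — after adjusting the lift, using that transversality is an open condition and that over a dense open of $S$ the fibrewise singular supports are governed by $SS(j_!\sF)$ on the total space (cf. \cite{HY2}) — with $\mathcal C_\eta$ also transversal to $SS(j_!\sF|_{X_\eta})$ at $\sigma(\eta)$. Now Proposition \ref{saitomain}, applied to $\mathcal C_0\hookrightarrow X_0$ and to $\mathcal C_\eta\hookrightarrow X_\eta$, gives
\[
\dimtot_{\sigma(0)}(\sF|_{\mathcal C_0})=c_i(t),\qquad \dimtot_{\sigma(\eta)}(\sF|_{\mathcal C_\eta})=d_i,
\]
while Deligne and Laumon's theorem \cite[2.1.1]{lau}, applied to $\mathcal C\to\Sigma$ and $\sF|_{\mathcal C}$ — passing between $\sw$ and $\dimtot$ on curves via \eqref{swdimtotcurve} and the constancy of the rank — gives $\dimtot_{\sigma(0)}(\sF|_{\mathcal C_0})\le\dimtot_{\sigma(\eta)}(\sF|_{\mathcal C_\eta})$. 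Hence $c_i(t)\le d_i$. Running the same construction in a family over a dense open of $S$, together with Noetherian induction on $S$, in addition gives the constructibility of $c_i$ and the equality on a dense open used above.

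The step I expect to be the main obstacle is this curve construction: one must exhibit a \emph{single} relative curve over $\Sigma$ whose special \emph{and} generic fibres are simultaneously well positioned — transversal to the relevant singular support, meeting $D_i$ with multiplicity one, and avoiding the codimension $\ge 2$ degeneracy loci — which forces control over how $SS(j_!\sF|_{X_s})$ and the non-degeneracy locus vary in the family, precisely where semi-continuity of singular supports (equivalently, a non-characteristicity condition on $SS(j_!\sF)$ relative to $g$) enters. The remaining ingredients — the reduction to $(\ast)$, the identity $\dimtot=\sw+\rk_\Lambda$ on curves, and the appeal to \cite[2.1.1]{lau} — are formal.
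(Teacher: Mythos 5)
The paper does not prove this theorem; it is cited to \cite[Theorem 4.3]{HY}. However, the author proves the logarithmic analogue, Theorem~\ref{semithm}, \emph{following the same strategy as loc.~cit.}, so Proposition~\ref{part1closedpoint} is a reliable template for what the proof of Theorem~\ref{nonlogsemicont} actually does --- and it differs from yours at the crucial step.

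Your plan reduces, as does the paper's, to a single trait $\Sigma\to S$ and then to Deligne--Laumon on a relative curve $\mathcal C\hookrightarrow X$ over $\Sigma$. The divergence is in how the two fibres of $\mathcal C$ are matched against the divisor coefficients. You want to apply Proposition~\ref{saitomain} (an \emph{equality}) to both $\mathcal C_0\hookrightarrow X_0$ and $\mathcal C_\eta\hookrightarrow X_\eta$, which forces you to build $\mathcal C$ so that \emph{both} fibres are $SS$-transversal, meet the divisor with multiplicity one, and sit in the non-degeneracy locus. You correctly identify this as the main obstacle; it requires controlling how $SS(j_!\sF|_{X_s})$ and the non-degeneracy locus vary across $\Sigma$, and the reference you reach for, \cite{HY2}, postdates \cite{HY}, so one must be careful not to run in a circle.

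The actual proof avoids this entirely: it only needs transversality on the \emph{special} fibre. Concretely (see the structure of Proposition~\ref{part1closedpoint}): Proposition~\ref{findcurve} supplies a curve in $X_0$ transversal to $SS(j_!\sF|_{X_0})$ with $m_{x'}(\gamma^*D_{i,0})=1$; Proposition~\ref{saitomain} then gives $\dimtot_{x'}(\sF|_{\mathcal C_0})=c_i(t)$. After lifting $\mathcal C_0$ to a relative curve over $\Sigma$ and moving a section generically (no transversality assertion on $\mathcal C_\eta$!), Deligne--Laumon gives $\dimtot$ at the generic curve points $\ge\dimtot_{x'}(\sF|_{\mathcal C_0})$, and on the generic fibre one applies Proposition~\ref{hymain} --- an \emph{inequality} valid for any smooth curve meeting $D_{i,\eta}$ properly, with no $SS$-transversality hypothesis --- to bound that quantity by $d_i$ (times the contact multiplicity, which one keeps equal to~$1$). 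Chaining these yields $d_i\ge c_i(t)$. Your observation that $\mathcal C_\eta$ need only have multiplicity one along $D_{i,\eta}$ is exactly what makes the generic-fibre side work, but the step you propose --- upgrading the generic side to a \emph{saitomain} equality --- is unnecessary, and it is precisely the part you cannot execute without additional input about variation of singular supports in families.

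Two smaller remarks. First, your reduction to $(\ast)$ treats the components $D_i$ one at a time, but distinct $D_i$'s can specialize onto the \emph{same} irreducible component of $D_s$, so the coefficient of a component $Z\subseteq D_s$ in $\iota_s^*\GDT_g(j_!\sF)$ is $\sum_i m_i d_i$, not a single $d_i$; this is exactly why Proposition~\ref{part1closedpoint} works with the full set of components $D_i$ meeting at the chosen point rather than one at a time (and restricts to an open of $S$ where the $D_{i,s}$ are distinct for the dense-open part). Second, the constructibility of $s\mapsto c_i(s)$ and the passage from closed points of $S$ to arbitrary points are dispatched in the paper by Noetherian induction on the closures $\overline{\{t\}}$ (see the last paragraph of the proof of Theorem~\ref{semithm}), which does not require re-running the curve construction ``in a family'' and sidesteps the constructibility question that your sketch leaves open.
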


In the following of this section, we prove Theorem \ref{logsemicontintro}, a logarithmic ramification version of Theorem \ref{nonlogsemicont}. Essentially, it sufficient to treat case where $S$ is irreducible as follows (Theorem \ref{semithm}). When $g:X\rightarrow S$ is of relative dimension $1$, both of the two theorems are equivalent to Deligne and Laumon's semi-continuity property for Swan conductors \cite[2.1.1]{lau}.

\begin{theorem}\label{semithm}
There exists a Zariski open dense subscheme $V$ of $S$ such that,
\begin{itemize}
\item[(i)]
for any $s\in V$, we have $\iota_s^*(\GLDT_g(j_!\sF))=\LDT_{X_s}(j_!\sF|_{X_s})$;
\item[(ii)]
for any $s\in S-V$, we have $\iota_s^*(\GLDT_g(j_!\sF))\geq\LDT_{X_s}(j_!\sF|_{X_s})$.
\end{itemize}
\end{theorem}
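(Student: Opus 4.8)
The plan is to deduce Theorem \ref{semithm} from the non-logarithmic semi-continuity property Theorem \ref{nonlogsemicont} together with the comparison inequalities between pull-backs of Swan divisors established in \S 6, following the strategy of \cite[\S 4]{HY}. The key observation is that $\LDT$ and $\DT$ differ in a controlled way: on a $k$-curve they coincide by \eqref{swdimtotcurve}, while in higher dimensions $\LDT_X(j_!\sF)=\SW_X(j_!\sF)+\dim_\Lambda\sF\cdot D_{\mathrm{red}}$ and $\DT_X(j_!\sF)=\SW_X(j_!\sF)+(\text{correction})$. So the substantive content is a semi-continuity statement for Swan divisors $\SW$, which is what I will actually prove, and the $\LDT$ form follows formally since $\dim_\Lambda\sF$ is locally constant and hence constant on fibers.

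First I would reduce to $S$ irreducible (already the hypothesis here) and, shrinking $S$, assume $S$ is smooth over $k$ and that $\GLDT_g(j_!\sF)$ is defined over all of $S$ with the comparison $\iota_\eta^*(\GLDT_g(j_!\sF))=\LDT_{X_\eta}(j_!\sF|_{X_\eta})$ holding at the generic point. The heart of the argument is then: for a \emph{general} geometric point $\bar s$, the coefficient $\sw_{D_{i,\bar s}}(j_!\sF|_{X_{\bar s}})$ along the fiber divisor equals the generic value $\sw_{D_{i,\bar\eta}}(j_!\sF|_{X_{\bar\eta}})$ (part (1), the equality on a dense open $V$), while for \emph{all} $\bar s$ it is bounded above by the generic value (part (2), the inequality). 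For part (2), the upper bound, I would apply Theorem \ref{proofconj1} to a suitable morphism. Concretely, take $h:Z\to X$ where $Z$ is the fiber $X_{\bar s}$ — but $X_{\bar s}\to X$ is not quite a morphism of smooth $k$-schemes since $\bar s$ need not map to $k$; instead one works over the residue field, or uses the relative curve trick: choosing a general pencil of sections one reduces the coefficient computation to restriction to a relative $k$-curve $C\to S$ with $C_{\bar\eta}\to X_{\bar\eta}$ an immersion transversal to $SS$ as produced by Proposition \ref{findcurve}/Proposition \ref{conj2strong}, so that by Proposition \ref{conj2strong} the Swan coefficient on $X_{\bar\eta}$ is computed (up to the $\frac1\beta\rk_\Lambda\sF$ error, which vanishes in the limit $\beta\to\infty$) by the total dimension at a point of $C_{\bar\eta}$.

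Having reduced to a relative curve $h:C\to S$ with $h^*D$ an effective Cartier divisor relative to $S$, I would invoke Deligne and Laumon's semi-continuity property \cite[2.1.1]{lau} for $h_!(\sF|_{U}\circ h)$: there is a dense open $V_C\subseteq S$ over which the Swan conductor at each point of $C\times_X D$ is locally constant, and lower semi-continuity holds everywhere. Pulling this back through the comparison inequalities of Proposition \ref{conj2strong} — which sandwich $\beta\cdot\sw_D(j_!\sF)$ between $\sw_{x'}((h_\beta\circ g_\beta)^*(j_!\sF))$ and itself up to the $\rk_\Lambda\sF$ error — transfers both the generic equality and the global inequality from the curve back to $X$. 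One must choose the covering degree $\beta$ and the general pencil uniformly over a dense open of $S$, which is possible by the constructibility statements (\cite[III, 9.7.8]{EGA4} and \cite[IV, 9.7.8]{EGA4}) invoked inside Proposition \ref{findcurve}; this uniformity, i.e.\ spreading out all the transversality and non-degeneracy choices over a single dense open $V$ of $S$ simultaneously for all $i\in I$, is the step I expect to be the main obstacle, since one needs the open sets $W_\beta\subseteq D_\beta$ of Proposition \ref{conj2strong} to vary algebraically in the $S$-direction and to dominate $S$.

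Finally, intersecting the finitely many dense opens (one for each $i\in I$, plus $V_C$, plus the locus where $\GLDT_g$ is defined) yields the desired $V$, and passing $\beta\to\infty$ kills the $\frac1\beta\rk_\Lambda\sF$ terms so that the strict inequalities become the stated ones. For part (2) the argument is the same except one only uses the half of Proposition \ref{conj2strong} coming from Proposition \ref{propimm} (the inequality $\beta\cdot\sw_D(j_!\sF)\geq\sw_{x'}(\cdots)$), which holds at \emph{every} point, combined with Laumon's lower semi-continuity on the curve; converting back gives $\iota_{\bar t}^*(\GLDT_{g_\alpha}(j_!\sF|_{X_\alpha}))\geq\LDT_{X_{\bar t}}(j_!\sF|_{X_{\bar t}})$ for all $\bar t$. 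The reduction from the multi-component statement of Theorem \ref{logsemicontintro} to the irreducible-base statement Theorem \ref{semithm} is then routine: apply Theorem \ref{semithm} to each $g_\alpha:X_\alpha\to S_\alpha$ and intersect the resulting open dense subsets inside $S$.
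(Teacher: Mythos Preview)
Your overall strategy---cyclic covers to relate $\SW$ and $\DT$, reduction to relative curves, Deligne--Laumon---is indeed what the paper does, but there is a genuine gap in your plan for part (i), the equality on a dense open. You write that ``passing $\beta\to\infty$ kills the $\frac1\beta\rk_\Lambda\sF$ terms,'' but the open set you produce depends on $\beta$ (it comes from a transversality and non-degeneracy locus for the cover $X_\beta$, or from Deligne--Laumon applied to a curve chosen after fixing $\beta$), so letting $\beta\to\infty$ gives only $\bigcap_\beta V_\beta$, which need not be open or dense. The paper's fix is to invoke the Hasse--Arf theorem (Theorem~\ref{xlmain}): fix a \emph{single} $\beta>2\,\rk_\Lambda\sF$, obtain the sandwich $|\sw_{D_\eta}-\sw_{(D_i)_s}|<1$ on $V_\beta$, and conclude equality by integrality. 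With this correction your scheme works; the $\beta\to\infty$ limit is, by contrast, exactly right for part (ii), where only an inequality is needed.

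In fact the paper's argument for part (i) is simpler than the relative-curve route you sketch: it applies the \emph{non-logarithmic} semi-continuity (Theorem~\ref{nonlogsemicont}) directly to $h_\beta^*(j_!\sF)$ on the cyclic cover $X_\beta\to S$, which produces the open $V_\beta$ immediately, and then the Hasse--Arf sandwich converts the $\DT$-equality on $V_\beta$ into the desired $\SW$-equality. No spreading out of curve-transversality over $S$ is required for this half, so the obstacle you flag does not arise there. For part (ii) the paper does essentially what you outline (construct a relative curve inside a cyclic cover via a projection $X'\to\bA^{n-1}_S$, apply Deligne--Laumon, let $\beta\to\infty$), but it first proves the inequality only at \emph{closed} points $s\in S$ (Proposition~\ref{part1closedpoint}) and then bootstraps to an arbitrary $t\in S\setminus V$ by applying part (i) to the restriction of $g$ over the smooth locus of $\overline{\{t\}}$---a reduction step you do not mention.
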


\subsection{}
To prove Theorem \ref{semithm}, we may assume that $S$ is integral. By \cite[Theorem 4.1]{deJ}, we have a connected smooth $k$-scheme $S'$ and a generically finite and surjective morphism $\pi:S'\rightarrow S$. We put $X'=X\times_SS'$ and let $g':X'\rightarrow S'$ be the base change of $g:X\rightarrow S$ and $\pi':X'\rightarrow X$ the base change of $\pi:S'\rightarrow S$. By \ref{swbc}, we have
\begin{equation*}
\pi'^*(\GLDT_g(j_!\sF))=\GLDT_{g'}(\pi'^*(j_!\sF)).
\end{equation*}
Let $s'$ be a point of $S'$, $s$ its image in $S$ and $\pi'_{s'}:X'_{s'}\rightarrow X_s$ the canonical projection.
By \ref{swbc} again, we have
\begin{equation*}
\pi'^*_{s'}(\LDT_{X_s}(j_!\sF|_{X_s}))=\LDT_{X'_{s'}}(j_!\sF|_{X'_{s'}}).
\end{equation*}
Since $\pi:S'\rightarrow S$ is generically finite, for any open dense subset $U'$ of $S'$, the image $\pi(S')$ contains a open dense subset of $S$. Notice that a Cartier divisor on $X_s$ supported on $D_s$ is effective if and only if its pull-back to $X'_{s'}$ is effective. Hence, to prove Theorem \ref{semithm}, we may replace $S$ by $S'$, i.e., we may assume that $S$ is connected and smooth over $\Spec(k)$.  Let $\bar k$ be an algebraic closure of $k$, $\overline S$ an irreducible component of $S_{\bar k}=S\times_k\bar k$. By the same argument as above, we may further replacing $S$ by $\overline S$. In summary, to prove Theorem \ref{semithm}, we may assume that $k$ is algebraically closed and $S$ that is a connected and smooth $k$-scheme.

\subsection{}
In the following of this section, we assume that $k$ is algebraically closed and that $S$ is a connected and smooth $k$-scheme.

\begin{proposition}[{Proof of (ii) of Theorem \ref{semithm}}]\label{semithmpart2}
There exists a Zariski open dense subscheme $V$ of $X$ such that, for each point $s$ of $V$, we have $\iota_s^*(\GLDT_g(j_!\sF))=\LDT_{X_s}(j_!\sF|_{X_s})$.
\end{proposition}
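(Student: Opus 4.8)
The plan is to follow, step for step, the proof of the non-logarithmic semi-continuity Theorem~\ref{nonlogsemicont} given in \cite{HY}, substituting the logarithmic pull-back inequality Theorem~\ref{proofconj1} for Proposition~\ref{hymain} and Proposition~\ref{conj2strong} for Proposition~\ref{saitomain}. A preliminary reduction is that one may work with the absolute divisor: since $g$ is smooth, hence open, each irreducible component $D_i$ dominates $S$ and its generic point maps to $\eta$, so comparing the definitions in \ref{mainnotation} with the characterization of $\GLDT_g$ gives $\GLDT_g(j_!\sF)=\LDT_X(j_!\sF)=\SW_X(j_!\sF)+\dim_{\Lambda}\sF\cdot\sum_{i}D_i$. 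Hence for a point $s$ of $S$ the assertion amounts to comparing $\iota_s^*(\SW_X(j_!\sF))$ with $\SW_{X_s}(j_!\sF|_{X_s})$, using in addition that each $D_{i,s}$ is reduced (a fibre of the smooth morphism $D_i\to S$, and $k$ is algebraically closed), so that $\sum_i D_{i,s}\geq(D_s)_{\mathrm{red}}$ with equality after shrinking $S$.

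One inequality is immediate at a closed point $s$: then $X_s$ is a smooth $k$-scheme and $D_s=X_s\times_XD$ is a Cartier divisor on $X_s$, so Theorem~\ref{proofconj1} applied to the closed immersion $\iota_s\colon X_s\hookrightarrow X$ gives $\iota_s^*(\SW_X(j_!\sF))\geq\SW_{X_s}(j_!\sF|_{X_s})$, whence $\iota_s^*(\GLDT_g(j_!\sF))\geq\LDT_{X_s}(j_!\sF|_{X_s})$; the passage to arbitrary points of $S$ is the routine Noetherian argument of \cite{HY}. The substantive point is the reverse inequality on a dense open $V\subseteq S$, obtained by reducing to relative dimension $1$ as in \S6. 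Shrinking $X$ around the generic point of a component $D_i$ (and around a generic point of $D_{i,s}$) so that the ramification of $\sF$ is non-degenerate there, fix an integer $\beta$ coprime to $p$ with $\beta>2\,\rk_{\Lambda}\sF$, form the relative tame cyclic cover $h_{\beta}\colon X_{\beta}\to X$ of degree $\beta$ along $D_i$, and --- by a family version of Proposition~\ref{findcurve}, using openness of $SS(h_{\beta}^*(j_!\sF))$-transversality and a Bertini-type argument over $S$ in the spirit of \cite[III, 9.7.8]{EGA4} --- produce, over a dense open of $S$, a smooth $S$-curve $X'$ and an $S$-immersion $g'\colon X'\to X_{\beta}$ such that, for each closed point $s$, the fibre $X'_s$ meets $D_{i,\beta}$ transversally in a single point $x'_s$, is $SS(h_{\beta}^*(j_!\sF))$-transversal at $x'_s$, and $h_{\beta}\circ g'$ restricts to an immersion on fibres.

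Applying Proposition~\ref{conj2strong} fibre by fibre then sandwiches $\sw_{D_{i,s}}(j_!\sF|_{X_s})$ between $\tfrac1\beta\,\sw_{x'_s}\big((h_{\beta}\circ g')^*(j_!\sF)|_{X'_s}\big)$ and that quantity plus $\tfrac1\beta\,\rk_{\Lambda}\sF$, for every closed point $s$ in that open; Deligne and Laumon's semi-continuity for Swan conductors on relative curves \cite[2.1.1]{lau} applied to $X'\to S$ shrinks $S$ further so that $\sw_{x'_s}((h_{\beta}\circ g')^*(j_!\sF)|_{X'_s})$ is constant, equal to its value over $\eta$. Comparing the sandwich at $s$ with the same estimate over $\eta$ gives $\big|\sw_{D_{i,s}}(j_!\sF|_{X_s})-\sw_{D_i}(j_!\sF)\big|\leq\tfrac{2}{\beta}\,\rk_{\Lambda}\sF<1$ on this open, and since Swan conductors are integers (Theorem~\ref{xlmain}) they coincide there. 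Running this over the finitely many $i\in I$, intersecting the dense opens obtained, and shrinking once more so that the $D_{i,s}$ are mutually disjoint with a constant number of geometric components, produces $V$; together with the inequality of the previous paragraph this yields both statements of Theorem~\ref{semithm}.

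The main obstacle I expect is precisely this reduction to relative dimension $1$: the auxiliary curve of Proposition~\ref{findcurve}, its transversality with the singular support, and the non-degeneracy of the ramification must be arranged \emph{uniformly in $s$}, i.e. to hold along the fibres over a single dense open of $S$ rather than one fibre at a time, which is where the constructibility results of \cite{EGA4} and the bookkeeping of \cite{HY} enter. Once the relative curve family is in place, the remainder is a formal combination of Proposition~\ref{conj2strong}, Theorem~\ref{proofconj1}, the Deligne--Laumon theorem, and the integrality of Swan conductors.
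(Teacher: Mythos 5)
Your plan diverges from the paper's at the decisive step: the paper's proof of this proposition constructs no auxiliary relative curve and does not invoke Deligne--Laumon. After reducing to $D$ irreducible and forming the tame cyclic cover $h_\beta\colon X_\beta\to X$ of degree $\beta>2\rk_\Lambda\sF+1$ coprime to $p$, the paper simply applies the already-established non-logarithmic semi-continuity Theorem~\ref{nonlogsemicont} to the sheaf $h_\beta^*(j_!\sF)$ on $X_\beta$ and the composite $g_\beta\colon X_\beta\to S$. This produces a dense open $V_\beta\subseteq S$ on which the divisor $\DT_{(X_\beta)_s}(j_!\sF|_{(X_\beta)_s})$ is constant, equal to $\dimtot_{(D_\beta)_\eta}(j_!\sF|_{(X_\beta)_\eta})\cdot(D_\beta)_s$; sandwiching this via the comparison $\sw\leq\dimtot\leq\sw+\rk_\Lambda\sF$ on each side, the tame-cover identity $\sw_{D_\beta}=\beta\cdot\sw_D$, and the fibre-wise pull-back inequalities yields $\bigl|\sw_{D_s}(j_!\sF|_{X_s})-\sw_{D_\eta}(j_!\sF|_{X_\eta})\bigr|\leq\frac{1}{\beta}\rk_\Lambda\sF<1$, and integrality finishes. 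No transversal curve, no singular support in families, and no Deligne--Laumon are needed here.

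The gap in your proposal is therefore precisely the step you flag as the ``main obstacle'': the family version of Proposition~\ref{findcurve}. Ensuring that a single $S$-curve $X'\subset X_\beta$ meets $D_\beta$ transversally, is $SS\bigl(h_\beta^*(j_!\sF)|_{(X_\beta)_s}\bigr)$-transversal at the intersection, and lands where the ramification of the fibre sheaf is non-degenerate, \emph{simultaneously for all $s$ in a dense open}, requires comparing the singular support on $X_\beta$ with the singular supports of its fibres and running Bertini uniformly over $S$ --- this is the subject of the separate paper \cite{HY2}, not a routine consequence of \cite[III,9.7.8]{EGA4}. Moreover, if you do push that machinery and Deligne--Laumon through, what you obtain is in effect a re-derivation of Theorem~\ref{nonlogsemicont} for the cover $X_\beta$; the paper instead cites that theorem as a black box and thereby sidesteps the curve construction entirely. (The relative curve, the section of $\bA^{n-1}_S$, the \'etale-neighborhood bookkeeping, and Deligne--Laumon do all appear in the paper's argument, but in the proof of Proposition~\ref{part1closedpoint} --- the inequality at a fixed closed point --- rather than here.)
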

\begin{proof}
For each $i\in I$, we denote by $D_i^{\circ}$ a Zariski open dense subscheme of $D_i$ such that $D^\circ_i\bigcap D_{i'}=\emptyset$ for $i'\in I\backslash\{i\}$. By \cite[III, 9.6.1(ii)]{EGA4}, there exists an open dense subset $W$ of $S$ such that, for any $s\in W$, the injection $\coprod_{i\in I}(D^\circ_i)_s\rightarrow D_s$ is dominant. Hence, for any $s\in W$ and any $i,i'\in I$ $(i\neq i')$, $(D_i)_s$ and $(D_{i'})_s$ do not have same irreducible components. Therefore, to prove the proposition, we may assume that $D$ is irreducible.

This is a Zariski local problem in a neighborhood of the generic point of $D$. We may assume that $S$ and $X$ are affine and $D$ is defined by an element $f$ of $\Gamma(X,\sO_X)$. Let $\beta$ be a positive integer co-prime to $p$ and greater than $2\cdot\rk_{\Lambda}\sF+1$. Let
\begin{equation*}
X_{\beta}=\Spec(\sO_X[T]/(T^\beta-f))
\end{equation*}
be a cyclic cover of $X$ of degree $\beta$ tamely ramified along $D$,  $h_\beta:X_{\beta}\rightarrow X$ the canonical projection, $g_{\beta}:X_{\beta}\rightarrow S$ the composition of $h_{\beta}$ and $g$ and $D_{\beta}$ the smooth divisor $(T)=(D\times_XX_{\beta})_{\mathrm{red}}$ of $X_{\beta}$ relative to $S$. Notice that $g_{\beta}:X_{\beta}\rightarrow S$ and $g_{\beta}|_{D_{\beta}}:D_{\beta}\rightarrow S$ are smooth. For any point $s$ of $S$, we denote by $\iota_{\beta,s}:(X_{\beta})_s\rightarrow X_{\beta}$ the canonical injection and by $h_{\beta,s}:(X_{\beta})_s\rightarrow X_s$ the fiber of $h_{\beta}:X_{\beta}\rightarrow X$ at $s$.
Applying \cite[Theorem 4.3]{HY} (cf. Theorem \ref{nonlogsemicont}) to the sheaf $h^*_{\beta}(j_!\sF)$ on $X_{\beta}$ and the projection $g_{\beta}:X_{\beta}\rightarrow S$, we can find an Zariski open dense subset $V_{\beta}$ of $S$ such that, for any $s\in V_{\beta}$, we have
\begin{equation*}
\iota_{\beta,s}^*(\GDT_{g_\beta}(j_!\sF))=\DT_{(X_{\beta})_s}(j_!\sF|_{(X_{\beta})_s}).
\end{equation*}
 It is equivalent to that, for any $s\in V_{\beta}$,
\begin{equation}\label{Dirrform1}
\DT_{(X_{\beta})_s}(j_!\sF|_{(X_{\beta})_s})=\dimtot_{(D_{\beta})_\eta}(j_!\sF|_{(X_{\beta})_\eta})\cdot (D_{\beta})_s.
\end{equation}
By \eqref{swdimtotcomp} and Proposition \ref{unramtame}, for any $s\in V_{\beta}$, we have
\begin{align}\label{Dirrform2}
(\beta\cdot\sw_{D_\eta}(j_!\sF|_{X_{\eta}})+\rk_{\Lambda}\sF)\cdot (D_{\beta})_s& \geq\dimtot_{(D_{\beta})_{\eta}}(j_!\sF|_{(X_{\beta})_{\eta}})\cdot (D_{\beta})_s \\
&\geq \beta\cdot\sw_{D_\eta}(j_!\sF|_{X_{\eta}})\cdot (D_{\beta})_s,\nonumber
\end{align}
and
\begin{align}\label{Dirrform3}
h_{\beta,s}^*(\SW_{X_s}(j_!\sF|_{X_s}))+\rk_{\Lambda}\sF\cdot (D_\beta)_s&\geq\DT_{(X_{\beta})_s}(j_!\sF|_{(X_{\beta})_s})\\
&\geq h_{\beta,s}^*(\SW_{X_s}(j_!\sF|_{X_s})).\nonumber
\end{align}
By \eqref{Dirrform1}, \eqref{Dirrform2} and \eqref{Dirrform3}, for any $s\in V_{\beta}$, we have
\begin{align*}
(\beta\cdot\sw_{D_\eta}(j_!\sF|_{X_{\eta}})+\rk_{\Lambda}\sF)\cdot(D_{\beta})_s&\geq  h_{\beta,s}^*(\SW_{D_s}(j_!\sF|_{X_s})),\\
&\geq (\beta\cdot\sw_{D_\eta}(j_!\sF|_{X_{\eta}})-\rk_{\Lambda}\sF)\cdot (D_\beta)_s,
\end{align*}
i.e.,
\begin{align*}
\left(\sw_{D_\eta}(j_!\sF|_{X_{\eta}})+\frac{1}{\beta}\cdot\rk_{\Lambda}\sF\right)\cdot D_s&\geq  \SW_{D_s}(j_!\sF|_{X_s}),\\
&\geq \left(\sw_{D_\eta}(j_!\sF|_{X_{\eta}})-\frac{1}{\beta}\cdot\rk_{\Lambda}\sF\right)\cdot D_s,
\end{align*}
Observe that $\beta>2\cdot\rk_\Lambda\sF+1$ and that $\sw_{D_\eta}(j_!\sF|_{X_\eta})$ and coefficients of $\SW_{D_s}(j_!\sF|_{X_s})$ are non-negative integers (Theorem \ref{xlmain}). For any $s\in V_{\beta}$, we must have
\begin{equation}\label{sws=eta}
\sw_{D_\eta}(j_!\sF|_{X_{\eta}})\cdot D_s=\SW_{D_s}(j_!\sF|_{X_s}).
\end{equation}
Add $\rk_\Lambda\sF\cdot D_s$ on both sides of \eqref{sws=eta}, we obtain that, for any $s\in V_\beta$,
\begin{equation*}
  \iota_s^*(\GLDT_g(j_!\sF))=\LDT_{X_s}(j_!\sF|_{X_s}).
  \end{equation*}

\end{proof}

\begin{proposition}\label{part1closedpoint}
For any closed point $s$ of $S$, we have
\begin{equation}\label{clptLDT}
\iota_s^*(\GLDT_g(j_!\sF))\geq \LDT_{X_s}(j_!\sF|_{X_s}).
\end{equation}
\end{proposition}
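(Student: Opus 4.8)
The plan is to identify the generic logarithmic total dimension divisor with the logarithmic total dimension divisor of $j_!\sF$ on the total space $X$, and then to deduce the inequality from the sharp pull-back estimate for Swan divisors along the closed immersion $\iota_s\colon X_s\hookrightarrow X$ provided by Theorem \ref{proofconj1}.

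First I would show that $\GLDT_g(j_!\sF)=\LDT_X(j_!\sF)$. Since $D$ is a Cartier divisor on $X$ relative to $S$, it is flat over $S$, so every irreducible component $D_i$ of $D$ dominates $S$ and its generic point $\xi_i$ lies over the generic point $\eta$ of $S$; hence the henselization of $\sO_{X,\xi_i}$ coincides with that of $\sO_{X_\eta,\xi_i}$, and the Swan conductor and the rank of $\sF$ along $D_i$ are the same whether computed on $X$ or on $X_\eta$. It follows that $\iota_\eta^*(\LDT_X(j_!\sF))=\LDT_{X_\eta}(j_!\sF|_{X_\eta})$, so the uniqueness in the definition of $\GLDT_g$ gives $\GLDT_g(j_!\sF)=\LDT_X(j_!\sF)$, and the claim becomes $\iota_s^*(\LDT_X(j_!\sF))\geq\LDT_{X_s}(j_!\sF|_{X_s})$.

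Next I would split off the rank contribution. Writing $D_{\mathrm{red}}=\sum_i D_i$, we have $\LDT_X(j_!\sF)=\SW_X(j_!\sF)+\rk_\Lambda\sF\cdot D_{\mathrm{red}}$ and, on the smooth $k$-scheme $X_s$ (smooth because $g$ is smooth and $k$ is algebraically closed, as assumed in this part of the section), $\LDT_{X_s}(j_!\sF|_{X_s})=\SW_{X_s}(j_!\sF|_{X_s})+\rk_\Lambda\sF\cdot(D_s)_{\mathrm{red}}$. Here $\iota_s\colon X_s\to X$ is a closed immersion, $X_s\times_XD=D_s$ is a Cartier divisor on $X_s$, $\iota_s^*(j_!\sF)=j_{s!}(\sF|_{U_s})$ with $U_s=X_s\times_XU$, and $\iota_s^*D_{\mathrm{red}}=\sum_i(D_i)_s$ is effective with support $(D_s)_{\mathrm{red}}$, hence $\iota_s^*D_{\mathrm{red}}\geq(D_s)_{\mathrm{red}}$. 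Therefore it suffices to prove the sharp Swan pull-back inequality $\iota_s^*(\SW_X(j_!\sF))\geq\SW_{X_s}(j_!\sF|_{X_s})$; this is Theorem \ref{proofconj1} applied to $h=\iota_s$, and adding $\rk_\Lambda\sF\cdot\iota_s^*D_{\mathrm{red}}\geq\rk_\Lambda\sF\cdot(D_s)_{\mathrm{red}}$ to both sides gives \eqref{clptLDT}.

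The point requiring care — and the step I expect to be the main obstacle — is the applicability of Theorem \ref{proofconj1}: in \S6 that theorem is proved under the standing hypothesis that $D$ is a simple normal crossing divisor on $X$, whereas here $D$ is only a Cartier divisor relative to $S$ with components smooth over $S$, so its components may be mutually tangent and the special fibre $D_s$ may be non-reduced with coinciding components of distinct $D_i$. The inequality $\iota_s^*\SW_X(j_!\sF)\geq\SW_{X_s}(j_!\sF|_{X_s})$ is Zariski-local on $X_s$ around the generic points of the components of $D_s$, and one brings it into the range of \S6 by a d\'evissage on tame cyclic covers $h_\beta\colon X_\beta\to X$ ramified along a single smooth divisor (so that the auxiliary covers of \S6 remain smooth), invoking Theorem \ref{nonlogsemicont} and Proposition \ref{hymain} on $X_\beta$ as in the proof of Proposition \ref{semithmpart2} and letting $\beta\to\infty$. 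One must be careful to preserve sharpness here, since the estimate coming from the total-dimension statements alone loses a term $\rk_\Lambda\sF$ — which is precisely why the Swan versions (Theorem \ref{proofconj1}, and Proposition \ref{propimm} for curves) are needed; arranging this reduction is the real work.
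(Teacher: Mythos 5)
There is a genuine gap at the very first step, and it is fatal. You assert that $\GLDT_g(j_!\sF)=\LDT_X(j_!\sF)$ on the grounds that the henselization of $\sO_{X,\xi_i}$ equals that of $\sO_{X_\eta,\xi_i}$. The local rings do agree, but the paper's Swan conductor in \eqref{ldtdiv} is computed after passing to an algebraic closure of the base field of the ambient scheme, and these are \emph{different} algebraic closures for $X$ (base $k=\bar k$) and for $X_\eta$ (base $k(\eta)$, alg.\ closure $\overline{k(\eta)}$). Since $\overline{k(\eta)}/k(\eta)$ has inseparable parts, the induced residue field extension at $\xi_i$ is inseparable, the map on $\Omega^1_F(\log)$ has a kernel, and the Abbes--Saito Swan conductor can genuinely drop. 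Concretely, take $S=\bA^1_k$ with coordinate $t$, $X=\bA^2_S$ with coordinates $(x,y)$, $D=\{x=0\}$, and $\sF$ the rank~$1$ Artin--Schreier sheaf $z^p-z=t/x^p$; then $\sw_D(j_!\sF)=p$ (over $\bar k$, $dt\neq 0$), but on $X_\eta\times_{k(\eta)}\overline{k(\eta)}$ one has $t^{1/p}\in\overline{k(\eta)}$, and after the substitution $w=z-t^{1/p}/x$ the generic Swan conductor is~$1$. So $\LDT_X(j_!\sF)=(p+1)D$ whereas $\GLDT_g(j_!\sF)=2D$. These divisors differ, and the inequality goes the wrong way: one expects $\GLDT_g\leq\LDT_X$, so proving $\iota_s^*(\LDT_X(j_!\sF))\geq\LDT_{X_s}(j_!\sF|_{X_s})$ is strictly weaker than the claim and does not yield \eqref{clptLDT}. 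Theorem~\ref{semithm} is sharp precisely because it bounds by the \emph{generic} divisor, not by $\LDT_X$.

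Even setting this aside, the second step rests on applying Theorem~\ref{proofconj1} to $\iota_s$, but that theorem's standing hypothesis in \S6 is that $D$ is a simple normal crossing divisor on $X$, which is not assumed here (the components $D_i$ are only smooth over $S$ and may be mutually tangent, with $(D_i)_s$ coinciding for several $i$ — the exact case the paper singles out as hard). You flag this but propose only a vague d\'evissage via tame covers, invoking Theorem~\ref{nonlogsemicont} and Proposition~\ref{hymain} ``as in Proposition~\ref{semithmpart2}''; that argument proves part~(i) of Theorem~\ref{semithm} (generic equality), not the pointwise inequality~(ii), and gives no mechanism to compare a special fibre with the generic fibre. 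The paper's proof of Proposition~\ref{part1closedpoint} is of a different nature: after a tame cyclic cover it selects a transversal curve in $X'_s$ via Proposition~\ref{findcurve}, fattens it into a relative-curve fibration $Y^0_V\to V$ over an \'etale neighbourhood of~$s$, applies Deligne--Laumon's semi-continuity \cite[2.1.1]{lau} to that relative curve to bound the special fibre by the geometric generic fibre, and then transfers both sides back to Swan divisors on $X_\eta$ and $X_s$ using Proposition~\ref{hymain}, Proposition~\ref{conj2strong}, \eqref{swdimtotcomp} and Proposition~\ref{unramtame}, finally letting $\beta\to\infty$. That cross-fibre comparison via a relative curve is the essential content and cannot be replaced by a pull-back estimate like Theorem~\ref{proofconj1}, which only compares a sheaf on $X$ with its restriction to a subvariety of $X$.
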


\begin{proof}
We fix a closed point $s$ of $S$. This is a local problem for the Zariski topology of $X$. We may assume that $S$ and $X$ are integral and affine such that $D_s\neq \emptyset$. We put $n=\dim_k X-\dim_kS$. When $n=1$, the map $f:X\rightarrow S$ is a relative curve, the inequality \eqref{clptLDT} is due to \cite[2.1.1]{lau}. We focus on the case where $n\geq 2$.  We may assume that $D_s$ is irreducible and that $(D_s)_{\mathrm{red}}=(D_i)_s$ for each $i\in I$.
Let $\beta$ be an integer co-prime to $p$, $f_1$ an element of $\Gamma(X,\sO_X)$ that defines $D_1$ and
\begin{equation*}
X'=\Spec(\sO_X[T]/(X^\beta-f_1))
\end{equation*}
a cyclic cover of $X$ of degree $\beta$ tamely ramified along $D_1$, $h':X'\rightarrow X$ the canonical projection, $g':X'\rightarrow S$ the composition of $h'$ and $g$, $D'$ the reduced Cartier divisor $(D\times_XX')_{\mathrm{red}}$ on $X$, $D'_1$ the smooth divisor $(T)=(D_1\times_XX')_{\mathrm{red}}$ on $X'$ and $D'_i$ the reduced Cartier divisor $D_i\times_XX'$ on $X'$ for each $i\in I\backslash \{1\}$. Observe that $D'=\sum_{i\in I}D'_i$ and that $\beta\cdot(D'_1)_s=(D'_i)_s$ for each $i\in I\backslash\{1\}$.

Applying Proposition \ref{findcurve} to the sheaf $h'^*(j_!\sF)|_{X'_s}$ on $X'_s$, we can find a closed point $x'$ of $(D'_1)_s$ with image $x$ in $(D_1)_s$, a smooth $k$-curve $C$ and an immersion $\gamma:C\rightarrow X'_s$ such that
\begin{itemize}
\item[(i)] $C\bigcap (D'_1)_s=x'$ and $m_{x'}(\gamma^*(D'_1)_s)=1$;
\item[(ii)] $\gamma:C\rightarrow X'_s$ is $SS(h'^*(j_!\sF)|_{X'_s})$-transversal at $x'$;
\item[(iii)] the composition $h'_s\circ \gamma:C\rightarrow X_s$ is also an immersion, where $h'_s:X'_s\rightarrow X_s$ denotes the fiber of $h':X'\rightarrow X$.
\end{itemize}
By Proposition \ref{conj2strong}, we see that
\begin{equation}\label{useth6.4}
\dimtot_{x'}(j_!\sF|_C)\geq \beta\cdot\sw_{(D_1)_s}(j_!\sF|_{X_s}).
\end{equation}

Choose a regular system of parameters $\bar t_1,\cdots, \bar t_n$ of $\sO_{X'_s,x'}$ such that the ideal $(\bar t_1,\cdots,\bar t_{n-1})$ defines $\sO_{C,x'}$ and that $(\bar t_n)$ defines $\sO_{(D'_1)_s,x'}$.
For any $1\leq i\leq n-1$, choose a lifting $t_i\in \sO_{X',x'}$ of $\bar t_i\in \sO_{X'_s,x'}$.
After replacing $X$ by an open neighborhood of $x$, the $\sO_{S,s}$-homomorphism
\begin{equation*}
\sO_{S,s}[T_1,\cdots, T_{n-1}]\rightarrow\sO_{X',x'}, \ \ \ T_i\mapsto t_i,
\end{equation*}
induces an $S$-morphism $r':X'\rightarrow \bA^{n-1}_S$. It satisfies following conditions after shrinking $X$ again
\begin{itemize}
\item[(1)] it is smooth and of relative dimension $1$;
\item[(2)] $r'|_{D'}:D'\rightarrow \mathbb A^{n-1}_S$ is quasi-finite and flat, for each $i\in I\backslash \{1\}$, the restriction $r'|_{D'_i}:D'_i\rightarrow \mathbb A^{n-1}_S$ is quasi-finite and flat and  $r'|_{D'_1}:D'_1\rightarrow \mathbb A^{n-1}_S$ is \'etale (\cite[I, chapitre 0,15.1.16]{EGA4});
\item[(3)] the curve $C$ is the pre-image $r'^{-1}(s\times o)$, where $s\times o$ denotes the product of $s\in S$ and the origin $o\in\bA^{n-1}_k$.
\end{itemize}
We denote by $Z$ the scheme theoretic image of $\bigcup_{i,i'\in I, i\neq i'}(D'_i\bigcap D'_{i'})\subseteq D'$ in $\bA^{n-1}_S$. By (2), it is of codimension $1$ in $\bA^{n-1}_S$. Choose a section $\sigma:S\rightarrow \bA^{n-1}_S$ of the canonical projection $\pi:\bA^{n-1}_S\rightarrow S$ such that $\sigma(s)=s\times o$ and that $\sigma(S)\not\subseteq Z$. We denote by $q: Y\rightarrow S$ the base change of $r':X'\rightarrow\bA^{n-1}_S$ by $\sigma:S\rightarrow \bA^{n-1}_S$. We put $E=Y\times_{X'}D'$ and, for each $i\in I$, we put $E_i=Y\times_{X'}D_i'$. By \cite[IV,18.12.1]{EGA4}, we can find a connected \'etale neighborhood $\theta:V\rightarrow S$ of $s\in S$  such that,
\begin{itemize}
\item[(a)] the pre-image $v=\theta^{-1}(s)$ is a point;
\item[(b)] the fiber product $E_V=E\times_YV$ is a disjoint union of two schemes $Q$ and $P$;
\item[(c)] $z=(Q_v)_{\mathrm{red}}$ is a point and it is the unique pre-image of $x'\in E_s\subseteq Y_s\subseteq X'$.
\item[(d)] $Q$ is finite and flat over $V$, for any $i\in I\backslash\{1\}$, $Q_i=E_i\times_EQ$ is finite and flat over $V$ and $Q_1=E_1\times_EQ$ is isomorphic to $V$.
\end{itemize}
We have the following commutative diagram
\begin{equation}\label{bigdiag}
\xymatrix{\relax
Q\ar@{}|-(0.5){\Box}[rd]\ar[r]\ar[d]& E_V\ar[d]\ar[r]\ar@{}|-(0.5){\Box}[rd]& E\ar[d]\ar[r]\ar@{}|-(0.5){\Box}[rd]& D'\ar[d]\ar[r]&D\ar[d]\\
    Y^0_{V} \ar[r]    &Y_V \ar[d]_-(0.5){q_V}\ar[r]^-(0.5){\theta_V}\ar@{}|-(0.5){\Box}[rd]&Y \ar[d]^q\ar[r]^-(0.5){\sigma'}\ar@{}|-(0.5){\Box}[rd]& X'\ar[d]^-(0.5){r'}\ar[r]^-(0.5){h'}& X\ar[d]^-(0.5){g}\\
          &V \ar[r]_-(0.5){\theta}& S\ar[r]\ar[r]_-(0.5){\sigma}&\bA^{n-1}_S\ar[r]\ar[r]_-(0.5){\pi}& S}
\end{equation}
where $Y^0_V$ denotes the complement of $P$ in $Y_V$.
Let $\eta$ be the generic point of $S$ and $\bar\eta$ an algebraic geometric point above $\eta$ that factors through $V$. Recall that $X_{\bar\eta}=X\times_S\bar\eta$, that $D_{\bar\eta}=D\times_S\bar\eta$, that $(D_i)_{\bar\eta}=D_i\times_S\bar\eta$, that $X'_{\bar\eta}=X'\times_S\bar\eta$, that $D'_{\bar\eta}=D'\times_S\bar\eta$ and that $(D'_i)_{\bar\eta}=D'_i\times_S\bar\eta$. We put $Y_{\bar\eta}=Y\times_S\bar\eta$, $Y^0_{\bar\eta}=Y^0_V\times_V\bar\eta$, $E_{\bar\eta}=E\times_S\bar\eta$, $(E_i)_{\bar\eta}=E_i\times_S\bar\eta$,  $Q_{\bar\eta}=Q\times_S\bar\eta$ and $(Q_i)_{\bar\eta}=Q_i\times_S\bar\eta$. Since $\sigma(\eta)\not\in Z\subseteq \bA^{n-1}_S$, for any $i, i'\in I$ ($i\neq i'$), we have
$(E_i)_{\bar\eta}\bigcap (E_{i'})_{\bar\eta}=\emptyset$, and hence, $(Q_i)_{\bar\eta}\bigcap (Q_{i'})_{\bar\eta}=\emptyset$.
Applying Deligne and Laumon's semi-continuity property of Swan conductors \cite[2.1.1]{lau} to the sheaf $j_!\sF|_{Y_V^0}$ on the relative curve $q_V:Y_V^0\rightarrow V$, we get
\begin{equation}\label{dellauuse}
\sum_{i\in I}\sum_{y\in(Q_i)_{\bar\eta}}\dimtot_y(j_!\sF|_{Y^0_{\bar\eta}})\geq \dimtot_{x'}(j_!\sF|_C).
\end{equation}
Take the geometric generic fiber of \eqref{bigdiag}, we have the following commutative diagram
\begin{equation*}
\xymatrix{\relax
Q_{\bar\eta}\ar@{}[rd]|-(0.5){\Box}\ar[d]\ar[r]&E_{\bar\eta}\ar@{}[rd]|-(0.5){\Box}\ar[r]\ar[d]&D'_{\bar\eta}\ar[r]\ar[d]&D_{\bar\eta}\ar[d]\\
Y^0_{\bar\eta}\ar[r]&Y_{\bar\eta}\ar@{}[rd]|-(0.5){\Box}\ar[r]^-(0.5){\sigma'_{\bar\eta}}\ar[d]_-(0.5){q_{\bar\eta}}&X'_{\bar\eta}\ar[r]^-(0.5){h'_{\bar\eta}}\ar[d]^-(0.5){r'_{\bar\eta}}&X_{\bar\eta}\ar[d]^-(0.5){g_{\bar\eta}}\\
&\bar\eta\ar[r]_-(0.5){\sigma_{\bar\eta}}&\bA^{n-1}_{\bar\eta}\ar[r]_-(0.5){\pi_{\bar\eta}}&\bar\eta}
\end{equation*}
Notice that $ D'_{\bar\eta}=(D_{\bar\eta}\times_{\bar\eta}X'_{\bar\eta})_{\mathrm{red}}$, that $ (D'_1)_{\bar\eta}=((D_1)_{\bar\eta}\times_{\bar\eta}X'_{\bar\eta})_{\mathrm{red}}$ and that, for any $i\in I\backslash\{1\}$, $ (D'_i)_{\bar\eta}=(D_i)_{\bar\eta}\times_{\bar\eta}X'_{\bar\eta}$. Hence, $(Q_1)_{\bar\eta}=((D_1)_{\bar\eta}\times_{X'_{\bar\eta}}Y^0_{\bar\eta})_{\mathrm{red}}$ and that $(Q_i)_{\bar\eta}=(D_i)_{\bar\eta}\times_{X_{\bar\eta}}Y^0_{\bar\eta}$ for any $i\in I\backslash\{1\}$. By (d), the fiber $(Q_1)_{\bar\eta}$ is a closed point of $Y^0_{\bar\eta}$ and, for each $i\in I\backslash\{1\}$,
\begin{equation*}
\mathrm{length}_{\bar\eta}((Q_i)_{\bar\eta})=\mathrm{length}_{k}((Q_i)_v)=m_{x'}(\gamma^*(D'_i)_s)=\beta.
\end{equation*}
Applying \cite[Theorem 4.2]{HY} (cf. Theorem \ref{hymain}) to the sheaf $j_!\sF|_{X'_{\bar\eta}}$ and the injection $\sigma'_{\bar\eta}:Y^0_{\bar\eta}\rightarrow X'_{\bar\eta}$ along the divisor $(D'_1)_{\bar\eta}$, we have
\begin{equation}\label{hyuseX'}
\dimtot_{(D'_1)_{\eta}}(j_!\sF|_{X'_\eta})\geq \dimtot_{(Q_1)_{\bar\eta}}(j_!\sF|_{Y^0_{\bar\eta}}).
\end{equation}
For any $i\in I\backslash\{1\}$, applying \cite[Theorem 4.2]{HY} (cf. Theorem \ref{hymain}) to the sheaf $j_!\sF|_{X_{\bar\eta}}$ and the quasi-finite morphism $\sigma'_{\bar\eta}:Y^0_{\bar\eta}\rightarrow X_{\bar\eta}$ along the divisor $(D_i)_{\bar\eta}$, we have,

\begin{equation}\label{hyuseX}
\beta\cdot\dimtot_{(D_i)_{\eta}}(j_!\sF|_{X_\eta})\geq\sum_{y\in (Q_i)_{\bar\eta}}\dimtot_y(j_!\sF|_{Y^0_{\bar\eta}}).
\end{equation}
By \eqref{dellauuse}, \eqref{hyuseX'} and \eqref{hyuseX} , we get
\begin{equation}\label{keyineqdt}
\dimtot_{(D'_1)_{\eta}}(j_!\sF|_{X'_\eta})+\sum_{i\in I\backslash\{1\}}\beta\cdot\dimtot_{(D_i)_{\eta}}(j_!\sF|_{X_\eta})\geq\dimtot_z(j_!\sF|_C).
\end{equation}
Since $h'_\eta:X'_{\eta}\rightarrow X_{\eta}$ is tamely ramified along $(D_1)_\eta$, by \eqref{swdimtotcomp} and Proposition \ref{unramtame}, we have
\begin{equation}\label{useswdttame}
\beta\cdot\sw_{(D_1)_{\eta}}(j_!\sF|_{X_{\bar\eta}})+\rk_{\Lambda}\sF=\sw_{(D'_1)_{\eta}}(j_!\sF|_{X'_{\bar\eta}})+\rk_{\Lambda}\sF\geq \dimtot_{(D'_1)_{\eta}}(j_!\sF|_{X'_\eta}).
\end{equation}
For any $i\in I\backslash\{1\}$, we have \eqref{swdimtotcomp}
\begin{equation}\label{useswdt}
\sw_{(D_i)_{\eta}}(j_!\sF|_{X_\eta})+\rk_\Lambda\sF\geq\dimtot_{(D_i)_{\eta}}(j_!\sF|_{X_\eta}).
\end{equation}
By \eqref{useth6.4}, \eqref{keyineqdt}, \eqref{useswdttame} and \eqref{useswdt}, we have
\begin{align}
\beta\cdot\sw_{(D_1)_{\eta}}(j_!\sF|_{X_{\bar\eta}})+\sum_{i\in I\backslash\{1\}}\beta\cdot\sw_{(D_i)_{\eta}}(j_!\sF|_{X_\eta})&+((\sharp I-1)\beta+1)\cdot\rk_\Lambda\sF \label{keyineqsw}\\
&\geq\beta\cdot\sw_{(D_1)_s}(j_!\sF|_{X_s}).\nonumber
\end{align}
Divide both sides of \eqref{keyineqsw} by $\beta$, and then pass $\beta\rightarrow\infty$. We obtain
\begin{align}
\sum_{i\in I}&(\sw_{(D_i)_{\eta}}(j_!\sF|_{X_\eta})+\rk_\Lambda\sF)\geq\sw_{(D_1)_s}(j_!\sF|_{X_s})+\rk_\Lambda\sF,
\end{align}
which proves \eqref{clptLDT}.
\end{proof}

\subsection{}{\it Proof of Theorem} \ref{semithm}.
Since (ii) of Theorem \ref{semithm} is proved in Proposition \ref{semithmpart2}, we can find a Zariski open dense subset $V$ of $S$ such that, for any $s\in V$,
\begin{equation*}
\iota_s^*(\GLDT_g(j_!\sF))=\LDT_{X_s}(j_!\sF|_{X_s}).
\end{equation*}
Let $t$ be a point of $S-V$. If $t$ is a closed point, we have (Proposition \ref{part1closedpoint})
\begin{equation*}
\iota_t^*(\GLDT_g(j_!\sF))\geq\LDT_{X_t}(j_!\sF|_{X_t}).
\end{equation*}
If $t$ is not a closed point of $S-V$, let  $T$ be the smooth part of $\overline{\{t\}}$, which is an open dense subset of $\overline{\{t\}}$.
We have the following commutative diagram
\begin{equation*}
\xymatrix{\relax
X_s\ar[r]^-(0.5){\rho_s}\ar[rd]_{\iota_s}&X_T\ar@{}[rd]|-(0.5){\Box}\ar[d]_-(0.5){\iota_T}\ar[r]^-(0.5){g_T}&T\ar[d]\\
&X\ar[r]_-(0.5)g&S}
\end{equation*}
where $s$ denotes a point of $T$ and $\rho_s:X_s\rightarrow X_T$ is the canonical injection. Applying Proposition \ref{semithmpart2} to  $j_!\sF|_{X_T}$ and $g_T:X_T\rightarrow T$, we can find a Zariski open dense subset $W\subseteq T$ such that, for any closed point $s$ of $W$, we have
\begin{equation}\label{apply8.6}
\rho^*_s(\GLDT_{g_T}(j_!\sF|_{X_T}))=\LDT_{X_s}(j_!\sF|_{X_s}).
\end{equation}
By Proposition \ref{part1closedpoint}, we have
\begin{equation}\label{apply8.5}
\rho_s^*(\iota^*_T(\GLDT_g(j_!\sF)))\geq\LDT_{X_s}(j_!\sF|_{X_s}).
\end{equation}
By \eqref{apply8.6} and \eqref{apply8.5}, we obtain that, for any closed point $s\in W$,
\begin{equation*}
\rho_s^*(\iota^*_T(\GLDT_g(j_!\sF)))\geq \rho_s^*(\GLDT_{g_T}(j_!\sF|_{X_T})).
\end{equation*}
It implies that
\begin{equation}\label{ineqoverT}
\iota^*_T(\GLDT_g(j_!\sF))\geq \GLDT_{g_T}(j_!\sF|_{X_T}).
\end{equation}
Applying $\rho_t^*$ on both sides of \eqref{ineqoverT}, we obtain
\begin{equation*}
\iota_t^*(\GLDT_g(j_!\sF))\geq\LDT_{X_t}(j_!\sF|_{X_t}).
\end{equation*}
Hence, (i) of Theorem \ref{semithm} is proved. \hfill$\Box$

\end{document}